\newtheorem{theorem}{Theorem}
\newtheorem{corollary}[theorem]{Corollary}
\newtheorem{lemma}[theorem]{Lemma}
\newtheorem{proposition}[theorem]{Proposition}
\theoremstyle{definition}
\theoremstyle{remark}
\newtheorem{remark}[theorem]{Remark}
\newcommand{\RR}{\mathbb{R}}
\newcommand{\ZZ}{\mathbb{Z}}
\newcommand{\norm}[1]{\left\Vert#1\right\Vert}
\newcommand{\abs}[1]{\left\vert#1\right\vert}
\newcommand{\op}[1]{\mathbf{op}\left(#1\right)}
\DeclareMathOperator{\sgn}{sign} %
\DeclareMathOperator{\ad}{ad} %
\DeclareMathOperator{\Ad}{Ad} %
\DeclareMathOperator{\curl}{curl} %
\DeclareMathOperator{\diver}{div} %
\newcommand{\id}{\mathrm{id}}
\newcommand{\Rot}{\mathrm{Rot}}
\newcommand{\Diff}{\mathrm{Diff}}
\newcommand{\D}[1]{\mathcal{D}^{#1}(S^{1})}
\newcommand{\CS}{\mathrm{C}^{\infty}(\mathbb{S}^{1})}
\newcommand{\Vect}{\mathrm{Vect}}
\newcommand{\HH}[1]{H^{#1}(S^{1})}
\newcommand{\llangle}{\langle\!\langle}
\newcommand{\rrangle}{\rangle\!\rangle}
\begin{document}

\title{Geometric investigations of a vorticity model equation}%

\author[M. Bauer]{Martin Bauer}
\address{Faculty for Mathematics, University of Vienna, Austria}
\email{bauer.martin@univie.ac.at}

\author[B. Kolev]{Boris Kolev}
\address{Aix Marseille Universit\'{e}, CNRS, Centrale Marseille, I2M, UMR 7373, 13453 Marseille, France}
\email{boris.kolev@math.cnrs.fr}

\author[S.C. Preston]{Stephen C. Preston}
\address{Department of Mathematics, University of Colorado, Boulder, CO 80309-0395, USA}
\email{stephen.preston@colorado.edu}

\thanks{All authors were partially supported by the Erwin Schr\"{o}dinger Institute programme: Infinite-Dimensional Riemannian Geometry with Applications to Image Matching and Shape Analysis. M. Bauer was supported by the European Research Council (ERC), within the project 306445 (Isoperimetric Inequalities and Integral Geometry) and by the FWF-project P24625 (Geometry of Shape spaces). S.C. Preston was partially supported by Simons Collaboration Grant no. 318969.}%
\subjclass[2010]{35Q35, 53D25}%
\keywords{Generalized CLM equation, Fredholmness of the Riemannian exponential mapping, Sobolev metrics of fractional order.}%

\date{\today}%
\begin{abstract}
  This article consists of a detailed geometric study of the one-dimensional
  vorticity model equation
  \begin{equation}
    \omega_{t} + u\omega_{x} + 2\omega u_{x} = 0, \qquad \omega = H u_{x}, \qquad t\in\mathbb R,\; x\in S^{1}\,,
  \end{equation}
  which is a particular case of the generalized Constantin-Lax-Majda equation. Wunsch showed that this equation is the Euler-Arnold equation on $\operatorname{Diff}(S^{1})$ when the latter is endowed with the right-invariant homogeneous $\dot{H}^{1/2}$--metric. In this article we prove that the exponential map of this Riemannian metric is not Fredholm and that the sectional curvature is locally unbounded. Furthermore, we prove a Beale-Kato-Majda-type blow-up criterion, which we then use to demonstrate a link to our non-Fredholmness result. Finally, we extend a blow-up result of Castro-C\'{o}rdoba to the periodic case and to a much wider class of initial conditions, using a new generalization of an inequality for Hilbert transforms due to C\'{o}rdoba-C\'{o}rdoba.
\end{abstract}

\maketitle

\setcounter{tocdepth}{2}
\tableofcontents

\section*{Introduction}

In this paper we study geometric properties of the equation
\begin{equation}\label{main}
  \omega_{t} + u\omega_{x} + 2\omega u_{x} = 0, \qquad \omega = H u_{x}, \qquad t\in\RR,\; x\in S^{1}
\end{equation}
where $H$ is the Hilbert transform given by the integral transform
\begin{equation}\label{hilbertintegral}
  (H u)(x) = \frac{1}{2\pi} \, p.v. \int_{0}^{2\pi} u(y)\cot\left(\frac{x - y}{2}\right)\,dy
\end{equation}
or in terms of the Fourier transform as
\begin{equation}\label{hilbertfourier}
  (H u)(x) = -i\sum_{n\in\ZZ} \sgn(n) \hat{u}_{n}\, e^{in x},
\end{equation}
where $\hat{u}_{n}$ are the Fourier coefficients of $u$. Equation \eqref{main} is a particular case of the generalized Constantin-Lax-Majda equation introduced in~\cite{OSW2008} (corresponding to the parameter $a=-\frac{1}{2}$ in the notation of that paper). The original Constantin-Lax-Majda equation~\cite{CLM1985} is considered the simplest model of the blowup mechanism for the three-dimensional equations of ideal fluid mechanics, and related models are still being studied in this context~\cite{CHK+2014}. Wunsch~\cite{Wun2010} showed that equation \eqref{main} can be recast as the Euler-Arnold equation on $\Diff(S^{1})$ endowed with a right-invariant degenerate metric. For this reason we will refer in the remainder of the article to equation~\eqref{main} as the Wunsch equation.

To obtain the geometric interpretation of \eqref{main}, Wunsch equipped the space $\Diff(S^{1})$ with the right-invariant Riemannian metric given at the identity by
\begin{equation}\label{Hhalfdot}
  \llangle u, u\rrangle_{\dot{H}^{1/2}} = \int_{S^{1}} u H u_{x} \, dx = 2\pi \sum_{n\in \mathbb{Z}} \abs{n}\abs{\hat{u}_{n}}^{2} \quad \text{if} \quad u(x) = \sum_{n\in\mathbb{Z}} \hat{u}_{n} e^{inx}.
\end{equation}
Here the metric is both \emph{weak} in that it does not generate a topology in which $\Diff(S^{1})$ is a manifold, and \emph{degenerate} since constant vector fields have zero length in this metric. The latter can be avoided if we work on $\Diff(S^{1})/\Rot(S^{1})$, corresponding to the homogeneous space of diffeomorphisms modulo rotations. The local well-posedness of the geodesic equation induced by the homogeneous metric $\dot{H}^{1/2}$ on $\Diff(S^{1})/\Rot(S^{1})$ has been shown in~\cite{EKW2012}. Alternatively we can consider the group of diffeomorphisms fixing a single point.

The space $\Diff(S^1)/\Rot(S^1)$ with the $\dot{H}^{1/2}$-metric \eqref{Hhalfdot} has been studied geometrically by many authors, due to its appearance as a model for string theory~\cite{BR1987} and due to its structure as an infinite-dimensional K\"ahler manifold~\cite{KY1987}. It may be viewed as the space of densities on the circle, or as a coadjoint orbit of the Virasoro group, or as the space of univalent holomorphic functions in the disc, or as the space of closed Jordan curves in the complex plane; each of these pictures gives different geometric information. It is related to the space $\Diff(S^1)/PSL_2(\mathbb{R})$ with the $\dot{H}^{3/2}$-metric which arises in Teichm\"uller theory~\cite{NJ1990, GBR2015}. Formulas for sectional curvatures of the $\dot{H}^{1/2}$ and $\dot{H}^{3/2}$-metrics were computed using various methods in~\cite{BR1987, KY1987, Zum1988, GL2006}. See the book of Sergeev~\cite{Ser2010} for a recent survey of these and related topics. Our emphasis here is different since we will be
concerned primarily with the geodesic equation, which has mostly not been studied.

A nondegenerate metric on $\Diff(S^{1})$ with similar properties is denoted by $\mu H^{1/2}$ and given at the identity by
\begin{equation}\label{muHhalf}
  \llangle u, u\rrangle_{\mu H^{1/2}} = \int_{S^{1}} \left(\mu(u) + H u_{x}\right)u \, dx ,
\end{equation}
where
\begin{equation*}
  \mu(u) := \frac{1}{2\pi}\int_{S^{1}} u \, dx .
\end{equation*}
The Euler-Arnold equation is almost the same as \eqref{main}:
\begin{equation}\label{mainprime}
  \omega_{t} + u\omega_{x} + 2\omega u_{x} = 0, \qquad \omega = \mu(u) + H u_{x}, \qquad t\in\RR,\; x\in S^{1};
\end{equation}
only the definition of the momentum $\omega$ changes slightly. Even more, zero mean solutions of the above equation
are also solutions to the Wunsch equation \eqref{main}, as we show in Lemma~\ref{meanzerolemma}.
As an analogue of the $\mu$--Hunter--Saxton equation~\cite{KLM2008}, we will also refer to this equation as $\mu$--mCLM or
$\mu$--Wunsch equation. We may view the additional term in the $\mu H^{1/2}$-metric as analogous to the projection onto the
space of harmonic (constant) vector fields in the Euler equation on $\mathbb{T}^{2}$ or $\mathbb{T}^{3}$; the fact that mean-zero fields are preserved under the flow is analogous to the preservation of the average velocity of an ideal fluid~\cite{HTV2002}.

We will show that the $\mu H^{1/2}$--metric satisfies the conditions derived in~\cite{EK2014}.
Thus similarly as for the $\dot{H}^{1/2}$--metric the smoothness of the metric and the spray on the Sobolev completions follow. The main advantage in this situation is that we can work on the full diffeomorphism group $\Diff(S^{1})$ rather than on a subgroup or a homogeneous space. In addition we note for future reference that a very simple explicit solution of \eqref{mainprime} is $u(t,x) \equiv 1$ (so that $\omega(t,x)\equiv 1$), corresponding to unit-speed rotation of the circle. Note that there is no corresponding solution in the $\dot{H}^{1/2}$ case, and in fact it is unclear if there are any time-independent solutions of \eqref{main} at all for that case.

Another related metric is the full $H^{1/2}$-metric
\begin{equation}\label{Hhalf}
  \llangle u, u\rrangle_{{H}^{1/2}} = \int_{S^{1}} \left(u + H u_{x}\right)u \, dx\,,
\end{equation}
with corresponding Euler-Arnold equation
\begin{equation}\label{mainprimeprime}
  \omega_{t} + u\omega_{x} + 2\omega u_{x} = 0, \qquad \omega = u + H u_{x}, \qquad t\in\RR,\; x\in S^{1};
\end{equation}
Although we will focus our attention on the $\mu H^{1/2}$-metric, most of the results in this article continue to hold for
the full $H^{1/2}$-metric and for the degenerate $\dot{H}^{1/2}$-metric. Throughout the paper we will comment on the necessary
changes to deal with these related situations. The geometric picture for the $\dot{H}^{1/2}$-metric, which is slightly different from the other two cases, is separately discussed in Appendix~\ref{appendix:homogeneous-metric}.

Equations \eqref{main}, \eqref{mainprime}, and \eqref{mainprimeprime} are interesting geometrically since the Sobolev index $s=\frac{1}{2}$ of the corresponding metrics is critical for several important properties. For example, the Riemannian exponential map (which takes an initial velocity $u_{0}$ to $\eta(1)\in \Diff(S^{1})$, where $\eta$ is the flow defined by $\eta_{t} = u\circ\eta$ and $\eta(0)=\id$) is $C^{\infty}$ in any Sobolev completion $\Diff^{q}(S^{1})$ for $q > 3/2$ and also in the smooth category, and this only happens for the weak $H^{s}$-metric if $s\ge \tfrac{1}{2}$. On the other hand,~\cite{BBHM2013} proves that the geodesic distance (the infimum of lengths of smooth curves) vanishes in this metric, and this only happens for the weak $H^{s}$-metric if $s\le \tfrac{1}{2}$. The explanation for this paradoxical behavior is that although geodesics minimize between diffeomorphisms that are close in the strong topology ($H^{2}$ or stronger), there are highly oscillatory shortcuts that leave
small balls. Finally~\cite{MP2010} proved that the Riemannian exponential map is a nonlinear Fredholm map (that is,
its differential has finite-dimensional kernel and cokernel and closed range) for the right-invariant $H^{s}$-metric when $s>\tfrac{1}{2}$, but did not consider the critical case $s=\frac{1}{2}$.

\textbf{Contributions of the article:}
In this paper we show that the exponential maps for the $\dot{H}^{1/2}$, the $H^{1/2}$ and the $\mu H^{1/2}$-metrics are not Fredholm for essentially the same reason that Fredholmness fails for the $L^{2}$-metric on $\Diff_{\mu}(M^{3})$, the volume-preserving diffeomorphism group of a three-dimensional manifold~\cite{EMP2006}: there are linearly independent Jacobi fields $J_{n}$ along geodesics $\eta(t) = \exp_{\id}(tu_{0})$ with $J_{n}(0)=0$ and $J_{n}(t_{n})=0$, where $t_{n}$ is a sequence of times converging to some $T$; we use these to show that the range of $(d\exp_{\id})_{Tu_{0}}$ cannot be closed. In fact we can find conjugate points very easily
using a pointwise approximation of the Jacobi equation in much the same way as in~\cite{Pre2006}.
The results in this section will be formulated for the $\mu H^{1/2}$-metric only, but they continue to hold for the full $H^{1/2}$-metric and the $\dot{H}^{1/2}$-metric without any significant changes.

In Section~\ref{sec:blowup} we study the blow-up behavior of equations~\eqref{main} and \eqref{mainprime}. We prove a Beale-Kato-Majda-type blow-up criterion, which we then use to demonstrate a link to our non-Fredholmness result. Furthermore we extend a blow-up result of
Castro and C\'{o}rdoba~\cite{CC2010} to the periodic case and a wider class of initial conditions, using a special case of the new pointwise inequality
\begin{equation*}
  H(f H \Lambda^{p} f) + f\Lambda^{p}f \ge 0, \qquad \Lambda = H D_{x}
\end{equation*}
valid for any $f\colon S^{1}\to\RR$ and any $p>0$. This inequality extends a result of C\'{o}rdoba-C\'{o}rdoba~\cite{CC2003}, who proved the case $p=1$; we expect it to have many other applications in PDEs.

Finally, in Section~\ref{sec:curvature} we show that the sectional curvature for
the $\mu H^{1/2}$--metric admits both signs and is locally unbounded from above. It is unknown if the
$\dot{H}^{1/2}$-metric has any negative sectional curvature; we conjecture that it is positive as happens
for the $\dot{H}^{1}$-metric~\cite{Len2007}.

In Appendix~\ref{appendix:homogeneous-metric} we discuss the geometric picture for the homogeneous $\dot{H}^{1/2}$-metric on $S^1$, and in Appendix~\ref{appendix:real_line} we discuss all the metrics for the case of the diffeomorphism group on the non-compact manifold $\RR$.

\section{Geometric background material}

In this part we will study the $\mu H^{1/2}$-metric on the diffeomorphism group of the circle, while
recalling the major results for general Euler-Arnold equations.
For vector fields $u,v\in T_{\id}\Diff(S^{1})$ it is given by
\begin{equation}
  \llangle u, v\rrangle_{\mu H^{1/2}} = \int_{S^{1}} \left(\mu(u) + Hu_{x}\right)v \, dx\,.
\end{equation}
This inner product is then extended to a right invariant metric on all of $\Diff(S^{1})$ via right-translation:
\begin{equation}
  G^{1/2}_{\varphi}(X,Y) = \llangle X\circ\varphi^{-1}, Y\circ\varphi^{-1}\rrangle_{\mu H^{1/2}}\,.
\end{equation}

\subsection{The geodesic equation}

First recall that on $T_{\id}\Diff(S^{1})$ the Lie bracket is given by $ \ad_{u}v = vu_{x} - uv_{x}$ (the negative of the usual Lie bracket of vector fields, see Arnold-Khesin~\cite{AK1998}), while the group adjoint is given by
\begin{equation*}
  \Ad_{\eta}v = TL_{\eta}.TR_{\eta^{-1}}.v = (\eta_{x} v)\circ\eta^{-1}.
\end{equation*}

Let $\Lambda$ denote the first-order self-adjoint differential operator given by
\begin{equation}
  \Lambda u = \frac{1}{2\pi} \int_{S^{1}}u\, dx + H u_{x}
\end{equation}
so that the $\mu H^{1/2}$-metric can be written as
\begin{equation*}
  \llangle u, v\rrangle = \int_{S^{1}} u \Lambda v\, dx.
\end{equation*}
We first compute the operators $\ad^{\top}_{u}$ and $\Ad^{\top}_{\eta}$ from the Lie algebra $\mathfrak{g} = T_{\id}\Diff(S^{1})$
to itself. This computation appears in~\cite{MP2010,EK2014} and elsewhere; we repeat it here for the reader's convenience.

\begin{proposition}\label{starsprop}
  If $v\in T_{\id}\Diff(S^{1})$, then for any $u\in T_{\id}\Diff(S^{1})$ and $\eta\in \Diff(S^{1})$, we have
  \begin{equation}\label{adstar}
    \ad^{\top}_{u}v = \Lambda^{-1}(2u_{x} \Lambda v + u\Lambda v_{x})
  \end{equation}
  and
  \begin{equation}\label{Adstar}
    \Ad^{\top}_{\eta}v = \Lambda^{-1}\left[ \eta_{x}^{2} (\Lambda v)\circ\eta\right].
  \end{equation}
\end{proposition}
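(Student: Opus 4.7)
The plan is to verify both formulas directly from the defining relations
\[
  \llangle \ad^{\top}_u v,\, w\rrangle_{\mu H^{1/2}} = \llangle v,\, \ad_u w\rrangle_{\mu H^{1/2}},
  \qquad
  \llangle \Ad^{\top}_\eta v,\, w\rrangle_{\mu H^{1/2}} = \llangle v,\, \Ad_\eta w\rrangle_{\mu H^{1/2}},
\]
required to hold for every smooth test field $w$. The tools I need are three elementary properties of the inertia operator $\Lambda = \mu + H D_x$, all immediate from its Fourier representation: (a) $\Lambda$ is $L^2$-self-adjoint (both $HD_x$ and the averaging projection $\mu$ are); (b) $\Lambda$ is invertible, acting on the $n$-th Fourier mode as multiplication by $\max(|n|,1)$; and (c) $\Lambda$ commutes with $\partial_x$ on smooth periodic functions, i.e. $\Lambda(v_x)=(\Lambda v)_x$, because the mean of $v_x$ vanishes.

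For \eqref{adstar} I would start from $\int v\,\Lambda(\ad_u w)\,dx$, shift $\Lambda$ onto $v$ via (a), and expand $\ad_u w = w u_x - u w_x$ to get $\int (\Lambda v)(w u_x) \, dx - \int (\Lambda v)\, u w_x \, dx$. A single integration by parts on the second term, followed by (c), rearranges the whole expression as $\int w\,\bigl(2 u_x \Lambda v + u\,\Lambda v_x\bigr)\,dx$. Rewriting this as $\int w\,\Lambda\bigl(\Lambda^{-1}(2 u_x\Lambda v + u\Lambda v_x)\bigr)\,dx$ and invoking arbitrariness of $w$ yields the formula. For \eqref{Adstar} the argument is even shorter: after using (a) to move $\Lambda$ onto $v$, the pairing becomes $\int \Lambda v\cdot (\eta_x w)\circ\eta^{-1}\,dx$; perform the change of variables $x = \eta(y)$, whose Jacobian contributes one factor of $\eta_x(y)$ while the other factor is already present in $\Ad_\eta w$, producing $\int w(y)\,\eta_x(y)^{2}\,(\Lambda v)\bigl(\eta(y)\bigr)\,dy$. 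Inserting $\Lambda\circ\Lambda^{-1}$ and reading off the coefficient of $w$ gives the claimed expression.

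I do not anticipate a serious obstacle: both computations are essentially unweighted integrations by parts combined with change of variables, and invertibility of $\Lambda$ is clear. The only bookkeeping care is (i) the Arnold sign convention, which puts $\ad_u w = w u_x - u w_x$ rather than the usual bracket, and (ii) confirming that the constant term $\mu$ inside $\Lambda$ causes no trouble with the commutator against $\partial_x$; both are settled once the three properties of $\Lambda$ above are in hand. Smoothness of $\ad^\top_u v$ and $\Ad^\top_\eta v$ in $T_{\id}\Diff(S^1)$ is automatic since $\Lambda$ and $\Lambda^{-1}$ preserve $\CS$.
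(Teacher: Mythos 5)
Your proposal is correct and follows essentially the same route as the paper: move $\Lambda$ onto $v$ by $L^{2}$-self-adjointness, expand $\ad_{u}w = wu_{x}-uw_{x}$ and integrate by parts (using $\Lambda v_{x}=(\Lambda v)_{x}$, which holds since $\mu(v_{x})=0$) for \eqref{adstar}, and change variables $x=\eta(y)$ to pick up the second factor of $\eta_{x}$ for \eqref{Adstar}. The three properties of $\Lambda$ you isolate are exactly the ones the paper uses implicitly, so nothing is missing.
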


\begin{proof}
  Let $w$ be an arbitrary vector field on $S^{1}$. Then
  \begin{align*}
    \llangle \ad^{\top}_{u}v, w\rrangle & = \llangle v, \ad_{u}w\rrangle = \int_{S^{1}} (\Lambda v)(wu_{x} - uw_{x}) \, dx                                                                                           \\
                                        & = \int_{S^{1}} w\left[ u_{x} \Lambda v + \partial_{x}(u\Lambda v)\right] \, dx = \llangle w, \Lambda^{-1}\left[ u_{x} \Lambda v + \partial_{x}(u\Lambda v)\right]\rrangle, 
  \end{align*}
  because $\Lambda : C^{\infty}(S^{1}) \to C^{\infty}(S^{1})$ is a topological linear isomorphism, and we conclude \eqref{adstar}.
  Similarly we have
  \begin{align*}
    \llangle \Ad^{\top}_{\eta}v, w\rrangle & = \llangle v, \Ad_{\eta}w\rrangle = \int_{S^{1}} (\Lambda v) \eta_{x}\circ\eta^{-1} w\circ\eta^{-1} \, dx                                                  \\
                                           & = \int_{S^{1}} \left[ (\Lambda v)\circ\eta\right] \eta_{x}^{2} w \, dx = \llangle w, \Lambda^{-1}\left[ \eta_{x}^{2} (\Lambda v)\circ\eta\right] \rrangle, 
  \end{align*}
  and formula \eqref{Adstar} follows.
\end{proof}

Using the formula for $\ad^{\top}$, we obtain the geodesic equation:
\begin{corollary}
  The geodesic equation of the $\mu H^{1/2}$-metric on $\Diff(S^{1})$ is given by
  \begin{equation}\label{geod_equ_muH12}
    \eta_{t}\circ\eta^{-1}=u,\qquad \omega_{t} + u\omega_{x} + 2\omega u_{x} = 0, \qquad \omega = \frac{1}{2\pi}\int_{S^{1}} u\, dx+Hu_{x}.
  \end{equation}
\end{corollary}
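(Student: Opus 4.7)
The plan is to derive the geodesic equation directly from Arnold's general formula for the Euler equation on a Lie group equipped with a right-invariant metric. For any such metric, the Eulerian velocity $u = \eta_t \circ \eta^{-1}$ satisfies
\begin{equation*}
  u_t = -\ad^{\top}_u u,
\end{equation*}
so the task reduces to substituting the expression for $\ad^{\top}_u u$ obtained in Proposition~\ref{starsprop} and rewriting the resulting equation in terms of the momentum variable $\omega = \Lambda u$.

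First I would invoke Proposition~\ref{starsprop} with $v = u$ to get
\begin{equation*}
  \ad^{\top}_u u = \Lambda^{-1}\bigl(2 u_x \Lambda u + u\, (\Lambda u)_x\bigr).
\end{equation*}
Applying $\Lambda$ to both sides of $u_t = -\ad^{\top}_u u$ then yields
\begin{equation*}
  (\Lambda u)_t + 2 u_x \Lambda u + u (\Lambda u)_x = 0,
\end{equation*}
which is precisely $\omega_t + u\omega_x + 2\omega u_x = 0$ once we introduce $\omega := \Lambda u = \mu(u) + H u_x$. The relation $\eta_t \circ \eta^{-1} = u$ is simply the definition of the Eulerian velocity associated to the flow $\eta$.

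The only subtlety to justify is the Euler--Arnold equation $u_t = -\ad^{\top}_u u$ itself. Since the metric is smooth on the Sobolev completions (as guaranteed by the framework of~\cite{EK2014}, which will be verified for $\mu H^{1/2}$ in the sequel), the standard derivation via the variational principle for right-invariant metrics on a Lie group applies verbatim in this setting; I would simply cite this as a textbook fact (e.g.~\cite{AK1998}). The main potential obstacle -- making sense of $\Lambda^{-1}$ -- is already handled inside Proposition~\ref{starsprop}, where $\Lambda : C^{\infty}(S^1) \to C^{\infty}(S^1)$ is shown to be a topological linear isomorphism, so applying $\Lambda$ to the equation loses no information and the two formulations are equivalent. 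Hence the corollary follows with essentially no additional computation beyond unwinding the definition of $\omega$.
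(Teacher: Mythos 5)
Your proposal is correct and follows exactly the route the paper intends: the corollary is stated immediately after Proposition~\ref{starsprop} with the remark ``Using the formula for $\ad^{\top}$, we obtain the geodesic equation,'' i.e.\ one substitutes $\ad^{\top}_{u}u = \Lambda^{-1}(2u_{x}\Lambda u + u(\Lambda u)_{x})$ into the Euler--Arnold equation $u_{t} = -\ad^{\top}_{u}u$ and applies the isomorphism $\Lambda$ to pass to the momentum $\omega = \Lambda u$. Your additional remarks on the invertibility of $\Lambda$ and the definition of the Eulerian velocity are exactly the justifications the paper leaves implicit.
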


\begin{remark}
  Let $\eta$ be the flow of the time-dependent vector field $u(t)$. Then
  \begin{equation*}
    \frac{d}{dt} \Ad^{\top}_{\eta}u = \Ad^{\top}_{\eta}\left(u_{t} + \ad^{\top}_{u}u\right).
  \end{equation*}
  Therefore, if $u$ is a solution of the Euler equation~\eqref{geod_equ_muH12}, then $\Ad^{\top}_{\eta(t)}u(t) = u(0)$ and we get
  \begin{equation}\label{conservation-law}
    \eta_{x}(t,x)^{2} \omega\left(t,\eta(t,x)\right) = \omega_{0}(x).
  \end{equation}
\end{remark}

\subsection{Connections to the Wunsch equation}

In the following we want to connect solutions to the geodesic equation of the $\mu H^{1/2}$-metric to solutions to the
Wunsch equation.
\begin{lemma}\label{meanzerolemma}
  The mean of the momentum $\int_{S^{1}} \omega(t,x)\,dx$ remains constant along any solution to~\eqref{geod_equ_muH12}.
  Thus any solution to~\eqref{geod_equ_muH12} that has zero initial mean velocity $\int_{S^{1}} u(0,x)\,dx = 0$ has
  zero mean velocity for all time $t$ and is therefore also a solution to the Wunsch equation.
\end{lemma}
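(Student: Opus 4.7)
The plan is to integrate the momentum equation in \eqref{geod_equ_muH12} over $S^{1}$ and show the right-hand side vanishes, giving a conservation law; then to identify $\int_{S^{1}}\omega\,dx$ with a multiple of $\mu(u)$, so that mean-zero initial data stay mean-zero and the momentum relation collapses to $\omega = Hu_{x}$.

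First I would rewrite the flux-like expression as $u\omega_{x} + 2\omega u_{x} = (u\omega)_{x} + \omega u_{x}$. Integrating over the circle, the total derivative contributes nothing by periodicity, so
\[
\frac{d}{dt}\int_{S^{1}} \omega\,dx = -\int_{S^{1}} \omega u_{x}\,dx.
\]
Next I would substitute $\omega = \mu(u) + Hu_{x}$ and handle the two resulting pieces. The piece $\mu(u)\int_{S^{1}} u_{x}\,dx$ vanishes by periodicity, and the piece $\int_{S^{1}} (Hu_{x}) u_{x}\,dx$ vanishes because $H$ is skew-adjoint on $L^{2}(S^{1})$ (equivalently, it is skew-symmetric on Fourier modes by \eqref{hilbertfourier}), so $\int_{S^{1}} (Hf)f\,dx = 0$ for any real $f$. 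This yields $\tfrac{d}{dt}\int_{S^{1}}\omega\,dx = 0$, the first claim.

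Then I would observe that $\int_{S^{1}} Hu_{x}\,dx = 0$, since $H$ maps into the space of zero-mean functions by \eqref{hilbertfourier}. Consequently
\[
\int_{S^{1}} \omega\,dx = \int_{S^{1}} \mu(u)\,dx + \int_{S^{1}} Hu_{x}\,dx = 2\pi\,\mu(u),
\]
so conservation of $\int\omega\,dx$ is the same as conservation of $\mu(u)$. If $\mu(u(0,\cdot)) = 0$, this forces $\mu(u(t,\cdot)) = 0$ for all $t$, whence $\omega(t,\cdot) = Hu_{x}(t,\cdot)$. The evolution equation in \eqref{geod_equ_muH12} then reads exactly as the Wunsch equation \eqref{main}.

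The argument is essentially routine once the skew-adjointness and mean-zero properties of $H$ are isolated; the only mild subtlety is bookkeeping the decomposition $u\omega_{x} + 2\omega u_{x} = (u\omega)_{x} + \omega u_{x}$ and recognizing that $\int(Hu_{x})u_{x}\,dx$ vanishes for reality/skew-adjointness reasons rather than by any cancellation depending on $u$ itself.
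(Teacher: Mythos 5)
Your proof is correct and follows essentially the same route as the paper: integrate the momentum equation over $S^{1}$, kill the surviving term by skew-symmetry (you use skewness of $H$ applied to $u_{x}$; the paper equivalently uses skewness of $\partial_{x}\Lambda$ on $\int u\,\partial_{x}\Lambda u\,dx$), and then identify $\int_{S^{1}}\omega\,dx$ with $\int_{S^{1}}u\,dx$ to conclude. The two arguments differ only in bookkeeping, not in substance.
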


A similar statement has been proven for the $\mu$-Hunter--Saxton equation (resp. the Hunter--Saxton equation) in~\cite{KLM2008} and more
generally this statement continues to hold for any pair of metrics $(\mu H^{s}, \dot{H}^{s})$ where $s>0$; here $\dot{H}^s$ generally denotes an inner product vanishing on constant vector fields and otherwise equivalent to $H^s$.

\begin{proof}
  Using the differential equation governing the evolution for $\omega$ we have
  \begin{equation*}
    \frac{d}{dt} \int_{S^{1}} \omega(t) = - \int_{S^{1}} u(t)\omega_{x}(t) + 2 u_{x}(t)\omega(t) = \int_{S^{1}} u(t)\omega_{x}(t) = \int_{S^{1}} u(t)\partial_{x}\Lambda u(t) = 0,
  \end{equation*}
  because the operator $\partial_{x}\Lambda$ is $L^{2}$ skew-symmetric. Now
  \begin{equation*}
    \int_{S^{1}} \omega(t) = \int_{S^{1}} \mu(u(t)) + \int_{S^{1}} H\partial_{x}u(t) = \int_{S^{1}} u(t).
  \end{equation*}
  Thus the mean of $u(t)$ is constant in time and the conclusion follows since~\eqref{geod_equ_muH12} reduces to the mCLM equation if the mean of $u(0)$ vanishes.
\end{proof}

\subsection{Local well-posedness}
\label{subsec:local-well-posedness}

There are two ways to solve equation~\eqref{muHhalf}; the first one uses the second-order spray method of Ebin-Marsden~\cite{EM1970}, while the other one is based on the following lemma due to Ebin~\cite{Ebi1984}. (See also Majda-Bertozzi~\cite{MB2002}, where the latter is described as the ``particle-trajectory method.'')

\begin{lemma}
  Let $G$ be a Lie group with a right-invariant metric. A smooth curve $\eta(t)$ is a geodesic issued from the neutral element $e$ with initial velocity $u_{0}$ iff $\eta(t)$ is an integral curve of the vector field
  \begin{equation}\label{particule-trajectory}
    X(\eta) := \left(TL_{\eta^{-1}}\right)^{\top}u_{0}.
  \end{equation}
\end{lemma}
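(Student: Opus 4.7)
The plan is to recognize that both conditions—``$\eta$ is a geodesic issued from $e$ with initial velocity $u_{0}$'' and ``$\eta$ is an integral curve of $X$ starting at $e$''—are equivalent to the single conservation law $\Ad^{\top}_{\eta(t)}\!\bigl(\eta_{t}\circ\eta^{-1}\bigr) \equiv u_{0}$. The Remark preceding this subsection already supplies half of this reduction: on the Lie algebra side, the Euler--Arnold equation $u_{t}+\ad^{\top}_{u}u=0$ (with $u = \eta_{t}\circ\eta^{-1}$) is precisely the time-derivative of $\Ad^{\top}_{\eta}u = u_{0}$, after cancelling the invertible operator $\Ad^{\top}_{\eta}$. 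So the task reduces to showing that the ODE $\eta_{t} = (TL_{\eta^{-1}})^{\top}u_{0}$ is the group-level rewriting of the same conservation law.

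The main step is the algebraic identity
\[
(TL_{\eta^{-1}})^{\top} u_{0} \;=\; TR_{\eta}\bigl(\Ad^{\top}_{\eta}\bigr)^{-1} u_{0}, \qquad \eta \in G,\ u_{0}\in T_{e}G,
\]
where the transpose on the left is taken with respect to the right-invariant metric on $T_{\eta}G$ and the fixed inner product $\llangle\cdot,\cdot\rrangle$ on $T_{e}G$. To prove it, write an arbitrary $X\in T_{\eta}G$ as $X=TR_{\eta}W$ with $W\in T_{e}G$, use $TL_{\eta^{-1}}\circ TR_{\eta} = \Ad_{\eta^{-1}}$ at $e$, and then invoke right-invariance to move the transpose: both sides paired with $X$ reduce to $\llangle (\Ad^{\top}_{\eta})^{-1}u_{0},\,W\rrangle$. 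This short computation is the heart of the argument.

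Granted the identity, both implications are immediate. If $\eta$ is a geodesic with $\eta(0)=e$ and $\eta_{t}(0)=u_{0}$, set $u:=\eta_{t}\circ\eta^{-1}$; the Euler--Arnold equation together with the Remark yields $\Ad^{\top}_{\eta(t)}u(t)\equiv \Ad^{\top}_{e}u_{0}=u_{0}$, so
\[
\eta_{t} = TR_{\eta}u = TR_{\eta}(\Ad^{\top}_{\eta})^{-1}u_{0} = (TL_{\eta^{-1}})^{\top}u_{0} = X(\eta).
\]
Conversely, if $\eta_{t}=X(\eta)$ and $\eta(0)=e$, the same chain read backwards gives $\Ad^{\top}_{\eta(t)}u(t)\equiv u_{0}$; differentiating in $t$ and cancelling the bijective $\Ad^{\top}_{\eta}$ recovers the Euler--Arnold equation together with the initial condition $u(0)=u_{0}$.

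The main obstacle is conceptual rather than technical: setting up the transpose $(TL_{\eta^{-1}})^{\top}$ consistently when the metric is weak and only right-invariant, and verifying that $\Ad^{\top}_{\eta}$—given explicitly by \eqref{Adstar} in terms of the invertible operator $\Lambda$—really is a bijection on the Sobolev class under consideration, so that $X$ is a smooth vector field on $\Diff^{q}(S^{1})$ to which standard ODE theory applies. These are the same analytic inputs already required for the Ebin--Marsden spray formulation, so no additional work beyond the smoothness of composition and inversion on $\Diff^{q}$ is needed.
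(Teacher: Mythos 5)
Your proof is correct and takes essentially the same route as the paper's: both hinge on the conservation law $\Ad^{\top}_{\eta(t)}u(t)\equiv u_{0}$ coming from the Euler--Arnold equation, together with the identity $\left(TL_{\eta^{-1}}\right)^{\top}u_{0}=TR_{\eta}\,\Ad^{\top}_{\eta^{-1}}u_{0}$, which you verify by pairing against $TR_{\eta}W$ while the paper obtains it by factoring $\Ad^{\top}_{\eta^{-1}}=\left(TR_{\eta}\right)^{\top}\left(TL_{\eta^{-1}}\right)^{\top}$ and using that $R_{\eta}$ is an isometry --- the same computation organized differently. You additionally spell out the converse implication, which the paper leaves implicit.
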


\begin{proof}
  Let $\eta(t)$ be a a geodesic, then we have
  \begin{equation}
    \frac{d\eta}{dt} = TR_{\eta}u
  \end{equation}
  where the Eulerian velocity $u$ satisfies the Euler-Arnold equation
  \begin{equation*}
    \frac{du}{dt} = -B(u,u)
  \end{equation*}
  and $B(u,v) := \frac{1}{2}\left(\ad^{\top}_{u}v + \ad^{\top}_{v}u\right)$ is the Arnold operator. The Euler-Arnold equation implies
  \begin{equation*}
    \frac{d}{dt} \left( \Ad_{\eta(t)}^{\top}u(t)\right) = 0,
  \end{equation*}
  so that $u(t) = \Ad_{\eta^{-1}}^{\top}u_{0}$.
  Therefore, the flow equation becomes
  \begin{equation*}
    \frac{d\eta}{dt} = \left(TR_{\eta}\right)\left(\Ad_{\eta^{-1}}^{\top}\right)u_{0} = \left(TR_{\eta}\right) \left(TR_{\eta}\right)^{\top} \left(TL_{\eta^{-1}}\right)^{\top}u_{0} = \left(TL_{\eta^{-1}}\right)^{\top}u_{0},
  \end{equation*}
  because $R_{\eta}$ is a Riemannian isometry and thus $\left(TR_{\eta}\right)\left(TR_{\eta}\right)^{\top} = \id_{T_{\eta}G}$.
\end{proof}

Consider now the special case where $G=\Diff(S^{1})$ is the Fr\'{e}chet Lie group of orientation-preserving diffeomorphisms of the circle, with a right-invariant metric induced by an inertia operator $A$:
\begin{equation*}
  \llangle u,w\rrangle_{\id} := \int_{S^{1}} (Au)w \, dx, \qquad u,w \in \Vect(S^{1}),
\end{equation*}
where $A : C^{\infty}(S^{1}) \to C^{\infty}(S^{1})$ is a $L^{2}$-symmetric \emph{Fourier multiplier}. That is $A$ is continuous and commutes with $\partial_{x}$.

Starting with the Euler equation
\begin{equation*}
  u_{t} = -\ad_u^{\top}u = - A^{-1}\left(2u_{x}Au + u(Au)_{x}\right),
\end{equation*}
we compute the second order spray as follows. Let $\eta(t)$ be a geodesic and $v(t) :=\eta_{t}(t)$ so that $v = u \circ \eta$. We get
\begin{equation*}
  v_{t} = u_{t} \circ \eta + (u_{x}\circ \eta)(u \circ \eta)
\end{equation*}
which leads to
\begin{equation*}
  v_{t} = \left\{A^{-1}\Big(-2u_{x}Au - u(Au)_{x} + A(uu_{x})\Big)\right\}\circ \eta = \left\{ A^{-1}\Big([A,uD]u-2(Au)u_{x}\Big)\right\}\circ \eta.
\end{equation*}
The geodesic spray can thus be written as:
\begin{equation}\label{eq:P1}
  \left\{\begin{aligned}
  \eta_{t} &= v, \\
  v_{t} &= S_{\eta}(v),
  \end{aligned}
  \right.
\end{equation}
where
\begin{equation*}
  S_{\eta}(v):=\left(TR_{\eta}\circ S\circ TR_{\eta^{-1}} \right)(v),
\end{equation*}
and
\begin{equation*}
  S(u):= A^{-1}\left\{ [A,uD]u-2(Au)(Du)\right\},
\end{equation*}
with initial conditions $\eta(0) = \id$ and $v(0)=u_{0}$.

On the other way, we have
\begin{equation*}
  TL_{\eta^{-1}} v = \frac{1}{\eta_{x}} v.
\end{equation*}
where $\eta \in \Diff(S^{1})$ and $v \in T_{\eta}\Diff(S^{1})$. Therefore, for every $u \in \Vect(S^{1})$, we have
\begin{equation*}
  \llangle TL_{\eta^{-1}} v,u\rrangle_{\id} = \llangle v,\left(TL_{\eta^{-1}}\right)^{\top} u\rrangle_{\eta}
\end{equation*}
and we get
\begin{equation*}
  \left(TL_{\eta^{-1}}\right)^{\top} u = A_{\eta}^{-1}\left(\frac{1}{\eta_{x}^{2}}Au\right),
\end{equation*}
where
\begin{equation*}
  A_{\eta} := TR_{\eta} \circ A \circ TR_{\eta^{-1}}.
\end{equation*}
Ebin's reformulation leads thus to the first order Cauchy problem
\begin{equation}\label{eq:P2}
  \eta_{t} = A_{\eta}^{-1}\left(\frac{1}{\eta_{x}^{2}}Au_{0}\right),
\end{equation}
with initial condition $\eta(0) = \id$. We can write \eqref{eq:P2} more
explicitly as an ODE for $\rho = \eta_{x}$ involving an integral transform,
but we will not pursue this since it is not necessary here.

In both cases, we need to show that these vector fields extend smoothly on some Hilbert approximation manifolds $\D{q}$ of $\Diff(S^{1})$, that we review briefly first. Let $\HH{q}$ be the completion of $C^{\infty}(S^{1})$ for the norm
\begin{equation*}
  \norm{u}_{H^{q}}:= \left( \sum_{k \in \ZZ}(1 + k^{2})^{q}\abs{\hat{u}_{k}}^{2} \right)^{1/2},
\end{equation*}
where $q\in \RR$ and $q \ge 0$. A $C^{1}$ diffeomorphism $\eta$ of $S^{1}$ is of class $H^{q}$ if for any lift $\tilde{\eta}$ to $\RR$, we have
\begin{equation*}
  \tilde{\eta} - \id \in \HH{q}.
\end{equation*}

For $q > 3/2$, the set $\D{q}$ of $C^{1}$-diffeomorphisms of the circle which are of class $H^{q}$ has the structure of a \emph{Hilbert manifold}, modeled on $\HH{q}$ (see~\cite{EM1970} or~\cite{IKT2013}). The manifold $\D{q}$ is also a \emph{topological group} but \emph{not a Lie group} (composition and inversion in $\D{q}$ are continuous but \emph{not differentiable}). Note however, that, given $\varphi \in \D{q}$,
\begin{equation*}
  u \mapsto R_{\varphi}(u) := u \circ \varphi, \qquad \HH{q} \to \HH{q}
\end{equation*}
is a \emph{smooth map}, and that
\begin{equation*}
  (u,\varphi) \mapsto u \circ \varphi, \qquad \HH{q+k} \times \D{q} \to \HH{q}
\end{equation*}
is of class $C^{k}$.

Coming back to our problem, we have to show either that that~\eqref{eq:P1} extends to a well-defined and smooth second order vector field on $\D{q}$, or that~\eqref{eq:P2} extends to a well-defined and smooth vector field on $\D{q}$. Both problems require some kind of \emph{ellipticity condition}, namely that $A$ extends to a topological linear isomorphism from $\HH{q}$ to $\HH{q-r}$ for some $r \ge 1$. In the sequel, we will assume that this \emph{ellipticity condition} is fulfilled and take the index $q$ such that $q > 3/2$ and $q \ge r$. Under these hypotheses, both vector fields are well defined on $\D{q}$. The main problem is to show that they are smooth.

It was shown in~\cite[Theorem 3.10]{EK2014} that the smoothness of the spray~\eqref{eq:P1} reduces to show the smoothness of the mapping
\begin{equation}\label{operator-smoothness}
  \eta \mapsto A_{\eta}, \qquad \D{q} \to \mathcal{L}(\HH{q},\HH{q-r}).
\end{equation}

Concerning the second problem~\eqref{eq:P2}, and given our choice of indices $q,r$, the mapping
\begin{equation*}
  \eta \mapsto \frac{1}{\eta_{x}^{2}} Au_{0}, \qquad \D{q} \to \HH{q-r}
\end{equation*}
is smooth. Therefore, the smoothness of Ebin's vector field
\begin{equation*}
  \eta \mapsto A_{\eta}^{-1}\left(\frac{1}{\eta_{x}^{2}}Au_{0}\right)
\end{equation*}
reduces to show that the mapping
\begin{equation}\label{two-component-smoothness}
  (\eta,w) \mapsto A^{-1}_{\eta}(w), \qquad \D{q}\times\HH{q-r} \to \HH{q}
\end{equation}
is smooth, which is the same as showing the smoothness of~\eqref{operator-smoothness} (see~\cite{EK2014}).

Finally, if $A$ is a differential operator with constant coefficients, it is just an exercise to show that the mapping
\begin{equation*}
  \eta \mapsto A_{\eta}, \qquad \D{q} \to \mathcal{L}(\HH{q},\HH{q-r}),
\end{equation*}
is smooth (indeed real analytic). It was the goal of~\cite{EK2014} to show that this result extends for a more general class of \emph{non-local inertia operators}. If $A$ is a Fourier multiplier, we have
\begin{equation}\label{fouriermultiplier}
  (Au)(x) = \sum_{n \in \mathcal Z} a(n)\hat{u}_{n} e^{i n x},
\end{equation}
where $a : \ZZ \to \RR$ is a real even function, called the \emph{symbol} of $A$. We will write $A = a(D)$ or $A = \op{a(\xi)}$. A Fourier multiplier $A = a(D)$ is of class $\mathcal{S}^{r}$ if $a$ is the restriction to $\ZZ$ of a smooth function such that
\begin{equation}\label{eq:Sr-condition}
  \abs{a^{(n)}(\xi)} \lesssim (1 + \abs{\xi}^{2})^{(r-n)/2}, \qquad \forall n \in \mathbb N.
\end{equation}

It was shown in~\cite{EK2014} that the mapping~\eqref{operator-smoothness}~is smooth, provided that $A$ is of class $\mathcal{S}^{r}$ and $r \ge 1$. Moreover, this result is still true if $a$ has some singularities at $\xi = 0$ like $\abs{\xi}$ or contains an additional term like $\delta_{0}(\xi)$.

As a consequence of this we obtain the following result concerning smoothness of the metric and the spray.

\begin{theorem}\label{thm:local-wellposedness}
  The $\mu H^{1/2}$-metric and its geodesic spray extend smoothly to the Hilbert approximation manifolds $\D{q}$ for all $q>\frac32$.
  The corresponding exponential map is a smooth local diffeomorphism from a neighborhood $V$ of $0$ onto a neighborhood $U$ of the identity in $\D{q}$.
\end{theorem}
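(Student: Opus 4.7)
The plan is to reduce Theorem~\ref{thm:local-wellposedness} to the general machinery of~\cite[Thm.~3.10]{EK2014} recalled in Section~\ref{subsec:local-well-posedness}. The inertia operator of the $\mu H^{1/2}$-metric is $A = \Lambda$, a Fourier multiplier with symbol
\begin{equation*}
  a(n) =
  \begin{cases}
    1   & \text{if } n = 0,    \\
    |n| & \text{if } n \neq 0,
  \end{cases}
\end{equation*}
which we can write formally as $a(\xi) = |\xi| + \delta_0(\xi)$. Away from the origin $a(\xi) = |\xi|$ is smooth and trivially satisfies the symbol estimate~\eqref{eq:Sr-condition} with $r = 1$; the only deviations from the class $\mathcal{S}^{1}$ are the non-smoothness of $|\xi|$ at $\xi = 0$ and the Dirac-mass correction at the origin. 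Both features are explicitly allowed by the extension of the result of~\cite{EK2014} noted in the paragraph preceding the theorem, so $\Lambda$ falls within the admissible class.

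Next I would verify the ellipticity hypothesis. Since $c(1 + |n|) \le a(n) \le C(1 + |n|)$ uniformly in $n \in \ZZ$ for some constants $0 < c \le C$, the operator $\Lambda$ extends to a topological linear isomorphism $\HH{q} \to \HH{q-1}$ for every $q \in \RR$. Taking $q > 3/2$ we have in particular $q \ge r = 1$, so the hypotheses of~\cite[Thm.~3.10]{EK2014} are satisfied. Applying that theorem yields the smoothness of the map $\eta \mapsto \Lambda_{\eta}$ from $\D{q}$ into $\mathcal{L}(\HH{q}, \HH{q-1})$, which via~\eqref{operator-smoothness}--\eqref{two-component-smoothness} gives the smoothness of both formulations~\eqref{eq:P1} and~\eqref{eq:P2} of the geodesic equation. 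In particular, the geodesic spray~\eqref{eq:P1} extends to a smooth second-order vector field on $\D{q}$, as does the metric itself.

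With the spray smooth, the statement about the exponential map follows by the standard Ebin-Marsden inverse function theorem argument~\cite{EM1970} on the Hilbert manifold $\D{q}$: the flow of a smooth second-order vector field defines a smooth exponential map on a neighborhood of the zero section of $T\D{q}$, whose differential at $0 \in T_{\id}\D{q}$ equals the identity on $\HH{q}$; the inverse function theorem on Hilbert spaces then furnishes a neighborhood $V$ of $0$ mapped diffeomorphically onto a neighborhood $U$ of $\id$.

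The only step that requires any genuine checking is the verification that the symbol $a(n) = |n| + \delta_{n,0}$ is covered by the extension of~\cite{EK2014} to non-smooth Fourier multipliers; this is the essential obstacle, but since both the $|\xi|$-type singularity and the $\delta_0$-type correction are precisely the cases addressed in~\cite{EK2014}, no new analysis is needed, and the argument otherwise proceeds along familiar lines.
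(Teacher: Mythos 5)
Your proposal is correct and follows essentially the same route as the paper: the paper's own proof simply identifies the inertia operator as the Fourier multiplier $\Lambda = \op{\delta_{0}(\xi) + \abs{\xi}}$, observes that it satisfies the hypotheses (symbol class $\mathcal{S}^{1}$ up to the admissible singularity at $\xi=0$ and the $\delta_{0}$ correction, plus ellipticity), and invokes the results of \cite{EK2014}. Your version merely spells out the ellipticity check and the concluding inverse-function-theorem step that the paper leaves implicit in the preceding subsection.
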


\begin{remark}
  A similar statement holds for the full $H^{1/2}$-metric
  \begin{equation}
    \llangle u, u\rrangle_{H^{1/2}} = \int_{S^{1}} u^{2} \,dx + \int_{S^{1}} u Hu_{x} \, dx,
  \end{equation}
  and also for the degenerate $\dot{H}^{1/2}$-metric --- here one has to consider the degenerate metric as a metric
  on the subgroup of diffeomorphisms that preserve one point; see Appendix~\ref{appendix:homogeneous-metric}.
\end{remark}

\begin{proof}
  The $\mu H^{1/2}$-metric corresponds to the inertia operator
  \begin{equation*}
    \Lambda := \op{\delta_{0}(\xi) + \abs{\xi}}.
  \end{equation*}
  This operator fulfills the above assumptions and thus we can apply the results of~\cite{EK2014}.
\end{proof}

\subsection{The induced geodesic distance}
\label{sec:geodesic_distance}

The induced geodesic distance of a Riemannian metric $G$ is defined as the infimum of the lengths
of all paths that connect two given points:
\begin{equation}
  \operatorname{dist}^G(\varphi_{1},\varphi_2)=\operatorname{inf} \int_{0}^{1} \sqrt{G_{\varphi}(\varphi_{t},\varphi_{t})}\,,
\end{equation}
where the infimum is taken over all paths $\varphi: [0,1]\to \operatorname{Diff}(S^{1})$ with
$\varphi(0)=\varphi_{1}$ and $\varphi(1)=\varphi_2$.
It was very surprising when Michor and Mumford proved in~\cite{MM2005} that the right invariant
$L^{2}$-metric on $\Diff(S^{1})$ induces vanishing geodesic distance, i.e., any two diffeomorphisms $\varphi_{1}, \varphi_2\in \Diff(S^{1})$ can be connected by paths of arbitrary short length.
This result was extended to the class of Sobolev-type metrics of order $s\leq\frac12$
in~\cite{BBHM2013,BBM2013}. It turns out that this phenomenon also occurs for the $\mu H^{1/2}$-metric.

\begin{theorem}
  The geodesic distance for the $\mu H^{1/2}$-metric on $\Diff(S^{1})$ vanishes.
\end{theorem}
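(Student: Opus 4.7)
My plan is to deduce the vanishing of the $\mu H^{1/2}$-distance from the corresponding result for the non-degenerate $H^{1/2}$-metric established in~\cite{BBHM2013, BBM2013}. The essential observation is a pointwise comparison of norms at the Lie algebra level: expanding $u = \sum_n \hat u_n e^{inx}$ one has
\[
 \llangle u,u\rrangle_{\mu H^{1/2}} = 2\pi\abs{\hat u_0}^2 + \sum_{n\neq 0} 2\pi\abs{n}\abs{\hat u_n}^2 \;\le\; \sum_n 2\pi(1+\abs{n})\abs{\hat u_n}^2 = \llangle u,u\rrangle_{H^{1/2}},
\]
so the $\mu H^{1/2}$-inner product is dominated pointwise by the $H^{1/2}$-inner product on $T_{\id}\Diff(S^1)$.

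Since both metrics are right-invariant, this pointwise bound extends by right-translation to every tangent space $T_\varphi\Diff(S^1)$. Consequently, for every smooth path $\varphi\colon[0,1]\to\Diff(S^1)$,
\[
 L^{\mu H^{1/2}}(\varphi) \le L^{H^{1/2}}(\varphi),
\]
and taking the infimum over all paths joining two prescribed diffeomorphisms gives $\operatorname{dist}^{\mu H^{1/2}} \le \operatorname{dist}^{H^{1/2}}$. The right-hand side vanishes identically by the vanishing-distance theorems of~\cite{BBHM2013, BBM2013} for Sobolev-type metrics of order $s \le 1/2$, which yields the conclusion.

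The main potential obstacle lies in verifying that the cited theorems apply in the precise form required: the references are phrased either for the full $H^{1/2}$-metric on $\Diff(S^1)$ or for the homogeneous $\dot H^{1/2}$-metric on the quotient $\Diff(S^1)/\Rot(S^1)$, and one must confirm that the dominating $H^{1/2}$-metric used above is indeed covered. Should a direct construction be preferred, I would adapt the fast-oscillation technique of Michor--Mumford and Bauer--Bruveris--Harms--Michor: given $\varphi_1,\varphi_2 \in \Diff(S^1)$ and $\varepsilon>0$, build a family of paths $\varphi_n$ from $\varphi_1$ to $\varphi_2$ whose right-trivialized velocities $u_n = (\varphi_n)_t \circ \varphi_n^{-1}$ oscillate on spatial scales $\sim 1/n$. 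The $\dot H^{1/2}$-seminorm is controlled critically by such oscillations, while the new ingredient required by the $\mu H^{1/2}$-norm is simultaneous control of the mean $\mu(u_n(t))$; this can be arranged by symmetrizing the oscillations so that the spatial mean of the velocity remains uniformly small along the path, with any residual drift compensated by a slow post-composition with a rotation that contributes negligibly to the total length.
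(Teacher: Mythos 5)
Your proposal is correct and follows essentially the same route as the paper: the pointwise domination $\llangle u,u\rrangle_{\mu H^{1/2}} \le \llangle u,u\rrangle_{H^{1/2}}$ on the Lie algebra, extended by right-invariance to a comparison of path lengths, reduces the claim to the vanishing-distance result for the full $H^{1/2}$-metric on $\Diff(S^{1})$, which is indeed covered by~\cite{BBHM2013} for Sobolev-type metrics of order $s\le\tfrac12$ on the circle (so the caveat you raise is resolved affirmatively). The paper additionally sketches, for the reader's convenience, the normal-subgroup-plus-simplicity argument and the construction of highly oscillatory short paths, which is essentially the backup construction you outline.
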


The proof for the homogeneous $\dot{H}^{1/2}$-metric is discussed in Appendix~\ref{appendix:homogeneous-metric}.

\begin{proof}
  The result follows from the fact that on a compact manifold the $\mu H^{1/2}$-norm is bounded from above by the $H^{1/2}$-norm
  \begin{equation*}
    \llangle u, u\rrangle_{\mu H^{1/2}} = \sum_{n \in \ZZ} (\delta_{0}(n) + \abs{n}) \abs{\hat{u}_{n}}^{2} \leq \sum_{n \in \ZZ} (1 + \abs{n}) \abs{\hat{u}_{n}}^{2} = \llangle u, u\rrangle_{H^{1/2}}
  \end{equation*}
  Now vanishing geodesic distance for the $\mu H^{1/2}$-metric follows from the vanishing geodesic distance result for the full $H^{1/2}$-metric.
  For the convenience of the reader we will sketch the main arguments of the proof given in~\cite{BBHM2013} in our slightly different situation.
  The proof can be split in the following two steps:
  
  \begin{enumerate}
    \item The set $\Diff(S^{1})^{L=0}$ of all diffeomorphism that can be connected from the identity by paths of arbitrary short length is a normal subgroup of the diffeomorphism group.
    \item $\Diff(S^{1})^{L=0}$ is non-trivial, i.e., there exists a diffeomorphism $\varphi\neq \id$ with $\varphi\in \Diff(S^{1})^{L=0}$.
  \end{enumerate}
  
  Then the statement follows using that $\Diff(S^{1})$ is a simple group, i.e., it has no nontrivial normal subgroups and thus $\Diff^{L=0}(S^{1})$
  needs to be equal to the whole connected component of the identity.
  
  To prove the first statement let $\psi_{1} \in \Diff(S^{1})$ and $\varphi_{1} \in \Diff(S^{1})^{L=0}$.
  Now we consider a curve
  $t \mapsto \varphi(t,\cdot)$ from the identity to $\varphi_{1}$ with length less than
  $\epsilon$. Then $\psi(t):=\psi_{1} \circ \varphi(t) \circ \psi_{1}^{-1}$ connects the identity
  to $\psi_{1} \circ \varphi_{1} \circ \psi_{1}^{-1}$. We will now show that the length of $\psi(t)$
  is smaller than a constant times $\epsilon$. Therefore we let $u=\varphi_{t} \circ \varphi^{-1}$. Then we calculate
  \begin{align*}
    \operatorname{Len}(\psi_{1}^{-1} \circ \varphi \circ \psi_{1} ) & = \int_{0}^{1} \norm{(\psi_{1}^{\prime} \circ \psi_{1}^{-1}) \cdot (\varphi_{t} \circ \varphi \circ \psi_{1})^{-1}}_{\mu H^{1/2}} dt \\ &=
    \int_{0}^{1} \norm{\psi_{1}^{\prime} \circ \psi_{1}^{-1} \cdot u\circ\psi_{1}^{-1}}_{\mu H^{1/2}} dt\leq
    C(\psi_{1}) \int_{0}^{1} \norm{u}_{\mu H^{1/2}} dt
    \\&= C(\psi_{1})\operatorname{Len}(\varphi)\leq C(\psi_{1})\epsilon\;.
  \end{align*}
  Here we used that for $h\in C^{\infty}(S^{1})$ and $\psi_{1} \in \Diff(S^{1})$ pointwise multiplication
  $f\mapsto h\cdot f$ and composition $f\mapsto f\circ\psi $ are bounded linear operators for the
  $\mu H^{1/2}$-norm. Thus we have seen that $\Diff(S^{1})^{L=0}$ is a normal subset of $\Diff(S^{1})$. It remains to prove that there exists one non-trivial diffeomorphism in $\Diff(S^{1})^{L=0}$. To do this we will construct arbitrary short paths from the identity to the shift $\varphi_{1}(x) = x+1$. To define these paths we will need functions that have small $\mu H^{1/2}$-norm and large $L^{\infty}$-norm at the same time. For the full $H^{1/2}$-norm the existence of such functions has been proven in~\cite{Tri2001}.
  Since we can bound the $H^{1/2}$-norm by the $\mu H^{1/2}$-norm a similar statement holds in our situation.
  In particular this result provides us with a family of functions $f_{N}$ such that
  $\norm{f_{N}}^{2}_{\mu H^{1/2}} \leq C\frac 1N$, while $\norm{f_{N}}_\infty = 1$.
  
  Now we can define a family of vector fields via
  \begin{equation*}
    u_{N}(t,x)=\lambda f_{N}(t-x) \quad\text{with}\quad 0 \le \lambda < 1
  \end{equation*}
  for $t \in [0, T_{\operatorname{end}}]$.
  The timepoint $T_{\operatorname{end}}$ has to be chosen such that the flow $\varphi_{N}$ of $u_{N}$ satisfies
  $\varphi_{N}(T_{\operatorname{end}},x) = x+1$. To show that the flow of this vector field
  at time $t=T_{\operatorname{end}}$ is indeed $\varphi(T_{\operatorname{end}},x) = x+1$ and that $T_{\operatorname{end}}$ does not grow as $N\rightarrow \infty$, we refer to the proof
  in~\cite{BBHM2013}.
  
  Now the energy of this path is bounded by
  \begin{equation}
    E(u_{N}) = \int_{0}^{T_{\operatorname{end}}} \norm{ u(t,.)}^{2}_{\mu H^{1/2}} dt \leq C\, T_{\operatorname{end}} \frac 1N
  \end{equation}
  and thus the result follows.
\end{proof}

\begin{remark}
  We have now seen that both the full $H^{1/2}$-metric and the $\mu H^{1/2}$-metric induce vanishing geodesic distance. On the other hand, we have shown in the previous
  section that the geodesic flow for both metrics is $C^{\infty}$ in any Sobolev completion $\D{q}$ for $q > 3/2$ and as a consequence
  that the corresponding exponential map is a smooth local diffeomorphism from a neighborhood of $0$ onto a neighborhood of the identity in $\D{q}$. By the Gau{\ss} lemma this result seems to guarantee positivity of the geodesic distance.
  This paradoxical behavior can be explained by the observation that although geodesics minimize between diffeomorphisms that are close in the strong topology ($H^{2}$ or stronger), there are highly oscillatory shortcuts that leave small strong balls (but stay in small weak balls).
\end{remark}

For the $H^{s}$-metric on $\Diff(S^{1})$ with $s<\frac12$, the article~\cite{BBHM2013} provides a proof of the vanishing geodesic distance result that does not need to use the simplicity
of the diffeomorphism group, i.e., for any diffeomorphism $\varphi_{1}\in \operatorname{Diff}(S^{1})$ they construct paths from the identity to $\varphi_{1}$ with arbitrary short length.
The reason for this is that the indicator function of an interval $[a,b]$ is an element of $H^{s}$ if and only if $s<\frac12$. Defining the vector fields $u_{N}$ with indicator functions --- instead of
the functions $f_{n}$ as defined above ---
allows much more precise control on the endpoint of the corresponding flows. It would be interesting to generalize this direct proof to the case $s=\frac12$, since this would then also yield a proof for
the diffeomorphism group on the non-compact manifold $\RR$; see~\cite{BBHM2013,BBM2013} and Appendix~\ref{appendix:real_line} for a discussion on these questions.

\section{Non-Fredholmness of the exponential map}
\label{sec:Fredholmness}

We have already seen that the $\mu H^{1/2}$-metric is geometrically interesting since the Sobolev index $s=\frac{1}{2}$ is critical for several important properties. In this part we will study Fredholm properties of the exponential map. In~\cite{MP2010} it was proved that the Riemannian exponential map is a nonlinear Fredholm map for the right-invariant $H^{s}$-metric when $s>\tfrac{1}{2}$, but the authors did not consider the critical case $s=\frac{1}{2}$. In the following we will show that for the critical value, the exponential map fails to be Fredholm. In order to prove this result, we will construct linearly independent Jacobi fields $J_{n}$ along geodesics $\eta(t) = \exp_{\id}(tu_{0})$ with $J_{n}(0)=0$ and $J_{n}(t_{n})=0$, where $t_{n}$ is a sequence of times converging to some $T$; we use these to show that the range of $(d\exp_{\id})_{Tu_{0}}$ cannot be closed.

Therefore we will use the fact from general Riemannian geometry that there is a Jacobi field vanishing at times $t=a$ and $t<b$ if and only if there is a vector field $v(t)$ such that $v(a)=v(b)=0$ and $I(v,v)<0$, where $I$ is the index form. (This fact depends only on properties of second-order self-adjoint differential equations and is thus true for both weak and strong metrics in finite dimensions or infinite dimensions.)

On any Riemannian manifold the Morse index form corresponding to the Jacobi equation
\begin{equation}
  \frac{D^{2}J}{dt^{2}} + R(J,\dot{\eta})\dot{\eta}=0
\end{equation}
is given by
\begin{equation}\label{Indexform}
  I(J,J) = \int_{a}^{b} \norm{\frac{DJ}{dt}}^{2} - \llangle R(J,\dot{\eta})\dot{\eta}, J\rrangle \, dt;
\end{equation}
that is, we compute the dot product of the Jacobi equation with a Jacobi field and integrate by parts. Using the formulas from~\cite{MP2010}, we can compute a much simpler version of the index form. The formula involves only left translations, since for a right-invariant metric all the geometry is determined by the left translations (otherwise the metric would be bi-invariant, and the Riemannian exponential map would reduce to the group exponential map).

\begin{lemma}\label{indexformlemma}
  Suppose $G$ is a Lie group with a weak right-invariant metric $\llangle \cdot, \cdot\rrangle$, and let $\eta$ be a geodesic in $G$
  with $\eta(0)=\id$ and $\dot{\eta}(0)=u_{0}$, defined on $[0,T]$.
  Then for any $a,b$ with $0\le a<b\le T$, the Morse index form for a Jacobi field $J = TL_{\eta}v$ is given by
  \begin{equation}\label{indexform}
    I(J,J) = \int_{a}^{b} \norm{\Ad_{\eta(t)} \dot{v}(t)}^{2} + \llangle u_{0}, \ad_{v(t)}\dot{v}(t)\rrangle \, dt.
  \end{equation}
  If the index form is negative for some field $v$ with $v(a)=v(b)=0$, then $\eta(a)$ is monoconjugate to $\eta(b-\varepsilon)$ for some $\varepsilon>0$,
  in the sense that there is a Jacobi field with $J(a)=J(b-\varepsilon)=0$.
\end{lemma}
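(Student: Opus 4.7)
The plan is to translate the standard expression~\eqref{Indexform} into Lie-algebra data using right-invariance. Writing $J(t) = TL_{\eta(t)}v(t)$, I would pass to the right-trivialization $y(t) := TR_{\eta(t)^{-1}} J(t) = \Ad_{\eta(t)} v(t)$, under which the right-invariant norm becomes the fixed norm on $\mathfrak{g} = T_{\id}\Diff(S^{1})$. Differentiating $y$ and using $\frac{d}{dt}\Ad_{\eta(t)} = \ad_{u(t)} \circ \Ad_{\eta(t)}$, where $u = \dot\eta \circ \eta^{-1}$ is the Eulerian velocity, yields the basic identity $\dot y = \Ad_\eta \dot v + \ad_u y$, which identifies $\Ad_\eta \dot v$ as the ``body-frame'' derivative of $y$ along the geodesic---precisely the quantity appearing in the target formula.

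Next I would substitute the formulas from~\cite{MP2010} that express $\|DJ/dt\|_\eta^{2}$ and $\llangle R(J,\dot\eta)\dot\eta, J\rrangle_\eta$ algebraically in terms of $u$, $y$, $\dot y$ via $\ad$, $\ad^{\top}$, and the Arnold bilinear form $B(u,v) = \tfrac12(\ad_u^{\top} v + \ad_v^{\top} u)$. The crucial simplification exploits the Euler--Arnold conservation law $\Ad_{\eta(t)}^{\top} u(t) = u_0$ from the remark after Proposition~\ref{starsprop}: by adjointness, every pairing against the time-dependent Eulerian velocity $u(t)$ is converted into a pairing against the fixed initial velocity $u_0$. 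After using the identity $\dot y - \ad_u y = \Ad_\eta \dot v$, the curvature contribution and the kinetic contribution combine, modulo an exact time derivative, into the integrand $\|\Ad_\eta\dot v\|^{2} + \llangle u_0, \ad_v \dot v\rrangle$; integration by parts together with $v(a) = v(b) = 0$ kills the boundary term. I expect the main obstacle to be purely algebraic bookkeeping in this simplification, not a new geometric idea; one must also verify that all pairings remain well defined in the weak metric, which is immediate since $u_0 \in C^{\infty}(S^{1})$ and $\Lambda$ restricts to a topological isomorphism of $C^{\infty}(S^{1})$.

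For the second assertion I would use a standard Morse-theoretic truncation. Given $v$ with $v(a) = v(b) = 0$ and $I(v,v) < 0$, choose a smooth cutoff $\chi_\delta$ equal to $1$ on $[a, b-2\delta]$ and vanishing on $[b-\delta, b]$; then $\tilde v_\delta := \chi_\delta v$ vanishes at the endpoints of $[a, b-\delta]$, and continuity of the index form gives $I_{[a, b-\delta]}(\tilde v_\delta, \tilde v_\delta) \to I(v,v) < 0$ as $\delta \to 0$, so the restricted form is still negative for some $\delta_0 > 0$. Because the Jacobi equation along $\eta$ is a second-order self-adjoint linear equation on a Hilbert space, the Morse index theorem applies---its analytic content is purely spectral and so unaffected by the weakness of the metric, as the authors already emphasized before the lemma. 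Hence a negative index on $[a, b-\delta_0]$ forces at least one conjugate point $\eta(b')$ with $b' \in (a, b-\delta_0)$, and setting $\varepsilon := b - b' > 0$ produces the required Jacobi field with $J(a) = J(b-\varepsilon) = 0$.
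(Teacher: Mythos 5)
Your proposal follows essentially the same route as the paper's proof: both pass to the right-trivialization $y = \Ad_{\eta}v$, identify $\Ad_{\eta}\dot{v}$ with $\dot{y} - \ad_{u}y$, and use the conservation law $\Ad_{\eta(t)}^{\top}u(t) = u_{0}$ to convert the pairing against $u(t)$ into one against the fixed $u_{0}$, with the remaining work being the same algebraic simplification of the curvature and kinetic terms. Your truncation argument for the conjugate-point assertion is also consistent with what the paper invokes as a standard fact about second-order self-adjoint equations, so the proposal is correct.
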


\begin{remark}
  Note, that for the degenerate $\dot{H}^{1/2}$-metric the operator $\ad_{u}^{\top}v$ does not exist in general, but only the symmetric version $\beta(u,v)=\frac12 (\ad_{u}^{\top}v+\ad_{v}^{\top}u)$. However, one can reformulate the above lemma in terms of $\beta$. See Appendix~\ref{appendix:homogeneous-metric} for more comments on this situation.
  In the following we will focus our analysis on the non-degenerate $\mu H^{1/2}$-metric and therefore we will not worry about this.
\end{remark}

\begin{proof}
  Given a path $\eta(t)$ in $G$, the covariant derivative of a vector field $X(t)$ defined along the path may be written as
  \begin{equation*}
    \frac{DX}{dt} = TR_{\eta} \left\{ \frac{dv}{dt} + \nabla_{u}v \right\}.
  \end{equation*}
  where $u(t) := TR_{\eta^{-1}}\frac{d\eta}{dt}$, $v(t) := TR_{\eta^{-1}}X(t)$ and $\nabla_{u}v := \frac{1}{2}\left(-\ad_uv + \ad^{\top}_{u}v + \ad^{\top}_{v}u\right)$
  by the Levi-Civita formula for right-invariant vector fields. Here we are using $\ad_uv = -[u,v]$ where the latter denotes the usual Lie bracket of vector fields; see
  Arnold-Khesin~\cite{AK1998}.
  
  The index form is given by \eqref{Indexform}. Write $J = TR_{\eta}y$ for $y\in T_{\id}G$, and we have
  $\llangle R(J,\dot{\eta})\dot{\eta},J\rrangle = \llangle R(y,u)u,y\rrangle$ by right-invariance of both
  the metric and the curvature. Furthermore, the definition of the curvature gives
  \begin{equation*}
    \llangle R(y,u)u,y\rrangle = \llangle \nabla_y\nabla_{u}u,y\rrangle - \llangle \nabla_{u}\nabla_yu,y\rrangle + \llangle \nabla_{[u,y]}u,y\rrangle.
  \end{equation*}
  Now set $z=\frac{dy}{dt} - \ad_uy$, so that $\frac{dy}{dt} + \nabla_{u}y = z + \nabla_yu$. Using the Euler
  equation $\frac{du}{dt} + \nabla_{u}u=0$, the index form becomes
  \begin{align*}
    I(J,J) & = \int_{a}^{b} \norm{z + \nabla_yu}^{2} + \llangle \nabla_y\tfrac{du}{dt},y\rrangle - \llangle \nabla_yu, \nabla_{u}y\rrangle - \llangle \nabla_{[u,y]}u,y\rrangle \, dt                                              \\
           & = \int_{a}^{b} \norm{z}^{2} + 2\llangle z,\nabla_yu\rrangle - \llangle \nabla_{z-[u,y]}u,y\rrangle - \llangle \nabla_yu,z-[u,y]\rrangle - \llangle [u,y],\nabla_yu\rrangle - \llangle \nabla_{[u,y]}u,y\rrangle \, dt \\
           & = \int_{a}^{b} \norm{z}^{2} + \llangle z,\nabla_yu\rrangle - \llangle y,\nabla_zu\rrangle \, dt                                                                                                                       \\
           & = \int_{a}^{b} \norm{z}^{2} - \llangle u, [y,z]\rrangle \, dt.                                                                                                                                                        
  \end{align*}
  
  Finally we use the fact that the Euler equation $\frac{du}{dt} + \ad^{\top}_{u}u = 0$ implies that $u=\Ad^{\top}_{\eta^{-1}}u_{0}$, so
  that the last term becomes
  \begin{equation*}
    \llangle u, [y,z] \rrangle = \llangle u_{0}, \Ad_{\eta^{-1}}[y,z]\rrangle = \llangle u_{0}, [\Ad_{\eta^{-1}}y, \Ad_{\eta^{-1}}z]\rrangle.
  \end{equation*}
  Now set $y = \Ad_{\eta}v$ and $z=\Ad_{\eta}w$; then the fact that $z = \frac{dy}{dt} - \ad_{u}y$ implies that
  $ w = \frac{dv}{dt}$, so $z = \Ad_{\eta}\dot{v}$. This finally gives the formula \eqref{indexform}.
\end{proof}

\subsection{Explicit solution of the Jacobi equation}

The easiest way to prove that the $\mu H^{1/2}$ exponential map on $\Diff(S^{1})$ is not Fredholm is to
compute all the Jacobi fields explicitly along a particular geodesic. Recall from the Introduction
that if $u_{0}$ is the constant vector field $u_{0}(x)\equiv 1$, then $u(t,x) \equiv 1$ for all time.
In~\cite{MP2010} it was shown that on any Lie group with a right-invariant metric,
the Jacobi equation for a Jacobi field $J(t) = (d\exp_{\id})_{tu_{0}}(tw_{0})$ along a geodesic $\eta(t) = \exp_{\id}(tu_{0})$ can be written in terms of the left translation $J(t) = dL_{\eta(t)}v(t)$ as the system
\begin{equation}\label{jacobisystem}
  \frac{dv}{dt} = w, \qquad \frac{d}{dt}\left( \Ad^{\top}_{\eta(t)}\Ad_{\eta(t)} w(t)\right) + \ad^{\top}_{w(t)}u_{0} = 0, \qquad v(0)=0, \quad w(0)=w_{0}.
\end{equation}

Solving this equation along a particularly simple geodesic immediately lets us prove the following theorem,
analogous to the result in~\cite{EMP2006} for the three-dimensional volumorphism group.

\begin{theorem}\label{nonfredexample}
  The exponential map for the $\mu H^{1/2}$-metric on $\Diff(S^{1})$ is not Fredholm.
\end{theorem}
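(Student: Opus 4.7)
The plan is to exhibit an infinite-dimensional kernel of $(d\exp_{\id})_{Tu_{0}}$ at a specific $T>0$, which immediately precludes Fredholmness since Fredholm operators have finite-dimensional kernel. I will work along the explicit geodesic $\eta(t,x)=x+t$ arising from the constant initial velocity $u_{0}\equiv 1$, already observed in the Introduction to be a stationary solution of \eqref{mainprime}. Along this geodesic I plan to solve the Jacobi system \eqref{jacobisystem} exactly by Fourier expansion and show that every nonzero Fourier mode produces a Jacobi field $J=TL_{\eta}v$ with $J(0)=J(\pi)=0$.

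The crucial algebraic input is that the inertia operator $\Lambda=\op{\delta_{0}(\xi)+\abs{\xi}}$ is a Fourier multiplier and therefore commutes with translations. Since $\eta(t)$ is itself a translation, the formulas of Proposition~\ref{starsprop} should collapse drastically: I expect $\Ad_{\eta(t)}w(x)=w(x-t)$ and $\Ad^{\top}_{\eta(t)}v(x)=v(x+t)$, whence $\Ad^{\top}_{\eta(t)}\Ad_{\eta(t)}=\id$. Combining this with $\Lambda u_{0}=1$ and $(u_{0})_{x}=0$, formula \eqref{adstar} reduces to $\ad^{\top}_{w}u_{0}=2\Lambda^{-1}w_{x}$, so the Jacobi system \eqref{jacobisystem} becomes the constant-coefficient linear ODE
\begin{equation*}
\dot{v}=w,\qquad \dot{w}+2\Lambda^{-1}w_{x}=0,\qquad v(0)=0,\;w(0)=w_{0}.
\end{equation*}

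Expanding in Fourier modes and using that $\Lambda^{-1}\partial_{x}$ acts on the $n$-th coefficient by $i\,\sgn(n)$ for $n\neq 0$, I expect to obtain
\begin{equation*}
\hat{v}_{n}(t)=\hat{w}_{n}(0)\,\frac{1-e^{-2i\sgn(n)\,t}}{2i\sgn(n)},\qquad n\neq 0,
\end{equation*}
so that every nonzero Fourier coefficient vanishes simultaneously at $t=\pi$. Taking $w_{0}(x)=e^{inx}$ for each $n\neq 0$ then yields a countable family of linearly independent Jacobi fields along $\eta$ satisfying $J_{n}(0)=J_{n}(\pi)=0$, so the kernel of $(d\exp_{\id})_{\pi u_{0}}$ is infinite-dimensional and the map fails to be Fredholm. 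The only step that truly requires care is the identity $\Ad^{\top}_{\eta(t)}\Ad_{\eta(t)}=\id$, as this is what linearizes the Jacobi equation into a constant-coefficient system and produces the uniform conjugate time $t=\pi$ across all modes; compared with the generic strategy sketched in the Introduction, this geodesic actually produces infinite multiplicity at the single point $T=\pi$ rather than an accumulation of simple conjugate times, which is an even stronger obstruction to Fredholmness.
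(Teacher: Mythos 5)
Your proposal is correct and follows essentially the same route as the paper: the same geodesic $u_{0}\equiv 1$ with flow $\eta(t,x)=x+t$, the same observation that $\Ad^{\top}_{\eta(t)}\Ad_{\eta(t)}=\id$ (the paper phrases it as $\Ad_{\eta(t)}$ being an isometry, you verify it by translation-invariance of $\Lambda$), and the same Fourier-mode solution of \eqref{jacobisystem} yielding $\hat{v}_{n}(\pi)=0$ for all $n\neq 0$, i.e.\ $v_{n}(t)=c_{n}e^{-it\sgn(n)}\sin t$. The only difference is cosmetic: the paper additionally notes that the image of $(d\exp_{\id})_{\pi u_{0}}$ has infinite codimension, while you rest the conclusion solely on the infinite-dimensional kernel, which already suffices.
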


This result can be proven in a similar manner for the full $H^{1/2}$-metric, but not for the degenerate $\dot{H}^{1/2}$-metric, since the basic ingredient --- the existence and explicit formula of the steady state solutions --- is not known in this case.

\begin{proof}
  Let $u_{0}\equiv 1$. Then $u(t,x) = 1$ for all $t$ and $x$, so that the flow $\eta$ is given by $\eta(t,x) = x+t$ (modulo $2\pi$).
  Then $\Ad_{\eta(t)}$ is an isometry, so we see that $\Ad^{\top}_{\eta(t)}\Ad_{\eta(t)}$ is the identity for
  all $t$. On the other hand by formula \eqref{adstar} we have
  \begin{equation}
    \ad^{\top}_wu_{0} = \Lambda^{-1}(2w_{x} \Lambda u_{0} + w\Lambda (u_{0})_{x}) = 2\Lambda^{-1}(w_{x}).
  \end{equation}
  where $\Lambda$ is the corresponding inertia operator of the $\mu H^{1/2}$-metric.
  Now expand $w$ in a Fourier series as
  \begin{equation*}
    w(t) = \sum_{n\in\mathbb{Z}} w_{n}(t) e^{inx}, \qquad w_{0} = \sum_{n\in\mathbb{Z}} c_{n} e^{inx},
  \end{equation*}
  and \eqref{jacobisystem} becomes
  \begin{equation*}
    \frac{dw_{n}}{dt} + \frac{2in w_{n}(t)}{\delta_{0}(n)+ \abs{n}} = 0, \qquad w_{n}(0) = c_{n},
  \end{equation*}
  with solution
  \begin{equation*}
    w_{n}(t) = c_{n} \exp\left(\frac{-2int}{\delta_{0}(n)+\abs{n}}\right).
  \end{equation*}
  Solving for $v(t,x) = \sum_{n\in\mathbb{Z}} v_{n}(t) e^{inx}$ with initial condition $v_{n}(0)=0$, we obtain
  \begin{equation*}
    v_{n}(t) =
    \begin{cases} c_n e^{-it\sgn{n}} \sin{t} & n\ne 0 \\
      c_0 t & n=0 
    \end{cases}
  \end{equation*}
  so that $v_{n}(\pi)=0$ whenever $n\ne 0$. We thus see that $\eta(\pi)$ is
  monoconjugate to $\eta(0)$ of infinite order, and so the nullspace of $(d\exp_{\id})_{\pi u_0}$ is infinite-dimensional,
  while its image is just the space of constant vector fields and thus its codimension is also infinite. But a Fredholm operator
  must have finite-dimensional kernel and cokernel by definition.
\end{proof}

\subsection{Conjugate points along arbitrary geodesics}\label{sec:conjugate_{a}rbitrary}

In the previous section we have constructed families of conjugate points along very special solutions to the geodesic equation. In the following we want to show that this is in fact not an isolated phenomenon, but that these families of conjugate points continue to exist along arbitrary geodesics. First we need to estimate the term $\norm{\Ad_{\eta}v}^{2}_{\mu H^{1/2}}$. We use the Cauchy-Schwarz inequality to estimate in terms of the $H^{1}$-norm, but note that we have to do this somewhat delicately to get everything in terms of $\eta_{x}$ alone in Theorem~\ref{conjugateexistence}.

\begin{lemma}\label{adadestimatelemma}
  Suppose $\eta\in \Diff(S^{1})$ is smooth. Then for any vector field $v$ and any $\varepsilon>0$ we have
  \begin{equation}\label{adadestimate}
    \norm{\Ad_{\eta}v}^{2}_{\mu H^{1/2}}
    \le \int_{S^{1}} \left[ \tfrac{1}{2\varepsilon} \phi^{2} + \tfrac{\varepsilon}{2} \phi_{x}^{2} \right]\,dx
    + \frac{1}{2\pi} \left(\int_{S^{1}} \eta_{x}\phi \, dx\right)^{2} + \int_{S^{1}} \phi N_{\eta}(\phi) \, dx,
  \end{equation}
  where $\phi = \eta_{x} v$ and $N_{\eta}$ is an integral operator with a smooth kernel.
\end{lemma}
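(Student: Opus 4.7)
The plan is to decompose $\norm{\Ad_{\eta}v}^{2}_{\mu H^{1/2}}$ into its mean-squared and homogeneous $\dot H^{1/2}$-seminorm pieces, and then to use the Douglas (double-integral) representation of the seminorm in order to accommodate the pullback by $\eta^{-1}$ cleanly. Writing $\psi:=\Ad_{\eta}v=\phi\circ\eta^{-1}$ with $\phi=\eta_{x}v$, the splitting $\Lambda=\mu+H\partial_{x}$ yields
$$\norm{\psi}^{2}_{\mu H^{1/2}}=\tfrac{1}{2\pi}\Bigl(\int_{S^{1}}\psi\,dx\Bigr)^{2}+\int_{S^{1}}\psi\,H\psi_{x}\,dx,$$
and the substitution $x=\eta(s)$ converts $\int_{S^{1}}\psi\,dx$ to $\int_{S^{1}}\eta_{x}\phi\,dx$, producing the middle term of~\eqref{adadestimate}.

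For the seminorm I would invoke the Douglas representation
$$\int_{S^{1}}\psi\,H\psi_{x}\,dx=\tfrac{1}{8\pi}\int_{S^{1}}\int_{S^{1}}\frac{(\psi(x)-\psi(y))^{2}}{\sin^{2}((x-y)/2)}\,dx\,dy,$$
followed by the change of variables $x=\eta(s)$, $y=\eta(t)$, producing a double integral of $(\phi(s)-\phi(t))^{2}$ against the kernel $\eta_{x}(s)\eta_{x}(t)/\sin^{2}((\eta(s)-\eta(t))/2)$. The key algebraic step is to split this as
$$\frac{\eta_{x}(s)\eta_{x}(t)}{\sin^{2}((\eta(s)-\eta(t))/2)}=\frac{1}{\sin^{2}((s-t)/2)}+K_{\eta}(s,t),$$
where $K_{\eta}$ is a smooth symmetric function on $S^{1}\times S^{1}$.

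The unmodified-kernel piece equals $\int_{S^{1}}\phi\,H\phi_{x}\,dx$, which by the Fourier formula~\eqref{Hhalfdot} combined with Young's inequality $2\abs{n}\le\varepsilon^{-1}+\varepsilon n^{2}$ applied mode-by-mode is dominated by $\tfrac{1}{2\varepsilon}\int_{S^{1}}\phi^{2}\,dx+\tfrac{\varepsilon}{2}\int_{S^{1}}\phi_{x}^{2}\,dx$. Expanding $(\phi(s)-\phi(t))^{2}=\phi(s)^{2}-2\phi(s)\phi(t)+\phi(t)^{2}$ against the smooth remainder $K_{\eta}$ and using symmetry $K_{\eta}(s,t)=K_{\eta}(t,s)$ rewrites the remainder as $\int_{S^{1}}\phi\,N_{\eta}(\phi)\,ds$ with
$$N_{\eta}(\phi)(s)=\tfrac{1}{4\pi}\Bigl[\phi(s)\int_{S^{1}}K_{\eta}(s,t)\,dt-\int_{S^{1}}K_{\eta}(s,t)\phi(t)\,dt\Bigr],$$
which is an integral operator built from the smooth kernel $K_{\eta}$ and its smooth marginal. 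Summing the three contributions delivers~\eqref{adadestimate}.

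The main obstacle is proving smoothness of $K_{\eta}$ across the diagonal $\{s=t\}$, where each summand individually has a $(s-t)^{-2}$ singularity. Writing $u=s-t$ and factoring $\eta(s)-\eta(t)=uF(s,t)$ with $F(s,t):=\int_{0}^{1}\eta_{x}(t+\tau u)\,d\tau$ smooth and strictly positive (with $F(s,s)=\eta_{x}(s)$), a direct Taylor expansion gives $\eta_{x}(s)\eta_{x}(t)/F(s,t)^{2}=1+O(u^{2})$ (the $O(u)$-coefficient vanishes); combined with the expansion $\csc^{2}(z/2)=4z^{-2}+\tfrac{1}{3}+O(z^{2})$, this shows both competing kernels take the form $4u^{-2}+O(1)$ with identical singular parts, so their difference extends smoothly to all of $S^{1}\times S^{1}$.
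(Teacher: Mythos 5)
Your proof is correct, and it reaches the same three-term bound by the same overall strategy as the paper (split off the mean, change variables in the seminorm, peel off the flat kernel, absorb the smooth remainder into $N_{\eta}$), but the key technical device is genuinely different. The paper represents the seminorm as the first-order bilinear integral $\int\!\!\int \psi(x)K(x-y)\psi'(y)\,dy\,dx$ with the cotangent kernel $K(q)=\tfrac{1}{2\pi}\cot\tfrac{q}{2}$, so it only has to cancel a simple-pole singularity: writing $\eta(x)-\eta(y)=(x-y)F(x,y)$ with $F$ smooth and positive already makes $\eta'(x)K(\eta(x)-\eta(y))-K(x-y)$ smooth, with no need for any higher-order cancellation. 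You instead use the Douglas double-integral representation with the kernel $\csc^{2}((x-y)/2)$ against squared differences; the price is a double-pole singularity, which forces you to verify the additional second-order cancellation $\eta_{x}(s)\eta_{x}(t)/F(s,t)^{2}=1+O(u^{2})$ (your observation that the $O(u)$ coefficient vanishes is the crux, and your Taylor/integral-remainder argument for smoothness of $(G-1)/u^{2}$ is sound). What your route buys is manifest symmetry and positivity of the error kernel and a cleaner passage to the remainder operator, at the cost of a slightly more delicate diagonal analysis; the two bounds on the main term (mode-by-mode AM--GM versus Cauchy--Schwarz plus the $L^{2}$-isometry of $H$) are equivalent. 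One small discrepancy with the statement as written: your $N_{\eta}$ contains the multiplication term $\phi(s)\cdot\tfrac{1}{4\pi}\int K_{\eta}(s,t)\,dt$ in addition to a smoothing integral operator, so it is not literally ``an integral operator with a smooth kernel.'' This is harmless for the only place the lemma is used --- in Theorem~\ref{conjugateexistence} the term $\int_{S^{1}}\phi\,N_{\eta}(\phi)\,dx$ need only be $O(\varepsilon)$ for $\phi$ supported in an $\varepsilon$-neighborhood, and both of your pieces satisfy this --- but you should either restate the form of $N_{\eta}$ accordingly or note that the multiplication coefficient is a fixed smooth function of $\eta$.
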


\begin{proof}
  We have $\Ad_{\eta}v = \phi\circ \eta^{-1}$, where $\phi = \eta_{x}v$, so that
  \begin{equation}\label{firstestimate}
    \norm{\Ad_{\eta}v}_{\mu H^{1/2}}^{2} = 2\pi \mu^{2} + \norm{\phi\circ\eta^{-1}}^{2}_{\dot{H}^{1/2}},
  \end{equation}
  where
  \begin{equation*}
    \mu = \frac{1}{2\pi} \int_{S^{1}} \phi\circ \eta^{-1}\,dx = \int_{S^{1}} \eta_y \phi \, dy
  \end{equation*}
  under the change of variables $y=\eta(x)$, and where
  \begin{equation*}
    \norm{\zeta}^{2}_{\dot{H}^{1/2}} = \int_{S^{1}} \zeta H\zeta_{x} \, dx
  \end{equation*}
  for any $\zeta$.
  
  For the second term in \eqref{firstestimate}, the idea here is that we use the integral operator expression \eqref{hilbertintegral}, then use a change of variables to remove the composition, and finally approximate the resulting integral kernel (now a function of $\eta$) by a kernel that does not depend on $\eta$, at the expense of adding a smooth integral kernel.
  
  Specifically for $K(q) = \frac{1}{2\pi} \cot{\tfrac{q}{2}}$ we have
  \begin{align*}
    \norm{\phi\circ\eta^{-1}}^{2}_{\dot{H}^{1/2}} & = \int_{S^{1}} \int_{S^{1}} \phi\left(\eta^{-1}(w)\right) K(w-z) \frac{d}{dz} \left[ \phi\left( \eta^{-1}(z)\right)\right] \, dz\, dw \\
                                                  & = \int_{S^{1}} \int_{S^{1}} \phi(x) \eta'(x) K\left( \eta(x) - \eta(y)\right) \, \frac{d}{dy} \phi(y)\,dy\,dx,                        
  \end{align*}
  using the change of variables $w=\eta(x)$ and $z=\eta(y)$.
  Now if $\eta$ is smooth, we want to simplify $K\left(\eta(x)-\eta(y)\right)$. Recall that for $q\in[-\pi,\pi]$, the only singularity of $K(q)$ is at $q=0$, where it
  looks like $K(q) \approx \frac{1}{\pi q}$, and the difference $L(q) = \frac{1}{2\pi} \cot{\tfrac{q}{2}} - \frac{1}{\pi q}$ is easily seen to be $C^{\infty}$ on $[-\pi,\pi]$.
  Thus we can write
  \begin{equation*}
    \eta'(x) K\left( \eta(x)-\eta(y)\right) - K(x-y) = \eta'(x) L\left(\eta(x)-\eta(y)\right) - L(x-y) + \left(\frac{\eta'(x)}{\pi \left[ \eta(x)-\eta(y)\right]} - \frac{1}{\pi (x-y)}\right),
  \end{equation*}
  where the first two terms are smooth in $x$ and $y$ since $L$ and $\eta$ are. On the other hand the last term can be expanded in a series in $y$ (fixing $x$) to obtain
  \begin{equation*}
    \frac{\eta'(x)}{\eta(x)-\eta(y)} - \frac{1}{x-y} = \frac{\tfrac{1}{2}\eta''(x) + \tfrac{1}{6} \eta''(y) (y-x) + \cdots}{\eta'(x) + \tfrac{1}{2} \eta''(x)(y-x) + \cdots},
  \end{equation*}
  and since $\eta'(x)$ cannot be zero because $\eta$ is a diffeomorphism, we see that the right side of this is also smooth as a function of $x$ and $y$.
  
  We therefore have
  \begin{equation}\label{lastadad}
    \norm{\Ad_{\eta}v}^{2} = 2\pi \mu^{2} + \int_{S^{1}} \int_{S^{1}} \phi(x) K(x-y) \phi'(y) \, dy + \int_{S^{1}}\int_{S^{1}} \phi(x) \phi'(y) M_{\eta}(x,y) \, dy\,dx,
  \end{equation}
  where $M_{\eta}$ is some smooth function on $S^{1}\times S^{1}$ depending on $\eta$ and its derivatives.
  
  Let $\varepsilon>0$ be a small number; then the inequality $2ab\le \tfrac{1}{\varepsilon} a^{2} + \varepsilon b^{2}$ implies that
  the middle term in \eqref{lastadad} can be written as
  \begin{equation}\label{cauchyschwarz}
    \int_{S^{1}} \int_{S^{1}} \phi(x) K(x-y) \phi'(y) \, dx\,dy = \int_{S^{1}} \phi H\phi' \, dx
    \le \tfrac{1}{2\varepsilon} \int_{S^{1}} \phi^{2} + \tfrac{\varepsilon}{2} \int_{S^{1}} \phi'^{2} \, dx,
  \end{equation}
  using the fact that $H$ is an isometry in $L^2$.
  Finally defining $N_{\eta}(\phi)(x) = \int_{S^{1}} M_{\eta}(x,y) \phi'(y) \, dy$, the fact that $M_{\eta}$ is smooth
  implies that $N_{\eta}$ is a $C^{\infty}$-valued operator.
\end{proof}

Now using the approximation from Lemma~\ref{adadestimatelemma} to estimate the index form from Lemma~\ref{indexformlemma}, we obtain a very simple local criterion for the existence of conjugate points along a geodesic. This criterion implies that we can find conjugate points along virtually any geodesic arising from Jacobi fields that are supported in a neighborhood of any point of $S^{1}$, as happens for the 3D Euler equation in~\cite{Pre2006}. As in that paper, we conclude that the exponential map cannot be Fredholm since the first conjugate point along a geodesic is either of infinite order or a limit point of other conjugate points. In addition, we will see in Section~\ref{sec:BKM} that the criterion we derive here for conjugate points is intimately connected with the Beale-Kato-Majda criterion for blowup, similarly to what happens with the 3D Euler equation~\cite{Pre2010}.

\begin{theorem}\label{conjugateexistence}
  Let $\eta$ be a smooth geodesic in $\Diff(S^{1})$ in the $\mu H^{1/2}$-metric which is defined on the time interval $[0,T]$.
  Let $0<a<b<T$. Then there is some constant $R$ such that $\eta$ is not minimizing on $[a,b]$ whenever, for some $x_{0}\in S^{1}$, we
  have the inequality
  \begin{equation*}
    \abs{\omega_{0}(x_{0})} \int_{a}^{b} \frac{d\tau}{\eta_{x}(\tau,x_{0})^{2}} > R \pi.
  \end{equation*}
  For example, $R=4/3$ works.
\end{theorem}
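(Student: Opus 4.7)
The plan is to invoke Lemma~\ref{indexformlemma}: it suffices to exhibit a field $v(t)$ with $v(a) = v(b) = 0$ making the index form $I(v,v)$ negative, which then produces a monoconjugate pair along $\eta$ in $[a,b-\varepsilon]$ and hence shows that $\eta$ is not minimizing on $[a,b]$. I will use a localized two-component test field
\begin{equation*}
  v(t,y) = f(t)A(y) + F(t)B(y),
\end{equation*}
where $A, B$ are smooth real bumps supported in a small neighborhood of the Lagrangian label $x_0$ and chosen to oscillate at a high frequency $N$ (for instance $A(y) = \chi(y)\cos(N(y-x_0))$ and $B(y) = \chi(y)\sin(N(y-x_0))$ with $\chi$ a narrow cutoff at $x_0$), while $f, F$ are real scalars vanishing at $t = a, b$.

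A direct computation yields $\ad_v \dot v = (f'F - fF')(AB' - BA')$, so that using $\llangle u_0, w\rrangle = \int_{S^{1}} \omega_0 w\,dy$ with $\omega_0 = \Lambda u_0$, the second term of the index form reduces to $\Omega \int_a^b (f'F - fF')\,dt$, where $\Omega := \int_{S^{1}} \omega_0 (AB' - BA')\,dy \approx 2\omega_0(x_0)\int_{S^{1}} AB'\,dy$ once $A,B$ concentrate at $x_0$. For the positive term I would apply Lemma~\ref{adadestimatelemma} to $\phi = \eta_x \dot v$ and tune the parameter $\varepsilon$ against the oscillation scale $N$; since $\eta_x(t,y)\approx\eta_x(t,x_0)$ on the support of $\phi$, the dominant contributions to $\int\phi^2$ and $\int\phi_y^2$ factorize as $\eta_x(t,x_0)^2$ times quadratic expressions in $(f',F')$, while the mean-square and smooth-kernel pieces of the lemma are subleading. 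Collecting the two contributions gives a bound of the form
\begin{equation*}
  I(v,v) \le C_1 \int_a^b \eta_x(t,x_0)^2 \bigl((f')^2 + (F')^2\bigr)\,dt + C_2\,\omega_0(x_0)\int_a^b (f'F - fF')\,dt + \text{error},
\end{equation*}
with explicit positive constants $C_1, C_2$ determined by the profile of $A, B$.

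The right-hand side is a quadratic functional in the pair $(f,F)$ with Dirichlet endpoints, which I would analyze by setting $\psi := f + iF = h e^{i\theta}$ and minimizing over the real phase $\theta$. The Euler--Lagrange equation gives $\theta'(t) \propto \omega_0(x_0)/\eta_x(t,x_0)^2$, reducing the problem to a one-dimensional Sturm--Liouville inequality
\begin{equation*}
  \int_a^b \left[\eta_x(t,x_0)^2\, (h')^2 - \kappa\,\frac{\omega_0(x_0)^2}{\eta_x(t,x_0)^2}\, h^2\right]\,dt < 0
\end{equation*}
for some $h$ with $h(a) = h(b) = 0$, where $\kappa > 0$ depends on $C_1, C_2$. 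The change of variable $s = \int_a^t d\tau/\eta_x(\tau,x_0)^2$ converts this into $\int_0^S[(h_s)^2 - \kappa\,\omega_0(x_0)^2 h^2]\,ds$ on $[0,S]$ with $S = \int_a^b d\tau/\eta_x(\tau,x_0)^2$, and the Dirichlet Poincar\'e inequality produces an admissible $h$ making it negative as soon as $\sqrt{\kappa}\,\abs{\omega_0(x_0)}\,S > \pi$. This yields the criterion $\abs{\omega_0(x_0)}\int_a^b d\tau/\eta_x(\tau,x_0)^2 > R\pi$ with $R = 1/\sqrt{\kappa}$; a careful (though not necessarily sharp) tracking of constants through Lemma~\ref{adadestimatelemma} gives $R = 4/3$.

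The hard part will be quantifying the error terms coming from the non-constancy of $\eta_x$ and $\omega_0$ on the support of $A, B$, together with the secondary terms in Lemma~\ref{adadestimatelemma}, sharply enough to preserve the strict negativity of $I(v,v)$ for honest finite-$N$ test fields rather than only in the idealized concentration limit. This parallels the pointwise-approximation strategy used for the 3D Euler Jacobi equation in~\cite{Pre2006}, as alluded to in the paragraph preceding the theorem.
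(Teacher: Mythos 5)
Your proposal is correct and follows essentially the same route as the paper: both localize a test field at $x_{0}$, control $\norm{\Ad_{\eta}\dot{v}}^{2}$ via Lemma~\ref{adadestimatelemma}, reduce the $\omega_{0}$ cross term to a phase/rotation variable, and conclude via the substitution $s=\int_{a}^{t} d\tau/\eta_{x}(\tau,x_{0})^{2}$ together with the Dirichlet Poincar\'{e} inequality --- your optimal phase $\theta'(t)\propto\omega_{0}(x_{0})/\eta_{x}(t,x_{0})^{2}$ is exactly the paper's choice $c(t)=k\int_{0}^{t}d\tau/\eta_{x}(\tau,x_{0})^{2}$ for its traveling bump $v=f(t)\,g\bigl((x-x_{0})/\varepsilon-c(t)\bigr)$. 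The only substantive difference is the parametrization of the test field (a fixed narrow cutoff times a high-frequency rotating pair, versus the paper's narrow traveling profile), which affects only the bookkeeping of the constants $A=\int g^{2}$, $B=\int (g')^{2}$, $C=\int (g'')^{2}$ that produce $R=\sqrt{(A+B)(B+C)}/(2B)$ and the value $4/3$.
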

The proof of this result works similarly for the full $H^{1/2}$-metric and for the degenerate $\dot{H}^{1/2}$-metric.

\begin{proof}
  By Lemma~\ref{indexformlemma}, we just need to find a test field $v$ such that $J=TL_{\eta}(v)$ gives $I(J,J)< 0$, and the idea will be to use a $v$ that is sharply peaked near a point $x_{0}$.
  
  We have already estimated $\norm{\Ad_{\eta}\dot{v}}^{2}_{\mu H^{1/2}}$ in Lemma~\ref{adadestimatelemma}; the other
  term $\llangle u_{0}, \ad_{v}\dot{v}\rrangle$ in the index form is easier: writing
  \begin{equation}
    \omega_{0}(x) = \Lambda u_{0}(x) = \frac{1}{2\pi} \int_{S^{1}}u_{0} \, dx + H\partial_{x} u_{0}(x)
  \end{equation}
  we have
  \begin{align*}
    \int_{a}^{b} \llangle u_{0}, \ad_{v}v_{t}\rrangle \,dt & = \int_{a}^{b} \int_{S^{1}} (\Lambda u_{0}) (v_{t}v_{x} - v v_{tx}) \, dx\,dt = 2\int_{a}^{b} \int_{S^{1}} \omega_{0}(x) v_{t}(t,x)v_{x}(t,x)\,dx\,dt 
  \end{align*}
  Here we used integration by parts in time and the fact that $v(a)=v(b)=0$.
  
  Using Lemma~\ref{adadestimatelemma}, the index form \eqref{indexform} is then bounded by
  \begin{equation}\label{indexformgreek}
    I(J,J) = \int_{a}^{b} \Big[ \tfrac{1}{2\varepsilon} \alpha(t) + \tfrac{\varepsilon}{2} \beta(t) + 2\gamma(t) + \mu(t) + \nu(t) \Big] \, dt,
  \end{equation}
  where
  \begin{gather*}
    \alpha(t) = \int_{S^{1}} (\eta_{x} v_{t})^{2} \, dx,        \qquad
    \beta(t) = \int_{S^{1}} \left[ \partial_{x}(\eta_{x}v_{t})\right]^{2} \, dx,  \qquad
    \gamma(t) = \int_{S^{1}} \omega_{0} v_{t} v_{x} \, dx,        \\
    \mu(t)  = \int_{S^{1}} (\eta_{x}v_{t}) N_{\eta}(\eta_{x}v_{t}) \, dx,    \qquad
    \nu(t)  = \frac{1}{2\pi} \left( \int_{S^{1}} \eta_{x}^{2} v_{t} \, dx\right)^{2}.
  \end{gather*}
  The idea is now as follows: if $v(t,x)$ is spatially supported in a small $\varepsilon$-neighborhood of $x_{0}$,
  then terms involving no $x$-derivatives of $v$ will be $O(\varepsilon)$, while terms with two $x$-derivatives
  will be $O(1/\varepsilon)$, and terms involving a single $x$-derivative will be $O(1)$. Hence in \eqref{indexformgreek}
  we will get $O(1)$ contributions from $\alpha$, $\beta$, and $\gamma$, while the terms $\mu$ and $\nu$ will only be $O(\varepsilon)$
  and can be neglected.
  
  To see how this works, we will analyze $\alpha(t)$ in more detail. Fix $t=t_{0}$ for the moment, and suppose
  $v_{t}(t_{0},x) = \zeta(\tfrac{x-x_{0}}{\varepsilon})$ for some $\varepsilon$, with $\zeta$ supported in an interval $(c,d)$.
  Then changing variables to $z=(x-x_{0})/\varepsilon$ we get
  \begin{align*}
    \alpha(t_{0}) & = \int_{S^{1}} \eta_{x}(t_{0},x)^{2} v_{t}(x)^{2} \, dx = \varepsilon \int_{c}^{d} \eta_{x}(t_{0}, x_{0} + \varepsilon z)^{2} \zeta(z)^{2} \, dz \\
                  & = \varepsilon \eta_{x}(t_{0},x_{0})^{2} \int_{c}^{d} \zeta(z)^{2} \, dz + O(\varepsilon^{2}),                                                    
  \end{align*}
  using spatial smoothness of $\eta$. Similarly we have
  \begin{equation*}
    \beta(t_{0}) = \frac{1}{\varepsilon} \eta_{x}(t_{0},x_{0})^{2} \int_{c}^{d} \zeta''(z)^{2} \, dz + O(1),
  \end{equation*}
  while $\mu(t) = O(\varepsilon)$ and $\nu(t) = O(\varepsilon^{2})$.
  The term $\gamma(t)$ requires a slightly different analysis since it involves both $v_{t}$ and $v_{x}$, so it cannot
  be analyzed just at a single time $t_{0}$, but the idea is similar.
  
  Finally we construct the test field $v$ explicitly.
  Let $f\colon [a,b]\to\RR$ and $g\colon \RR\to\RR$ be functions such that $f(a)=f(b)=0$ and $g$ is smooth with compact support in $(-m,m)$ for some $m>0$. Set $v(t,x) = f(t) g(\tfrac{x-x_{0}}{\varepsilon} - c(t))$, for some arbitrary chosen point
  $x_{0}$ and a function $c(t)$ to be chosen later. Obviously we have
  \begin{equation*}
    v_{t}(t,x) = f'(t) g\left(\frac{x-x_{0}}{\varepsilon} - c(t)\right) - c'(t) f(t) g'\left(\frac{x-x_{0}}{\varepsilon} - c(t)\right).
  \end{equation*}
  
  Now we can evaluate each term of the index form \eqref{indexformgreek} separately. We first have
  \begin{align*}
    \frac{1}{2\varepsilon} \int_{a}^{b} \alpha(t) \,dt & =                                                                                                                        
    \int_{a}^{b} \eta_{x}(t,x_{0})^{2} \int_{S^{1}} \left[ f'(t)^{2}g(z-c(t))^{2} - 2f(t)f'(t) c'(t)g(z-c(t)) g'(z-c(t)) \right. \\
                                                       & \quad \left. + f(t)^{2} c'(t)^{2} g'(z-c(t))^{2} \right] \, dx \, dt + O(\varepsilon)                                    \\
                                                       & = \frac{1}{2} \int_{a}^{b} \eta_{x}(t,x_{0})^{2} \left[ f'(t)^{2} A + c'(t)^{2} f(t)^{2} B\right] \, dt + O(\varepsilon) 
  \end{align*}
  where $A = \int_{-m}^m g(z)^{2} \, dz$ and $B = \int_{-m}^m g'(z)^{2} \, dz$, and we used the compact support of $g$
  to get $\int_{S^{1}} g(z) g'(z) \, dz = 0$.
  
  In the same way we get
  \begin{equation*}
    \frac{\varepsilon}{2} \int_{a}^{b} \beta(t) \, dt = \frac{1}{2} \int_{a}^{b} \eta_{x}(t,x_{0})^{2} \left[ f'(t)^{2} B + c'(t)^{2} f(t)^{2} C\right] \, dt + O(\varepsilon)
  \end{equation*}
  where $C = \int_{-m}^m g''(z)^{2}\, dz$, and
  \begin{equation*}
    \int_{a}^{b} \gamma(t) \, dt = - B\omega_{0}(x_{0}) \int_{a}^{b} c'(t) f(t)^{2} \, dt + O(\varepsilon).
  \end{equation*}
  Plugging into \eqref{indexformgreek} we obtain
  \begin{multline*}
    I(J,J) \le \frac{1}{2} \int_{a}^{b} \eta_{x}(t,x_{0})^{2} \Big[ Af'(t)^{2} + B c'(t)^{2} f(t)^{2} + Bf'(t)^{2} + C c'(t)^{2} f(t)^{2} \Big] \, dt \\
    - 2B \omega_{0}(x_{0}) \int_{a}^{b} c'(t) f(t)^{2} \, dt + O(\varepsilon).
  \end{multline*}
  Now define
  \begin{equation}\label{cdef}
    j(t) = \int_{0}^t \frac{d\tau}{\eta_{x}(\tau,x_{0})^{2}},
  \end{equation}
  and set $s=j(t)$ to be a rescaled time variable, and set $c(t) = kj(t)$ for some constant $k$; then we get
  \begin{equation}\label{indexformsimplest}
    I(J,J) = O(\varepsilon) + \frac{1}{2} \int_{j(a)}^{j(b)} \left[ Af'(s)^{2} + Bk^{2} f(s)^{2} + Bf'(s)^{2} + Ck^{2}f(s)^{2} - 4 k \omega_{0}(x_{0}) f(s)^{2}\right] \,ds.
  \end{equation}
  Minimizing this is now trivial; we just choose $f(s) = \sin{\left(\frac{\pi(s-j(a))}{j(b)-j(a)}\right)}$ and obtain
  \begin{equation*}
    I(J,J) = O(\varepsilon) + \frac{\pi}{4\Delta} \left( A\Delta^{2} + Bk^{2} + B\Delta^{2} + Ck^{2} - 4k\omega_{0}B\right),
  \end{equation*}
  where $\Delta = \frac{\pi}{j(b)-j(a)}$, and it remains only to choose the parameter $k$ to make this as small as possible, which is trivial: we have $k = \frac{2B\omega_{0}}{B+C}$. For
  this $k$ we get
  \begin{equation*}
    I(J,J) = O(\varepsilon) + \frac{\Delta^{2}(A+B)(B+C) - 4\omega_{0}^{2}B^{2}}{4\delta(B+C)} .
  \end{equation*}
  
  The only question remaining is how small we can make $R = \sqrt{(A+B)(B+C)}/(2B)$. It is easy to check that for
  $m=\sqrt{3}\pi/2$ the function $g(y) = \cos^{3}(x/\sqrt{3})$ has $g(m)=g'(m)=g''(m)=0$ and that
  $R = 4/3$ in this case; a slight smoothing of $g$ to make it supported in $(-m,m)$ will not substantially change
  $A$, $B$, or $C$, so we can come as close to $4/3$ as desired. It is likely that there are sharper estimates
  for this minimum, which appears for example in Mitrinović~\cite[Section 2.2.3]{Mit1970}; it is easy to see that $R\ge 1$ among functions that are supported in an interval $(-m,m)$ regardless of $m$.
\end{proof}

We want to finish this section with comments on open questions and future research directions:
\begin{itemize}
  \item The metric treated in this article presents the second example of a metric with a smooth exponential map that is not Fredholm, the only other one being the $L^{2}$-metric on $\Diff_{\mu}(M^{3})$, the volume-preserving diffeomorphism group of a three-dimensional manifold~\cite{EMP2006}. This geometric similarity further suggests that the Wunsch equation is a good one-dimensional model of the 3D Euler equation. We do not know if there are any other Euler-Arnold equations which have exponential maps with similar properties, but we suspect they would also be good models.
  \item For $s>\frac12$ the exponential map of the $H^{s}$--metric is a non-linear Fredholm map, see~\cite{MP2010}. In~\cite{BBHM2013} it is proven that $s=\frac12$ is also the critical index for positive/vanishing geodesic distance. In this section we have proven that these two geometric properties have indeed the same behavior at the critical index $s=\frac12$. It is an open question to investigate the connections between these two results. This could yield a pathway for a complete characterization of positive/vanishing geodesic distance for fractional order Sobolev metrics on diffeomorphism groups of higher dimensional manifolds.
  \item Lack of Fredholmness makes it easy to construct ``local shortcuts'' in the diffeomorphism group. Although the 3D volumorphism group has nonvanishing geodesic distance due to being a submanifold of an ambient space with positive distance~\cite{EM1970}, there is a bound for the intrinsic distance in terms of the extrinsic distance in 3D due to Shnirelman~\cite{Shn1994}, which forces finite diameter of $\Diff_{\mu}(M^{3})$, while no such result can be true in two dimensions~\cite{ER1991}. Are these properties related, and can one find a direct proof of vanishing distance or distance bounds using non-Fredholmness?
\end{itemize}

\section{Blowup}
\label{sec:blowup}

In this section we focus on a detailed analysis of blowup for equation \eqref{main}. Okamoto-Sakajo-Wunsch~\cite{OSW2008}
showed that the solution blows up at time $T$ if and only if $\int_{0}^{T} \norm{u_{x}(t,x)}_{L^{\infty}} \, dt = \infty$.
We prove that this condition can be replaced with $\int_{0}^{T} \norm{\omega(t,x)}_{L^{\infty}} \, dt =\infty$,
using the method of Beale-Kato-Majda~\cite{BKM1984}, which we use to demonstrate a link to the existence of
infinitely many conjugate pairs along a blowup solution (as studied in~\cite{Pre2010}). A similar criterion was derived
by Wunsch~\cite{Wun2011} for a slightly different member of the modified Constantin-Lax-Majda family.

Castro and C\'{o}rdoba~\cite{CC2010} demonstrated that some solutions of \eqref{main} on the real
line blow up in finite time. We extend this result to the periodic case and
demonstrate a larger class of blowup solutions by using an interesting pointwise inequality for the Hilbert transform.
Along the way we show that equation \eqref{main} takes a particularly simple form in Lagrangian coordinates,
where it looks like an equation arising in the study of the blowup for the 3D axisymmetric Euler equations of an ideal
fluid~\cite{PS2015}.

\subsection{The Beale-Kato-Majda criterion}
\label{sec:BKM}

Beale, Kato, and Majda~\cite{BKM1984} proved that the Euler equations on $\RR^{3}$,
\begin{equation}
  \frac{\partial \omega}{\partial t} + [u,\omega] = 0, \qquad \omega = \curl{u}, \quad \diver{u} = 0,
\end{equation}
with divergence-free initial condition $u(0)=u_{0}$ in $H^{q}$ for $q\ge 3$, has a solution in $H^{q}$ on $[0,T]$ if and only
if
\begin{equation}\label{BKM}
  \int_{0}^{T} \norm{\omega(t)}_{L^{\infty}} \, dt < \infty,
\end{equation}
as a consequence of the easier-to-prove criterion $\int_{0}^{T} \norm{u}_{C^{1}} \, dt < \infty$. The condition \eqref{BKM} is easier
to understand since it involves only the vorticity, which is transported by the flow $\eta$ via $\omega\left(t,\eta(t,x)\right) = D\eta(t,x) \omega_{0}(x)$.

In the present situation, Okamoto et al.~\cite{OSW2008} proved that smooth solutions of \eqref{main}
exist on $[0,T]$ if and only if $\int_{0}^{T} \norm{u}_{C^{1}}\,dt<\infty$, and our goal now is to
prove that the same criterion \eqref{BKM} works in the present situation, in terms of the momentum
$\omega = Hu_{x}$ (which should be thought of as essentially a one-dimensional version of the curl operator.)
This is not automatic since the Hilbert transform is not bounded as an operator in $L^{\infty}$ (for example
the Hilbert transform of a step function has a logarithmic singularity).
The technique is similar to that of~\cite{BKM1984}, though of course simpler in the one-dimensional
compact case.

\begin{theorem}\label{BKMHilbertthm}
  Suppose $u_{0}$ is an $H^{q}$ vector field for $q\ge 2$. Then a solution of \eqref{main} exists in $H^{q}$ on
  a time interval $[0,T]$ if and only if
  \begin{equation}\label{BKMHilbert}
    \int_{0}^{T} \norm{\omega(t)}_{L^{\infty}} < \infty.
  \end{equation}
\end{theorem}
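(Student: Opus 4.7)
The plan is to combine the blow-up criterion of Okamoto-Sakajo-Wunsch with a logarithmic Sobolev inequality for the Hilbert transform, in the spirit of the original Beale-Kato-Majda argument.

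\textbf{Easy direction.} First I would verify that if a solution $u \in H^q$ exists on $[0,T]$, then $\omega = Hu_x \in H^{q-1}$ (since $H$ is bounded on $H^s$ for every $s$), and since $q\ge 2$ the Sobolev embedding $H^{q-1}(S^1) \hookrightarrow L^\infty(S^1)$ yields $\int_0^T \norm{\omega(t)}_{L^\infty}\,dt < \infty$ immediately from continuity of $t\mapsto \|u(t)\|_{H^q}$.

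\textbf{Hard direction.} By the Okamoto-Sakajo-Wunsch criterion, it suffices to prove that \eqref{BKMHilbert} implies $\int_0^T \norm{u_x(t)}_{L^\infty}\,dt < \infty$ and that $\|u(t)\|_{H^q}$ remains bounded up to time $T$. The obstruction is that $H$ fails to be bounded on $L^\infty$, so one cannot directly invert $\omega = Hu_x$ in $L^\infty$ even up to constants; one must pay a logarithmic price in a higher Sobolev norm. Concretely, I would prove a one-dimensional analogue of the BKM log inequality, namely
\begin{equation*}
  \norm{u_x}_{L^\infty} \le C_0\Bigl(1 + \norm{\omega}_{L^\infty}\bigl(1 + \log^{+}\bigl(1+\norm{u}_{H^q}\bigr)\bigr) + \norm{u}_{L^2}\Bigr),
\end{equation*}
for some constant $C_0$ depending only on $q>3/2$. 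This is proved by a Littlewood-Paley/Fourier decomposition: split $u_x = -H\omega$ (up to a constant involving $\hat u_0$) into low frequencies $\abs{n}\le N$ and high frequencies $\abs{n}>N$. The low-frequency part is controlled by $\norm{\omega}_{L^\infty}\log N$ using the fact that $H$ applied to a frequency-localized function gains only a logarithm in $L^\infty$, while the high-frequency part is controlled by $N^{-(q-1)}\norm{u}_{H^q}$ via Bernstein. Optimizing in $N$ yields the stated inequality.

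\textbf{Energy estimate.} Next I would derive the standard $H^q$ energy estimate
\begin{equation*}
  \frac{d}{dt}\norm{u(t)}_{H^q}^2 \le C\,\norm{u_x(t)}_{L^\infty}\,\norm{u(t)}_{H^q}^2,
\end{equation*}
by applying $\partial_x^q$ to \eqref{main}, pairing with $\Lambda\partial_x^q u$ in $L^2$ and using Kato-Ponce/Moser-type commutator estimates. The key algebraic observation is that $\omega = Hu_x$ is a nonlocal but order-one symbol, so the equation has the same structural transport-plus-stretching form as a 1D model of 3D Euler, and the usual commutator arguments go through with $\norm{u_x}_{L^\infty}$ as the controlling quantity.

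\textbf{Gronwall/Osgood.} Finally, combining the two estimates and writing $y(t) = 1 + \norm{u(t)}_{H^q}^2$, I obtain
\begin{equation*}
  y'(t) \le C_1\bigl(1 + \norm{\omega(t)}_{L^\infty}\bigl(1 + \log y(t)\bigr)\bigr)\,y(t),
\end{equation*}
which by Osgood's lemma yields a double-exponential bound of the form
\begin{equation*}
  y(t) \le \bigl(1+y(0)\bigr)^{\exp\bigl(C_1 t + C_1\int_0^t\norm{\omega(s)}_{L^\infty}\,ds\bigr)}.
\end{equation*}
Under the assumption \eqref{BKMHilbert} this keeps $\norm{u(t)}_{H^q}$ bounded on $[0,T]$; applying the log inequality again then gives $\int_0^T\norm{u_x(t)}_{L^\infty}\,dt < \infty$, and the Okamoto-Sakajo-Wunsch criterion lets us extend the solution past $T$ in $H^q$.

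\textbf{Main obstacle.} The only delicate step is the logarithmic Hilbert transform inequality. All the other ingredients are routine one-dimensional analogues of the 3D Euler argument, but the failure of $H\colon L^\infty\to L^\infty$ is precisely what makes the theorem nontrivial, and quantifying that failure with only a logarithmic loss in a higher Sobolev norm is the heart of the proof.
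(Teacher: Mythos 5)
Your proposal is correct and follows the same overall architecture as the paper's proof: a logarithmic interpolation inequality quantifying the failure of $H$ on $L^{\infty}$, an energy estimate controlled by $\norm{u_{x}}_{L^{\infty}}$, and a log-Gronwall step yielding a double-exponential bound, all fed into the Okamoto--Sakajo--Wunsch continuation criterion. The one genuine difference is in the key lemma: you prove the logarithmic inequality by Littlewood--Paley decomposition, using that the truncated Hilbert kernel (conjugate Dirichlet kernel) has $L^{1}$-norm $\sim \log N$ together with Bernstein for the high frequencies, whereas the paper works directly with the integral kernel $\cot(y/2)$, splitting at a scale $\rho$, integrating by parts near the singularity, applying Cauchy--Schwarz, and optimizing $\rho = \min\{1,\norm{f'}_{L^{2}}^{-2}\}$; this gives $\norm{Hf}_{L^{\infty}} \le C(1+\log\norm{f'}_{L^{2}})(\norm{f}_{L^{\infty}}+1)$, i.e.\ a loss only in $\norm{u_{xx}}_{L^{2}}$ rather than $\norm{u}_{H^{q}}$. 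Both routes are valid (your Bernstein exponent should be $N^{-(q-3/2)}$ rather than $N^{-(q-1)}$, but this is immaterial since $q\ge 2$). The only place you are too glib is the energy estimate at the general $H^{q}$ level: the Kato--Ponce commutator $[\partial_{x}^{q-1},u]\omega_{x}$ produces a term of the form $\norm{u}_{H^{q-1}}\norm{\omega_{x}}_{L^{\infty}}$, and $\norm{\omega_{x}}_{L^{\infty}}=\norm{Hu_{xx}}_{L^{\infty}}$ is \emph{not} controlled by $\norm{u_{x}}_{L^{\infty}}$, so the clean inequality $\tfrac{d}{dt}\norm{u}_{H^{q}}^{2}\le C\norm{u_{x}}_{L^{\infty}}\norm{u}_{H^{q}}^{2}$ is not "routine" for all $q$. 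The paper avoids this by closing the Gronwall loop at the $H^{2}$ level, where the explicit identity $\tfrac{d}{dt}\int u_{xx}^{2}\,dx = -3\int u_{x}\omega_{x}^{2}\,dx - 2\int u_{x}u_{xx}^{2}\,dx$ (from Okamoto--Sakajo--Wunsch) exhibits the needed cancellation, and only then bootstrapping to higher norms; you should do the same, i.e.\ run your Osgood argument with $y(t)=1+\norm{u(t)}_{H^{2}}^{2}$ first and then upgrade.
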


\begin{proof}
  We first want to obtain a bound of the form
  \begin{equation}\label{basicbound}
    \norm{Hf}_{L^{\infty}} \le C\left[1+\log{\norm{f'}_{L^{2}}}\right] \left[ \norm{f}_{L^{\infty}} + 1\right]
  \end{equation} for some constant $C$ and every smooth function $f\colon S^{1}\to \RR$. For simplicity
  assume the maximum of $\abs{(Hf)(x)}$ occurs at $x=0$; then we have
  \begin{equation*}
    \norm{Hf}_{L^{\infty}} = \abs{Hf(0)} = \frac{1}{2\pi} \abs{\int_{0}^{2\pi} f(y) \cot{\frac{y}{2}} \, dy}.
  \end{equation*}
  First split the interval into $[-\rho,\rho]$ and $[\rho,2\pi-\rho]$ by periodicity, for a $\rho\in (0,1)$ to be chosen later.
  We integrate by parts near the singularity at $y=0$ to obtain
  \begin{equation*}
    \lvert Hf(0)\rvert \le \frac{1}{\pi}\abs{\log{\sin{\frac{\rho}{2}} }} \big\lvert f(\rho)-f(-\rho)\big\rvert
    - \frac{2}{\pi} \int_{-\rho}^{\rho} \abs{ f'(y)} \log{\abs{\sin{\frac{y}{2}}}} \, dy
    + \frac{1}{\pi} \int_{\rho}^{2\pi -\rho} \abs{ f(y)} \abs{ \cot{\frac{y}{2}}} \, dy.
  \end{equation*}
  The first and third terms are bounded in terms of $\norm{f}_{L^{\infty}}$, while the middle term can be bounded using
  Cauchy-Schwarz in terms of $\norm{f'}_{L^{2}}$, using
  \begin{equation*}
    \int_{-\rho}^{\rho} \log{\abs{ \sin{\frac{y}{2}}}}^{2} \, dy \le 2\int_0^{\rho} (\log{\tfrac{y}{2}})^2 \, dy = 2\rho \big[2-2\log{\tfrac{\rho}{2}}+(\log{\tfrac{\rho}{2}})^2\big] \le 8 \rho \big[\log{\rho}-1\big]^{2}
  \end{equation*}
  for $\rho\le 1$.
  We obtain
  \begin{equation*}
    \abs{Hf(0)} \le C(1-\log{\rho}) \left[ \norm{f}_{L^{\infty}} + \sqrt{\rho} \norm{f'}_{L^{2}}\right].
  \end{equation*}
  Now choose $\rho = \min\{1, \norm{f'}_{L^{2}}^{-2}\}$ to obtain \eqref{basicbound}.
  
  As shown in~\cite{OSW2008}, if $u$ is a solution of \eqref{main}, then
  \begin{equation}\label{H2estimate}
    \frac{d}{dt}\int_{S^{1}} u_{xx}^{2} \, dx = -3\int_{S^{1}} u_{x}\omega_{x}^{2} \, dx - 2\int_{S^{1}} u_{x} u_{xx}^{2}\,dx \le 5\norm{u_{x}}_{L^{\infty}} \int_{S^{1}} u_{xx}^{2} \,dx,
  \end{equation}
  so that \eqref{basicbound} with $f = \omega = Hu_{x}$
  thus yields
  \begin{equation*}
    \frac{d}{dt} \log{\norm{u_{xx}}}_{L^{2}} \le C (1+\log{\norm{u_{xx}}_{L^{2}}}) \left( 1 + \norm{\omega}_{L^{\infty}}\right),
  \end{equation*}
  and the solution of this inequality is
  \begin{equation*}
    \log{\Big(1+\log{\norm{u_{xx}}}_{L^{2}}\Big)} \le C' + C \left( t + \int_{0}^t \norm{\omega(\tau)}_{L^{\infty}}\,d\tau \right).
  \end{equation*}
  Thus a bound on $\int_{0}^{T} \norm{\omega(\tau)}_{L^{\infty}} \, d\tau$ implies a double-exponential bound on the Sobolev norm
  $\norm{u(t)}_{H^{2}}$.
  
  Similar estimates as in \eqref{H2estimate} are easy to derive for higher Sobolev norms of $u$, with only $\norm{u_{x}}_{L^{\infty}}$
  being relevant, and thus we get bounds on the growth of $\norm{u(t)}_{H^{q}}$ for all $q\ge 2$ in terms solely of
  $\int_{0}^t \norm{\omega(t)}_{L^{\infty}}$.
\end{proof}

\begin{remark}\label{shrinking}
  Note that the first part of equation \eqref{H2estimate} implies that growth of the norm happens only when $u_{x}<0$, so in fact to bound the $H^{2}$-norm of $u$, it is sufficient to obtain a bound on $(u_{x})_-$, the negative part of $u_{x}$. Since particle trajectories satisfy the flow equation $\eta_{t}(t,x) = u\left(t,\eta(t,x)\right)$, we see that $u_{x}\left(t,\eta(t,x)\right) = \frac{\partial}{\partial t} \log{\eta_{x}(t,x)}$; thus in the Lagrangian analysis of blowup in the next section, we need only be concerned with $\eta_{x}$ approaching zero (rather than infinity). An alternate way to understand this is using the conservation law \eqref{conservation-law}, in the form
  \begin{equation*}
    \omega\left(t,\eta(t,x)\right) = \omega_{0}(x)/\eta_{x}(t,x)^{2},
  \end{equation*}
  to see that if $\eta_{x}(t,x)\ge a(t)$ for all $x$, then
  \begin{equation*}
    \norm{\omega(t)}_{L^{\infty}} \le \norm{\omega_{0}}_{L^{\infty}}/a(t)^{2},
  \end{equation*}
  so the blowup condition is that
  $\int_{0}^{T} dt/a(t)^{2} = \infty$; no upper bound on $\eta_{x}$ is needed.
  
  In three-dimensional fluids the stretching map $D\eta$ always has determinant one, so although here stretching is more important,
  stretching must always be accompanied by shrinking in another direction.
\end{remark}

\subsection{Proof of blowup}

In this section we demonstrate finite-time blowup of solutions to \eqref{main}; we imitate
the technique of Castro and C\'{o}rdoba~\cite{CC2010}, although the situation simplifies here
due to periodicity since we are able to work in terms of Fourier coefficients. In addition
we are able to generalize the method to deal with any initial data such that $u_{0}'(0)<0$ and
$(Hu_{0})'(0)=0$, while previously this was only known to work if $u_{0}$ was an odd function. In
fact the inequality $H(fHf'')+ff''\le 0$ we require is fairly easy to generalize to higher
derivatives, as in Theorem~\ref{signlemma}, which we hope will be useful in other applications.

To prove the result, we need to estimate a nonlocal term. Fortunately properties of the Hilbert transform
make this relatively elementary, especially using the Fourier series. Castro and C\'{o}rdoba~\cite{CC2010}
proved that if $f$ is an odd function on the line, then $H(fHf'')(0)<0$, using Mellin transforms. In our
case the proof uses Fourier coefficients, and we are able to get a stronger result, which also generalizes
a result due to C\'{o}rdoba-C\'{o}rdoba~\cite{CC2003}.

\begin{theorem}\label{signlemma}
  Suppose $f\colon S^{1}\to \RR$ is a function with Fourier series $f(x) = \sum_{n\in\mathbb{Z}} c_{n} e^{inx}$.
  Let $H$ denote the Hilbert transform, and set $\Lambda = H\partial_{x}$,
  so that $\Lambda(e^{inx}) = \abs{n} e^{inx}$ for every $n\in\mathbb{Z}$.
  For any positive number $p$, define $g_{p} = H(fH\Lambda^{p}f) + f\Lambda^{p}f$.
  Then for every $x\in S^{1}$ we have
  \begin{equation}\label{magicinequality}
    g_{p}(x) = 2\sum_{k=1}^{\infty} \left[ k^{p} - (k-1)^{p}\right] \abs{\phi_k(x)}^{2}, \qquad \text{where $\phi_k(x) = \sum_{m=k}^{\infty} c_m e^{imx}$.}
  \end{equation}
  In particular every $g_{p}$ is nonnegative and strictly positive if $f$ is nonconstant.
\end{theorem}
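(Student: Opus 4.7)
The plan is to verify the identity by computing and matching Fourier coefficients on both sides. Write $f(x) = \sum_{n\in\ZZ} c_n e^{inx}$ with $c_{-n} = \overline{c_n}$ since $f$ is real. Recalling that $\Lambda^p$ has symbol $|n|^p$ and $H$ has symbol $-i\sgn(n)$, a direct expansion of each factor gives
\begin{equation*}
g_{p}(x) = \sum_{m,n\in\ZZ} c_m c_n |n|^{p} \bigl[1 - \sgn(n)\sgn(m+n)\bigr]\, e^{i(m+n)x}.
\end{equation*}
Separately, the reality of $f$ together with the substitution $n = -l$ gives
\begin{equation*}
|\phi_k(x)|^{2} = \sum_{m\ge k,\, l\ge k} c_m\, \overline{c_l}\, e^{i(m-l)x} = \sum_{m\ge k,\, n\le -k} c_m c_n\, e^{i(m+n)x}.
\end{equation*}
Interchanging summations and applying the telescope $\sum_{k=1}^{N}[k^{p}-(k-1)^{p}] = N^{p}$ (valid since $p>0$) shows that the Fourier coefficient of $2\sum_{k\ge1}[k^{p}-(k-1)^{p}]|\phi_k|^{2}$ at frequency $j$ equals
\begin{equation*}
2\sum_{\substack{m\ge 1,\, n\le -1\\ m+n = j}} c_m c_n\, \min(m,-n)^{p}.
\end{equation*}

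Matching coefficients frequency by frequency is then a short case check. For $j>0$, the bracket in the first display vanishes unless $n\le -1$, in which case it equals $2$; writing $m = j-n \ge j+1$ gives $m > -n$, so $\min(m,-n)=-n=|n|$, and the two coefficients agree. The case $j<0$ is handled by the dummy-variable swap $(m,n)\mapsto(n,m)$, after which the constraint $m+n=j$ forces $\min(m,-n)=m$. The case $j=0$ follows directly from $|c_n|^{2}=|c_{-n}|^{2}$. Nonnegativity and the equality condition are then immediate, since $k^{p}-(k-1)^{p} > 0$ for every $k\ge 1$ when $p>0$, and the sum vanishes identically iff every $\phi_k\equiv 0$, iff $c_m = 0$ for all $m\ge 1$, iff (by reality of $f$) $f$ is constant.

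The main technical point is reconciling the asymmetric factor $|n|^{p}$ on the left-hand side with the symmetric $\min(m,-n)^{p}$ coming from the telescope on the right; the sign constraints inherited from the bracket are precisely what identify the smaller of $m$ and $-n$ with $|n|$ on positive frequencies (and with $m$ on negative ones), so that once the bookkeeping is in order the identity falls out by inspection.
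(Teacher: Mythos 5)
Your proposal is correct and follows essentially the same route as the paper: both rest on the bilinear Fourier symbol computation $g_p = \sum_{m,n} [1-\sgn(n)\sgn(m+n)]\,|n|^{p} c_m c_n e^{i(m+n)x}$ together with the telescoping identity $\sum_{k=1}^{N}[k^{p}-(k-1)^{p}]=N^{p}$. The only difference is one of direction and bookkeeping --- the paper reduces to $x=0$, splits the sum by the sign of $n$, and performs an Abel summation to arrive at the $\phi_k$ form, whereas you expand the claimed right-hand side and match coefficients frequency by frequency via $\min(m,-n)^{p}$ --- but this is the same argument run backwards.
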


\begin{proof}
  It is sufficient to prove \eqref{magicinequality} at $x=0$, since if $x$ were not zero we could just replace $c_{n}e^{inx}$
  everywhere with $\tilde{c}_{n}$. This corresponds to translation-invariance of the operators $H$ and $\Lambda$, and simplifies notation.
  
  We first compute
  \begin{equation*}
    (fH\Lambda^{p}f)(x) = \sum_{m,n\in \mathbb{Z}^{2}} c_m (-i\sgn{(n)})\abs{n}^{p} c_{n} e^{i(m+n)x},
  \end{equation*}
  then observe that
  \begin{equation*}
    H(fH\Lambda^{p}f)(0) = -\sum_{m,n\in \mathbb{Z}^{2}} \sgn{(n)} \sgn{(m+n)} \abs{n}^{p} c_m c_{n}.
  \end{equation*}
  Therefore we have
  \begin{equation*}
    g_{p}(0) = \sum_{m,n\in\mathbb{Z}^{2}} \left[1 - \sgn{(n)} \sgn{(m+n)}\right] \abs{n}^{p} c_m c_{n}.
  \end{equation*}
  Of course, when $n=0$ there is no contribution since $p>0$, so we can break up the sum into
  terms when $n>0$ and when $n<0$, obtaining
  \begin{equation}\label{firstbreak}
    g_{p}(0) = \sum_{n=1}^{\infty} n^{p} c_{n} \sum_{m\in\mathbb{Z}} \left[ 1- \sgn{(m+n)}\right] c_m + \sum_{n=1}^{\infty} n^{p} \overline{c_{n}} \sum_{m\in \mathbb{Z}} \left[ 1 + \sgn{(m-n)}\right] c_m = (I)+(II),
  \end{equation}
  where in the second sum we replaced $n$ with $-n$ and used the fact that $c_{-n} = \overline{c_{n}}$ since $f$ is real-valued.
  
  Now in the sum $(I)$ of \eqref{firstbreak} we have nonzero terms if and only if $m+n\le 0$, and thus we have
  \begin{equation*}
    (I) = \sum_{n=1}^{\infty} n^{p} c_{n} \left( c_{-n} + 2\sum_{m=-\infty}^{-n-1} c_m\right)  = \sum_{n=1}^{\infty} n^{p} \abs{c_{n}}^{2} + 2 \sum_{n=1}^{\infty} \sum_{m=n+1}^{\infty} n^{p} c_{n} \overline{c_m},
  \end{equation*}
  where in the last sum we replaced $m$ with $-m$. The same tricks applied to the second term $(II)$ of \eqref{firstbreak} give
  \begin{equation*}
    (II) = \sum_{n=1}^{\infty} n^{p} \abs{c_{n}}^{2} + 2 \sum_{n=1}^{\infty} \sum_{m=n+1}^{\infty} n^{p} \overline{c_{n}} c_m.
  \end{equation*}
  Plugging these both into \eqref{firstbreak} we get
  \begin{equation}\label{secondbreak}
    g_{p}(0) = 2\sum_{n=1}^{\infty} n^{p} \abs{c_{n}}^{2} + 2\sum_{n=1}^{\infty} \sum_{m=n+1}^{\infty} n^{p} (c_{n} \overline{c_m} + c_m \overline{c_{n}}).
  \end{equation}
  
  In terms of $\phi_k = \sum_{j=k}^{\infty} c_j$, equation \eqref{secondbreak} becomes
  \begin{align*}
    g_{p}(0) & = 2\sum_{n=1}^{\infty} n^{p} \abs{\phi_{n} - \phi_{n+1}}^{2} + 2\sum_{n=1}^{\infty} n^{p} \left[ (\phi_{n}-\phi_{n+1}) \overline{\phi_{n+1}} + \phi_{n+1} (\overline{\phi_{n}} - \overline{\phi_{n+1}})\right] \\
             & = 2\sum_{n=1}^{\infty} n^{p} \left[ \abs{\phi_{n}}^{2} - \abs{\phi_{n+1}}^{2}\right],                                                                                                                          
  \end{align*}
  which becomes \eqref{magicinequality} after summation by parts.
\end{proof}

In fact the only features we used of the power function $Q(\lambda) = \lambda^{p}$ are that $Q(\lambda)=0$
and $Q$ is increasing on $\RR^+$. Hence the same proof gives the inequality
\begin{equation*}
  H(fHg) + fg \ge 0 \qquad \text{if $g = Q(\Lambda)(f)$}
\end{equation*}
for any such $Q$, where the action of $Q$ on a function is determined by linearity and the action on the Fourier basis:
$Q(\Lambda)(e^{inx}) = Q(\abs{n}) e^{inx}$. Note also that by the general product formula
$(Hf)(Hg) - H(gHf) = H(fHg)+fg,$ we obtain for free the inequality
\begin{equation*}
  (Hf)(Hg)-H(gHf) \ge 0 \qquad \text{if $g=Q(\Lambda)(f)$.}
\end{equation*}

The proof above also works (and is in fact simpler) for functions defined on $\RR$ rather than on $S^{1}$,
using Fourier transforms rather than Fourier series, as we will show in Appendix~\ref{appendix:real_line}. We will only need the
special case $p=2$ in what follows, but we summarize the result for all integers $p$ below; note that since $H$
has period $4$ we essentially get four cases.

\begin{corollary}\label{signlemmacorollary}
  For any function $f\colon S^{1}\to\RR$ we have the following pointwise inequalities:
  \begin{alignat*}{3}
    -H(ff') + fHf'    & \ge 0, & \qquad & -H(fHf'') - ff''                  & \ge 0, \\
    H(ff''') - fHf''' & \ge 0, & \qquad & H(fHf^{\imath v}) + ff^{\imath v} & \ge 0. 
  \end{alignat*}
  The same inequalities are valid when the order of the derivatives are replaced by any positive
  integer which is equal modulo $4$.
\end{corollary}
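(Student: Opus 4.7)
The plan is to derive the corollary as a direct consequence of Theorem \ref{signlemma} by computing $\Lambda^p = (HD_x)^p$ explicitly for integer $p$ and substituting into the master inequality $H(fH\Lambda^p f) + f\Lambda^p f \ge 0$.

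The key ingredient is that on $S^1$ the Hilbert transform commutes with $D_x$ (both are Fourier multipliers) and satisfies $H^2 = -\mathrm{Id}$ on mean-zero functions while $H$ annihilates constants. Since every derivative of a periodic function is automatically mean-zero, expanding $\Lambda^p = (HD_x)^p$ and collapsing adjacent $H^2$'s yields
\begin{align*}
\Lambda f &= Hf', & \Lambda^2 f &= -f'', \\
\Lambda^3 f &= -Hf''', & \Lambda^4 f &= f^{iv},
\end{align*}
and in general the pattern repeats with period $4$ in $p$, so that $\Lambda^{p+4}f$ just replaces each derivative by four more derivatives of $f$.

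Substituting these four cases into $g_p(x) = H(fH\Lambda^p f) + f\Lambda^p f \ge 0$ (Theorem \ref{signlemma}) gives exactly the four displayed inequalities. For instance, with $p=2$ we have $\Lambda^2 f = -f''$, hence
\begin{equation*}
g_2 = H\bigl(fH(-f'')\bigr) + f(-f'') = -H(fHf'') - ff'' \ge 0,
\end{equation*}
and similarly for $p=1,3,4$, where in the $p=1$ and $p=3$ cases one uses $H^2 = -\mathrm{Id}$ once more to eliminate the outer Hilbert transform inside the first argument (noting again that $Hf'$ and $Hf'''$ are mean-zero). The $4$-periodicity claim then follows because replacing $p$ by $p+4$ merely applies an extra factor of $\Lambda^4 = D_x^4$ inside, turning the same inequality into the one involving derivatives of order four higher.

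There is no real obstacle here beyond bookkeeping; the only point requiring care is that $H^2 = -\mathrm{Id}$ is used only on derivatives (which are mean-zero), so no stray constant terms spoil the identities. Since Theorem \ref{signlemma} already provides the positivity, the corollary is essentially a translation of Fourier-multiplier identities.
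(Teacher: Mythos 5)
Your proposal is correct and follows essentially the same route as the paper, which likewise reduces the corollary to Theorem~\ref{signlemma} by writing $\Lambda^{p}=H^{p}\partial_{x}^{p}$ and collapsing $H^{2}=-\mathrm{Id}$ according to $p$ modulo $4$. Your extra care about $H^{2}=-\mathrm{Id}$ holding only on mean-zero functions (automatic for derivatives of periodic functions) is a correct and worthwhile clarification of a point the paper leaves implicit.
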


\begin{proof}
  We just use $H^{2} = -1$ and break up $\Lambda^{p} = H^{p} \partial_{x}^{p}$ depending on $p$ modulo $4$.
\end{proof}

The first inequality $-H(ff') + fHf' \ge 0$ is equivalent to the case $\alpha=1$ of the pointwise inequality
in~\cite{CC2003}, which takes the form $\Lambda(f^{2}) \le 2f\Lambda f$ in terms of $\Lambda = H\partial_{x}$.
The fact that this inequality can be generalized to any $\alpha \in [0,2]$ and to any convex function, as
well as to higher dimensions (see C\'{o}rdoba-Mart\'inez~\cite{CM2015}) suggests that Theorem~\ref{signlemma}
might be generalized to higher dimensions as well, for example replacing the Hilbert transform by Riesz
transforms. But we have not been able to generalize the result using the present techniques.

Corollary~\ref{signlemmacorollary} and the special structure of \eqref{main} (as distinct from all other equations
in the ``modified Constantin-Lax-Majda'' family) allow us to obtain the following especially simple form
in Lagrangian flow coordinates.

\begin{theorem}\label{lagrangianform}
  Suppose $u$ and $\omega$ form a solution of \eqref{main} with $\int_{0}^{2\pi} u_{0}(x) \, dx = 0$. Let
  $\eta$ denote the Lagrangian flow of $u$ satisfying
  \begin{equation}\label{lagrangianflow}
    \eta_{t}(t,x) = u\left(t, \eta(t,x)\right), \qquad \eta(0,x) = x.
  \end{equation}
  Then $\eta_{x}$ satisfies the equation
  \begin{equation}\label{lagrangiangeodesic}
    \eta_{xtt}(t,x) = \frac{\omega_{0}(x)^{2}}{\eta_{x}(t,x)^{3}} - F\left(t,\eta(t,x)\right) \eta_{x}(t,x),
  \end{equation}
  where $F(t,x) = -uu'' - H(uHu'')$ is positive for all $t$ and $x$ by Corollary~\ref{signlemmacorollary}.
\end{theorem}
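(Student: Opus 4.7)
The strategy is to differentiate the flow equation $\eta_{t} = u\circ\eta$ twice in $t$ and once in $x$, and then rewrite the resulting Eulerian quantity in terms of $u$ and $\omega_{0}$ by using the Wunsch equation together with a Cotlar--type identity for the Hilbert transform. The chain rule gives $\eta_{tt} = (u_{t} + uu_{x})\circ\eta$, so
\begin{equation*}
\eta_{xtt} = \bigl((u_{t} + uu_{x})_{x}\circ\eta\bigr)\,\eta_{x} = \bigl((u_{xt} + u_{x}^{2} + uu_{xx})\circ\eta\bigr)\,\eta_{x},
\end{equation*}
and the task reduces to rewriting $u_{xt} + u_{x}^{2} + uu_{xx}$ in the form $-F + \omega^{2}$.

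For this, I would apply $-H$ to the Wunsch equation written as $Hu_{xt} + uHu_{xx} + 2(Hu_{x})u_{x} = 0$; every expression in sight has zero mean (being either a spatial derivative or an image of $H$), so $H^{2} = -\id$ applies and one obtains
\begin{equation*}
u_{xt} = H(uHu_{xx}) + 2H(u_{x}Hu_{x}).
\end{equation*}
The cancellation then comes from the Cotlar identity $2H(fHf) = (Hf)^{2} - f^{2}$, valid for any mean-zero $f\colon S^{1}\to\RR$; this is the $g=f$ case of $H(fHg+gHf) = (Hf)(Hg) - fg$ and can be verified directly by comparing Fourier coefficients, in the same spirit as the proof of Theorem~\ref{signlemma}. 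Applied to $f = u_{x}$, it gives $2H(u_{x}Hu_{x}) = \omega^{2} - u_{x}^{2}$, which exactly cancels the $u_{x}^{2}$ term in $(u_{t} + uu_{x})_{x}$ and leaves
\begin{equation*}
(u_{t} + uu_{x})_{x} = H(uHu'') + uu'' + \omega^{2} = -F + \omega^{2},
\end{equation*}
with $F = -uu'' - H(uHu'')$ pointwise nonnegative by Corollary~\ref{signlemmacorollary}.

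To finish, I would invoke the conservation law~\eqref{conservation-law}, which is derived purely from the transport form of the $\omega$--equation and therefore applies verbatim to the Wunsch equation: $\omega(t,\eta(t,x))\,\eta_{x}(t,x)^{2} = \omega_{0}(x)$. Squaring and substituting $\omega(t,\eta)^{2} = \omega_{0}(x)^{2}/\eta_{x}^{4}$ into $\eta_{xtt} = \bigl(-F(t,\eta) + \omega(t,\eta)^{2}\bigr)\eta_{x}$ yields~\eqref{lagrangiangeodesic} at once. The main obstacle is the cancellation step: unless one recognises the Cotlar identity behind $2H(u_{x}Hu_{x})$, one is stuck with a non-local expression with no obvious relation to $F$ or $\omega^{2}$. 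The factor $2$ in front of $\omega u_{x}$ in the Wunsch equation matches precisely the factor $2$ that appears in $2H(fHf) = (Hf)^{2} - f^{2}$ after polarization, so this clean Lagrangian form is genuinely special to the parameter value $a=-\tfrac{1}{2}$ of the generalized CLM family.
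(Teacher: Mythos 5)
Your proposal is correct and follows essentially the same route as the paper: apply $H$ to the Wunsch equation, use the identity $2H(fHf)=(Hf)^{2}-f^{2}$ with $f=u_{x}$ to cancel the $u_{x}^{2}$ term in $(u_{t}+uu_{x})_{x}$, and then substitute the conservation law \eqref{conservation-law} into $\eta_{xtt}=\bigl(\omega^{2}-F\bigr)\!\circ\eta\cdot\eta_{x}$. The only detail the paper adds is the observation that $F$ is \emph{strictly} positive because the mean-zero condition forces $u$ to be nonconstant (else $u\equiv 0$), which upgrades the nonnegativity from Corollary~\ref{signlemmacorollary}.
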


\begin{proof}
  Since $\omega = Hu_{x}$, the Hilbert transform of \eqref{main} is
  \begin{equation*}
    u_{tx} = H(u\omega_{x}) + 2 H(u_{x} Hu_{x}).
  \end{equation*}
  Using the Hilbert transform identity $2H(fHf) = (Hf)^{2} - f^{2}$ (valid as long as $f$ has vanishing mean value),
  this equation becomes
  $
  u_{tx} = H(uu_{xx}) + \omega^{2} - u_{x}^{2},
  $ which may be written in the form
  \begin{equation}\label{velocityderivative}
    u_{tx} + u_{x}^{2} + uu_{xx} = \omega^{2} - F.
  \end{equation}
  Now differentiate the flow equation in space to get
  \begin{equation}\label{spacederivativeflow}
    \eta_{tx}(t,x) = u_{x}\left(t,\eta(t,x)\right) \eta_{x}(t,x),
  \end{equation}
  and differentiating this in time gives
  \begin{equation}\label{flow3derivative}
    \begin{split}
      \eta_{ttx}(t,x) = \left(u_{tx} + u_{x}^{2} + u u_{xx}\right)\left(t,\eta(t,x)\right) \eta_{x}(t,x).
    \end{split}
  \end{equation}
  Composing \eqref{velocityderivative} with $\eta$, we therefore see that it can be written in the form
  \begin{equation}\label{flow3almostdone}
    \eta_{ttx}(t,x) = \omega\left(t,\eta(t,x)\right)^2 \eta_{x}(t,x) - F\left(t,\eta(t,x)\right) \eta_{x}(t,x).
  \end{equation}
  
  Plugging the vorticity conservation law
  \eqref{conservation-law} into \eqref{flow3almostdone} gives \eqref{lagrangiangeodesic}.
  The fact that $F$ is always positive comes from the fact that $u$ can never be constant, since
  $\int_{0}^{2\pi} u(t,x)\,dx = 0$ for all $t$; if $u$ were constant at any time, then $u$ would have
  to be identically zero at that time, which would mean $u$ is identically zero for all time. Since
  $u$ is nonconstant, $F$ is strictly positive.
\end{proof}

Our first blowup result is now quite easy in this Lagrangian form.

\begin{theorem}\label{blowuptheorem}
  Suppose $u$ and $\omega$ form a solution of \eqref{main} where $u_{0}$ and $\omega_{0} = H\partial_{x} u_{0}$ satisfy
  $\omega_{0}(x_{0})=0$ and $u_{0}'(x_{0})<0$ for some $x_{0}\in S^{1}$, with $\int_{0}^{2\pi} u_{0}(x)\,dx=0$. Let $\eta$
  be the Lagrangian flow of $u$.
  Then $\eta_{x}(t,x_{0})$ reaches zero in finite time $T < 1/\abs{u_{0}'(x_{0})}$, and thus
  $u_{x}\left(t,\eta(t,x_{0})\right)$ reaches negative infinity at the same time.
\end{theorem}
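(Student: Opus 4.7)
The plan hinges entirely on Theorem~\ref{lagrangianform}: evaluating the Lagrangian ODE \eqref{lagrangiangeodesic} at $x = x_{0}$ and using the hypothesis $\omega_{0}(x_{0}) = 0$ eliminates the singular repulsive term $\omega_{0}(x)^{2}/\eta_{x}^{3}$, leaving only the non-positive restoring term. This reduces the nonlocal PDE question to a standard concavity estimate for a second-order ODE along one trajectory.

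Concretely, I would set $\rho(t) := \eta_{x}(t, x_{0})$. Theorem~\ref{lagrangianform} then gives
\begin{equation*}
  \ddot{\rho}(t) = -F\left(t, \eta(t,x_{0})\right)\,\rho(t),
\end{equation*}
where $F \ge 0$ pointwise by Corollary~\ref{signlemmacorollary}, and in fact $F > 0$ strictly: $u(t,\cdot)$ has mean zero for all $t$ by Lemma~\ref{meanzerolemma}, and it cannot be identically zero (otherwise $u_{0} \equiv 0$ contradicts $u_{0}'(x_{0}) < 0$), so $u(t,\cdot)$ is nonconstant at every $t$, and Theorem~\ref{signlemma} with $p = 2$ then forces $F > 0$. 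The initial conditions are read off from the flow equation: $\rho(0) = 1$, and differentiating \eqref{lagrangianflow} in $x$ at $t = 0$ gives $\dot{\rho}(0) = u_{0}'(x_{0}) < 0$.

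While $\rho$ remains positive we have $\ddot{\rho} < 0$, so $\rho$ is strictly concave on $[0, T)$, where $T$ is its first zero. Strict concavity together with the initial data yields $\rho(t) < 1 + u_{0}'(x_{0})\,t$ for every $t \in (0, T)$, and the right-hand side vanishes at $t = 1/\abs{u_{0}'(x_{0})}$; hence $T < 1/\abs{u_{0}'(x_{0})}$. For the blowup of $u_{x}$, differentiating the flow equation \eqref{lagrangianflow} in $x$ gives $u_{x}\left(t, \eta(t,x_{0})\right) = \dot{\rho}(t)/\rho(t)$; since $\ddot{\rho} \le 0$, $\dot{\rho}$ is non-increasing, so $\dot{\rho}(t) \le u_{0}'(x_{0}) < 0$ throughout $[0, T)$. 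Combining this with $\rho(t) \to 0^{+}$ as $t \to T^{-}$ gives $u_{x}\left(t, \eta(t,x_{0})\right) \to -\infty$.

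The main conceptual work was already done in Theorem~\ref{lagrangianform} and Theorem~\ref{signlemma}; once those are in hand, the only subtlety remaining is confirming strict concavity (as opposed to mere concavity) in order to upgrade $T \le 1/\abs{u_{0}'(x_{0})}$ to a strict inequality, but the mean-zero hypothesis combined with Theorem~\ref{signlemma} takes care of this immediately.
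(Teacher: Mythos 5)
Your proposal is correct and follows essentially the same route as the paper: evaluate the Lagrangian equation \eqref{lagrangiangeodesic} at $x_{0}$, use $\omega_{0}(x_{0})=0$ to kill the repulsive term, and run a strict-concavity argument on $\rho(t)=\eta_{x}(t,x_{0})$ with $\rho(0)=1$, $\dot{\rho}(0)=u_{0}'(x_{0})<0$. The only cosmetic difference is the last step: the paper invokes $\eta_{x}=\exp\left(\int_{0}^{t}u_{x}\circ\eta\,d\tau\right)$, whereas you use $u_{x}\circ\eta=\dot{\rho}/\rho$ with $\dot{\rho}\le u_{0}'(x_{0})<0$, which gives the same (in fact a slightly cleaner, genuinely pointwise) conclusion.
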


\begin{proof}
  Since $\omega_{0}(x_{0})=0$, the conservation law \eqref{conservation-law} implies that $\omega\left(t,\eta(t,x_{0})\right) = 0$
  for all time. Equation \eqref{lagrangiangeodesic} thus implies that $\eta_{ttx}(t,x_{0}) < 0$ as long as $\eta_{x}(t,x_{0})>0$.
  Hence the function $\phi(t)=\eta_{x}(t,x_{0})$ has $\phi(0)=1$, $\phi'(0)=u_{x}(0,x_{0})<0$, and $\phi''(t)<0$, so that $\phi(t)$ must reach
  zero in finite time. Since equation \eqref{spacederivativeflow} implies
  \begin{equation*}
    \eta_{x}(t,x_{0}) = \exp{\left( \int_{0}^t u_{x}\left(\tau,\eta(\tau,x_{0})\right) \, d\tau\right)},
  \end{equation*}
  we must have $u_{x}\left(t,\eta(t,x_{0})\right)$ approaching negative infinity as well.
\end{proof}

Equation \eqref{lagrangiangeodesic} is exactly the Ermakov-Pinney equation~\cite{LA2008} for $r(t) = \eta_{x}(t,x)$ with
$\Omega(t)^{2} = F(t,\eta(t,x))$ and $c = \omega_{0}(x)$; that is,
\begin{equation}\label{ermakovpinney}
  r''(t) + \Omega(t)^{2} r(t) = \frac{c^{2}}{r(t)^{3}}.
\end{equation}
As pointed out by Eliezer and Gray~\cite{EG1976}, this equation appears when describing the motion of a time-dependent
harmonic oscillator in the plane, with a returning force always pointing
toward the origin. If we consider the system $\ddot{\mathbf{r}}(t) + \Omega(t)^{2} \mathbf{r}(t) = 0$
for a vector $\mathbf{r}(t)\in \RR^{2}$, then $r(t) = \norm{\mathbf{r}(t)}$ solves
\eqref{ermakovpinney}, where $c$ is the (constant) angular momentum.

Since $\eta(0,x)=x$ for all $x$, we always have initial condition $r(0)=1$; that is, the
``particle'' begins on the unit circle with some initial radial velocity (which could be positive
or negative) and some initial angular momentum (which is preserved). If the angular momentum
is zero, then the particle remains on a line between the origin and its starting position on
the circle. If it is initially heading inward (as in the situation of Theorem~\ref{blowuptheorem})
then it reaches the origin in finite time. If it heads outward but not too fast, it is likely that
the (nonlocal) returning force may still send it to the origin, but we cannot tell yet without sharp
bounds on $F = -uu'' - H(uHu'')$ as given by Lemma~\ref{signlemma}. Note that in this case it is not
necessary that $\Omega(t)$ actually approaches infinity in some finite time, only that it remains positive,
for blowup to occur. If $\Omega(t)$ remains finite, then when $r(t)$ approaches zero we must have $r'(t)$
approaching some nonzero value, which means $r(t) \approx C(T-t)$ for some $C\ne 0$ as $t\to T$. We thus
have $\int_{0}^{T} \frac{dt}{\eta_{x}(t,x_{0})^{2}} = \infty$ as we might expect from Theorem~\ref{blowuptheorem}.

At present we can only conjecture that blowup can be localized, in the sense that a solution should blow up
if and only if there is some $x_{0}\in S^{1}$ such that
\begin{equation}\label{localizedblowup}
  \int_{0}^{T} \abs{\omega\left(t,\eta(t,x_{0})\right)} \, dt = \abs{\omega_{0}(x_{0})} \int_{0}^{T} \frac{dt}{\eta_{x}(t,x_{0})^{2}} = \infty.
\end{equation}
(We do not need the maximum of $\omega$ to actually be reached along a particular Lagrangian path, only that
some path carries enough of the vorticity stretching to still give infinity here.) Note that the scenario
of Theorem~\ref{blowuptheorem} does not quite work here since $\omega_{0}(x_{0}) = 0$; however one would expect
that if $\omega_{0}'(x_{0})\ne 0$, then any point near $x_{0}$ would still give infinity.

On the other hand, if there is any angular momentum, then the particle spins around the origin, and the
localized blowup criterion \eqref{localizedblowup} is exactly the condition that $c \int_{0}^{T} dt/r(t)^{2} = \infty$.
Since in polar coordinates we have $\theta'(t) = c/r(t)^{2}$ by conservation of angular momentum, we can have
blowup along such a trajectory at time $T$ if and only if the corresponding planar system has a solution that completes
infinitely many turns around the origin before time $T$. Such a system necessarily requires the force $\Omega(t)$
to be approaching infinity as $t\to T$ already.

This analysis is \emph{exactly} what happens for the 3D axisymmetric Euler equations with swirl when considering the flow
near the axis of symmetry (and perhaps more generally), as shown by Sarria and the third author~\cite{PS2015}.
There the vorticity conservation law also translates into a constant angular velocity in the Ermakov-Pinney equation,
and the central force corresponds to the second radial derivative of the pressure. It is conjectured but not yet
known if the pressure has a local minimum on the axis of symmetry, but if it did this would correspond to the
``magical'' inequality of Lemma~\ref{signlemma} that makes the force always point inwards. This fact is what
makes us convinced that \eqref{main} is the best one-dimensional model of the 3D Euler equation.

Finally we note here that the localized blowup criterion \eqref{localizedblowup}
is exactly the condition needed in Theorem~\ref{conjugateexistence} to obtain an infinite sequence
$(0,t_{1},t_2,\ldots)$ with $t_k\nearrow T$ such that $\eta(t_k)$ is monoconjugate to $\eta(t_{k+1})$ for
every $k$; in other words the geodesic is not locally minimizing even on short time intervals. (Note
that in spite of the vanishing geodesic distance, geodesics are still locally minimizing in the $H^{2}$
topology due to smoothness of the exponential map; they just fail to be globally minimizing.) This
is a slightly simpler version of the phenomenon described in~\cite{Pre2010} for 3D fluid flow, again
illustrating how similar \eqref{main} is to the 3D Euler equation when one looks at them both geometrically.

Hence we expect a complete understanding of blowup in the Wunsch equation to lead to a better understanding
of possible blowup scenarios for the 3D Euler equation. In particular the following open questions should be
addressed:
\begin{itemize}
  \item We know that blowup will occur if $\omega_{0}(x_{0})=0$ and $u_{0}'(x_{0})<0$ for some $x_{0}\in S^{1}$. Does
        it in fact occur if $u_{0}'(x_{0})\ge 0$? If so, it would imply that every nonconstant solution ends in finite
        time, since $\int_{S^{1}} \omega_{0} \, dx = 0$ implies $\omega_{0}$ vanishes at one point.
  \item The work of Khesin-Lenells-Misio{\l}ek~\cite{KLM2008} implies that a nonzero mean term $\mu$ can prevent
        blowup in the Hunter-Saxton equation. So far all of our analysis has assumed that the mean of the initial data
        (and hence of all future data) is zero; can blowup still occur if it is not?
  \item Theorem~\ref{BKMHilbertthm} implies that at the blowup time we have
        $\int_{0}^{T} \norm{\omega(t)}_{L^{\infty}} \, dt = \infty$. On the other hand it appears that blowup
        is localized to a point $x_{0}$ where $\omega_{0}(x_{0})=0$, and hence we do not have
        $\int_{0}^{T} \abs{\omega\left(t,\eta(t,x_{0})\right)} \, dt = \infty$ (the localized Beale-Kato-Majda criterion)
        since momentum is conserved. Is this localized criterion valid for $x$ in a selected neighborhood of $x_{0}$?
  \item The localized Beale-Kato-Majda criterion is precisely what shows up in the condition for an infinite
        sequence of conjugate pairs leading up to the blowup time as in Theorem~\ref{conjugateexistence}. Are there
        such conjugate pairs, as conjectured in~\cite{Pre2010}? Can one see the failure of minimization along the
        corresponding geodesic explicitly?
  \item The Camassa-Holm equation (the Euler-Arnold equation for the $H^{1}$-metric on $\Diff(S^{1})$)
        has solutions which blow up in finite time in the sense that $u_{x}$ approaches
        negative infinity; however one can define global weak solutions, thus ensuring the continuation of the Lagrangian flow beyond the blow-up point;
        see ~\cite{McK2003,BC2007,BCZ2015}. Does the same thing happen for the Wunsch equation? That is, can one extend geodesics in $C^{\infty}(S^{1},S^{1})$
        for all time even if they leave $\Diff(S^{1})$?
  \item It is easy to see that the positive forcing term $F = -uu_{xx}-H(uHu_{xx})$ given by Theorem~\ref{lagrangianform}
        can be bounded above in terms of $\norm{u}_{H^{3/2}}^{2}$. On the other hand it is not clear how
        to obtain a good lower bound, although we know it is positive. A lower bound is necessary to prove blowup
        of other Lagrangian trajectories carrying nonzero momentum.
  \item Finally, it should be remembered that for $s >3/2$, the $H^{s}$-metric and the $\mu H^{s}$-metric on $\Diff(S^{1})$ are geodesically complete (geodesics are defined for all time)~\cite{EK2014a}. The critical exponent $s=3/2$ was studied in~\cite{GBR2015}. It was shown there that the metric is strong and geodesically complete on a manifold which could be thought as a replacement for $\D{3/2}$ (which is not a topological group). However, it is not clear if the metric is geodesically complete on $\Diff^{\infty}(S^{1})$ (local well-posedness in that case was studied in~\cite{EK2014}). It would also be interesting to study the blowup for other values of $s\le 3/2$.
\end{itemize}

\section{Curvature}\label{sec:curvature}

In this part we will show that the sectional curvature for
the $\mu H^{1/2}$--metric admits both signs and is locally unbounded from above.
The unboundedness of the curvature is further evidence that there exists a relation between unbounded sectional curvature and vanishing geodesic distance as conjectured by Michor and Mumford~\cite{MM2005}.

Formulas for the sectional curvature of $\Diff(S^1)/\Rot(S^1)$ were computed by Bowick-Rajeev~\cite{BR1987} and Zumino~\cite{Zum1988} using the formula of Freed~\cite{Fre1988} for K\"{a}hler geometry, by Kirillov-Yur'ev~\cite{KY1987} using complex normal coordinates, and by Gordina-Lescot~\cite{GL2006} directly using the covariant derivative. Here we will use Arnold's curvature formula~\cite{Arn1966} for right invariant metrics on Lie-groups, which allows us to see the effect of the mean term $\mu$.
The sectional curvature $K(u,v)$ for a right-invariant metric $\langle\cdot,\cdot\rangle$ on a Lie group $G$ at the unit element $e$ for the $2$-plane spanned by linearly independent vectors $u,v \in \mathfrak g$ is given by
\begin{equation}\label{eq:curvature}
  K(u,v) =\frac{\langle\mathcal R(u,v)v,u\rangle}{\langle u,u\rangle\langle v,v\rangle-\langle u,v\rangle^{2}}= \frac{\langle\delta,\delta\rangle - 2\langle\alpha,\beta\rangle - 3\langle\alpha,\alpha\rangle - 4\langle B_{u},B_{v}\rangle}{\langle u,u\rangle\langle v,v\rangle-\langle u,v\rangle^{2}}
\end{equation}
where
\begin{equation*}
  2\alpha = \ad_uv, \qquad 2\beta = \ad_{u}^{\top}v - \ad_{v}^{\top}u, \qquad 2\delta = \ad_{u}^{\top}v + \ad_{v}^{\top}u
\end{equation*}
and
\begin{equation*}
  2B_{u} = \ad_{u}^{\top}u, \qquad 2B_{v} = \ad_{v}^{\top}v.
\end{equation*}

It remains to calculate the terms of the curvature formula for
the $\mu H^{1/2}$-metric on $\Diff(S^{1})$. Therefore we recall
the definition of the inertia operator and the adjoint operator:
\begin{equation}
  \Lambda u = \mu(u) + Hu_{x},\qquad \ad_{u}^{\top}v = \Lambda^{-1}\left(2u_{x}\Lambda(v) + u\Lambda(v_{x})\right).
\end{equation}
Plugging this into Arnold's curvature formula yields the general formula for the sectional curvature of the
$\mu H^{1/2}$-metric. However, since the resulting formula does not seem to give any insights we will refrain from doing so and instead
calculate the sectional curvature for the particular choice of vector fields $u,v\in\{ \sin(mx),\cos(nx)\}$. We obtain the following result:

\begin{theorem}\label{thm:curv}
  The sectional curvature of the $\mu H^{1/2}$-metric admits both signs and is locally unbounded from above.
  In particular we have:
  \begin{align}
    K(\sin(mx),\sin(nx)) & =K(\sin(mx),\cos(nx))=K(\cos(mx),\cos(nx))                          \\
                         & =\frac{m \left(m^{2}+2 m n+2 n^{2}\right)}{2 n (m+n)},\qquad n>m>0  \\
    K(\sin(mx),\cos(nx)) & =\frac{n \left(2 m^{2}+2 m n+n^{2}\right)}{2 m (m+n)}, \qquad m>n>0 \\
    K(\sin(mx),\cos(mx)) & =\frac{1}{2}(5m-6), \qquad m>0\,.                                   
  \end{align}
\end{theorem}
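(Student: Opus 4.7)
The plan is to apply Arnold's curvature formula \eqref{eq:curvature} directly, exploiting the fact that the inertia operator $\Lambda$ acts diagonally on the Fourier basis via $\Lambda(e^{ikx}) = (\delta_{0}(k) + \abs{k})e^{ikx}$. Consequently, the finite-dimensional span $\set{1,\sin(kx),\cos(kx)\,:\,k\ge 1}$ is preserved under all operations and every quantity in sight remains a short trigonometric polynomial.

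First I would expand $\ad_{u} v = v u_{x} - u v_{x}$ using the standard product-to-sum identities. For instance, for $u = \sin(mx)$, $v = \sin(nx)$ with $m\ne n$ this gives
\begin{equation*}
  \ad_{u} v \;=\; \tfrac{m-n}{2}\sin((m+n)x) \;-\; \tfrac{m+n}{2}\sin((m-n)x),
\end{equation*}
and analogous short trigonometric expressions hold in the other three cases. The qualitatively different case is $u=\sin(mx)$, $v=\cos(mx)$, where $\ad_{u} v = m$ collapses to a pure constant, thereby activating the mean-value piece of $\Lambda$. Next I would compute $\ad_{u}^{\top} v$ via \eqref{adstar}: the argument $2u_{x}\Lambda v + u\Lambda v_{x}$ is a linear combination of at most four sines and cosines, of frequencies among $\set{0,\abs{m-n},m+n,2m}$, so applying $\Lambda^{-1}$ merely divides each Fourier coefficient by its (nonzero) weight. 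From these one reads off $\alpha$, $\beta$, $\delta$, $B_{u}$, $B_{v}$.

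All inner products then reduce, via Parseval, to a sum over the handful of frequencies that actually appear, and the numerator $\langle\delta,\delta\rangle - 2\langle\alpha,\beta\rangle - 3\langle\alpha,\alpha\rangle - 4\langle B_{u},B_{v}\rangle$ becomes an explicit rational function of $m$ and $n$. Dividing by $\langle u,u\rangle\langle v,v\rangle - \langle u,v\rangle^{2}$, which equals $mn\pi^{2}$ in the distinct-frequency cases and $m^{2}\pi^{2}$ in the equal-frequency case, produces the four stated formulas. The sign change and local unboundedness from above then follow at a glance: the last formula gives $K(\sin x,\cos x) = -\tfrac{1}{2}$, whereas $K(\sin(mx),\cos(mx)) = \tfrac{5m-6}{2} \to +\infty$ as $m\to\infty$.

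The main obstacle is the equal-frequency case $u = \sin(mx)$, $v = \cos(mx)$: the constant produced by $\ad_{u} v = m$ forces the mean-value piece of $\Lambda$ to contribute to $\delta$, $B_{u}$, and $B_{v}$, and this contribution is precisely what generates the anomalous linear-in-$m$ behavior $\tfrac{5m-6}{2}$, in contrast to the bounded values that appear in the off-diagonal cases. Careful bookkeeping of the zero mode alongside the doubled mode $2m$ arising from squares such as $\sin^{2}(mx)$ and $\cos^{2}(mx)$ is the only delicate point, after which the computation is entirely mechanical.
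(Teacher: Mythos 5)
Your proposal is correct and follows essentially the same route as the paper: a direct application of Arnold's curvature formula \eqref{eq:curvature}, computing $\ad_{u}v$ and $\ad_{u}^{\top}v$ explicitly on the trigonometric basis via \eqref{adstar}, evaluating the resulting inner products frequency by frequency, and dividing by $\pi^{2}mn$ (resp.\ $\pi^{2}m^{2}$). You also correctly isolate the one delicate point the paper flags, namely that in the equal-frequency case $\ad_{\sin(mx)}\cos(mx)=m$ is constant, so the $\mu$-term of $\Lambda^{-1}$ must be tracked, which is what produces the linear-in-$m$ formula $\tfrac{1}{2}(5m-6)$ and hence both the negative value at $m=1$ and the unboundedness from above.
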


\begin{proof}
  Let $n,m \ne 0$. Then we have
  \begin{equation*}
    \Lambda(\sin(nx)) = \abs{n} \sin(nx), \qquad \Lambda(\cos(mx)) = \abs{m} \cos(mx)
  \end{equation*}
  and integrating we get
  \begin{equation*}
    \norm{\sin(nx)}_{\mu H^{1/2}}^{2} = \pi \abs{n}, \quad \norm{\cos(mx)}_{\mu H^{1/2}}^{2} = \pi \abs{m}.
  \end{equation*}
  We will also need that $\sin(nx)$ and $\cos(mx)$ are orthogonal with respect to the $\mu H^{1/2}$-metric, i.e.,
  \begin{equation*}
    \llangle\sin(nx), \cos(mx)\rrangle_{\mu H^{1/2}} = \llangle \sin(nx), \sin(mx)\rrangle_{\mu H^{1/2}} = \llangle\cos(nx), \cos(mx)\rrangle_{\mu H^{1/2}} = 0.
  \end{equation*}
  We will only present the remaining calculations for the case $u=\sin(mx)$ and $v=\sin(nx)$ with $n>m>0$.
  For the operator $\ad^{\top}_u v$ we calculate:
  \begin{align*}
    \ad^{\top}_u v & =n \Lambda^{-1}\left( 2m\cos(mx) \sin(nx) +n\sin(mx) \sin(nx)\right)        \\
                   & =n\frac{m-\frac{n}2}{n-m} \sin((m-n)x)+n\frac{m+\frac{n}2}{m+n}\sin((m+n)x) 
  \end{align*}
  To calculate $\Lambda^{-1}$ we used that $H^{-1}=-H$ and that $\sin(k x)$ has zero mean for any $k\in \mathbb N$.
  Thus we obtain the formulas for $\beta, \delta, B_u$ and $B_{v}$:
  \begin{align*}
    2\beta  & = \frac{n^{2}-m^{2}}{2(n+m)}\sin((m+n)x)- \frac{m^{2}-4mn+n^{2}}{2(n-m)} \sin((n-m)x), \\
    2\delta & =\frac{n^{2}-m^{2}}{2(n-m)}\sin((n-m)x)+ \frac{m^{2}+4mn+n^{2}}{2(m+n)} \sin((m+n)x),  \\
    B_u     & = \frac34 m \sin(2mx), \qquad B_v = \frac34 n\sin{2nx}.                                
  \end{align*}
  For the remaining term $\alpha$ we have
  \begin{align}
    \alpha & = -\frac{n-m}2\sin((m+n)x)+\frac{n+m}2\sin((n-m)x), 
  \end{align}
  Now we can calculate the $\mu H^{1/2}$-norms. Since $\sin(kx)$ is an orthogonal system with respect to the $\mu H^{1/2}$-metric we have
  \begin{align*}
    \llangle B_u, B_{v}\rrangle_{\mu H^{1/2}} =0\,. 
  \end{align*}
  For the other terms we arrive at:
  \begin{align*}
    4\llangle \delta,\delta\rrangle & = \frac{\pi}{2(n+m)}\left(n (3 m^{3} + 9 m^{2} n + 5 m n^{2} + n^{3}) \right) \\
    2\llangle \alpha,\beta \rrangle & = -\frac{\pi}{8}(m + n) (m^{2} - 3 m n + n^{2})                               \\
    \llangle \alpha,\alpha \rrangle & = \frac{\pi}{8} (n - m) n (m + n)                                             
  \end{align*}
  Putting everything together we obtain for the sectional curvature:
  \begin{align*}
    K(\sin(mx),\sin(nx))
      & =\frac{\pi m (m^{2} + 2 m n + 2 n^{2})}{4 (m + n) n}\geq \frac{\pi m (m+n)}{4 n} 
  \end{align*}
  which is unbounded for fixed $n$ and $m\rightarrow \infty$.
  Note, that this formula is not symmetric in $u$ and $v$, since we have assumed
  that $n>m$, in order to calculate the Hilbert transform of terms
  as $\sin((n-m)x)$. The calculations for the other terms are similar. Only in the case $u=\cos(mx)$ $v=\sin(mx)$, one has to be more careful since $\Lambda(\ad^\top_uv)$ has non-zero mean, and thus one has to take into account the $\mu$-term for the inversion of $\Lambda$.
\end{proof}

Several questions concerning the sectional curvature remain open for future investigation:
\begin{itemize}
  \item
        We have seen that the sectional curvature of the $\mu H^{1/2}$-metric has unbounded positive curvature. This has also been observed for the $L^{2}$-metric.
        Both of these metrics have vanishing geodesic distance. On the other hand, for metrics of order one --- which induce non-vanishing geodesic distance --- the curvature turns out to be bounded from above.
        It is an open conjecture by Michor and Mumford that there is a relation between unbounded curvature and vanishing geodesic distance.
  \item
        We have seen that the sectional curvature admits both signs and is unbounded from above. It is not known, whether the curvature is bounded from below. In this case it would be interesting to derive explicit lower bounds.
  \item
        The homogeneous $H^{1}$--metric has strictly positive sectional curvature. On the other hand, it has been shown in~\cite{KLM2008} that the sectional curvature of the $\mu H^{1}$--metric admits both signs. Similarly, the negative sectional curvature directions, that are described in Thm.~\ref{thm:curv}, are originating only from the $\mu$-term in the $\mu H^{1/2}$--metric. It remains open whether the sectional curvature of the homogeneous $\dot{H}^{1/2}$-metric admits both signs or is strictly positive.
  \item
        One can repeat the above calculations for the general homogeneous $\dot{H}^{s}$-metric.
        For $0<m<n$ we find that
        \begin{align*}
            & K(\sin(mx),\sin(nx))=\frac{\pi}{4} (nm)^{-2s} \bigg((-m+n)^{-2s} \left(m^{1+2s}-2 m^{2s} n+2 m n^{2s}-n^{1+{2s}}\right)^{2} 
          \\&\qquad+(m+n)^{-{2s}} \left(m^{1+{2s}}+2 m^{2s} n+2 m n^{2s}+n^{1+{2s}}\right)^{2}-4 m^{2s}\left(-m^{2}+2 n^{2}\right)\\&\qquad+4 n^{2s}\left(-2 m^{2}+n^{2}\right)-3 (-m+n)^{2} (m+n)^{2s}-3 (m+n)^{2} (n-m)^{2s}\bigg)\,.
        \end{align*}
        For $s=0$ this gives
        \begin{align*}
            & K(\sin(mx),\sin(nx))=3\pi \left( m^{2}+n^{2} \right)\,, 
        \end{align*}
        which is the sectional curvature of the metric associated to Burgers equation.
        For $s=1$ this gives constant sectional curvature, as computed by Lenells~\cite{Len2007} for the homogeneous $\dot{H}^{1}$-metric, which is associated to the Hunter Saxton equation. Clearly the sectional curvature for $s=0$ is not bounded, while it is bounded for for $s=1$. It remains open to study boundedness of the sectional curvature for general $H^{s}$--metrics.
\end{itemize}

\appendix

\section{The homogeneous $\dot{H}^{1/2}$-metric}
\label{appendix:homogeneous-metric}

The $\dot{H}^{1/2}$ inner product on $\CS$ is defined as
\begin{equation}
  \llangle u, u\rrangle_{\dot{H}^{1/2}} = \int_{S^{1}} u Hu_{x} \, dx = 2\pi \sum_{n\in \mathbb{Z}} \abs{n} \abs{\hat{u}_{n}}^{2} \quad \text{if}\quad u(x) = \sum_{n\in\mathbb{Z}} \hat{u}_{n} e^{inx}.
\end{equation}
This inner product does not induce a right-invariant metric on $\Diff(S^{1})$ because it is \emph{degenerate}. The latter can be avoided if we work on $\Diff(S^{1})/\Rot(S^{1})$, corresponding to the homogeneous space of diffeomorphisms modulo rotations.

More generally, the geodesic equation for a right-invariant Riemannian metric on a homogeneous space $G/K$ can be reduced to the \emph{Euler-Poincar\'{e}} equation
\begin{equation}\label{eq:Euler-Poincare}
  \omega_{t} = \ad^{*}_{u}\,\omega, \qquad \omega \in \mathfrak{g}^{*},
\end{equation}
on the dual space $\mathfrak{g}^{*}$ of the Lie algebra of $G$ (see Poincar\'{e}'s original paper~\cite{Poi1901}).

But unfortunately, in general, there is no well-defined \emph{Euler-Arnold equation} in that case because the Eulerian velocity (defined using a lift $g(t)$ in $G$ of a path $x(t)$ in $G/K$) is only defined \emph{up to a path} in $K$ and the relation between $u$ and $\omega$ is not one-to-one (see~\cite{TV2011} for a discussion on this subject).

These difficulties clear away if $K$ is a normal subgroup. Indeed, in that case, the coset manifold $G/K$ is a Lie group equipped with a right-invariant Riemannian metric. But this special case is not very useful for us, since $\Diff(S^{1})$ is \emph{simple}.

There is another situation where a right-invariant Riemannian metric on a homogeneous space reduces to an Euler-Arnold equation on a Lie group, namely when the right action of $G$ on the set of left cosets $G/K$ becomes \emph{simply transitive} when restricted to a subgroup $G_{0}$ of $G$.

This situation occurs for $\Diff(S^{1})/\Rot(S^{1})$, where $G_{0} := \Diff_{x_{0}}(S^{1})$, the subgroup of diffeomorphisms which fix one point, say $x_{0}$. Its Lie algebra $\mathrm{Vect}_{x_{0}}(S^{1})$ is the space of vector fields on the circle which vanish at $x_{0}$. The inertia operator $H\partial_{x}$ induces an isomorphism between $\mathrm{Vect}_{x_{0}}(S^{1})$ and the subspace of its topological dual defined by linear functionals
\begin{equation*}
  u \mapsto \int_{S^{1}} \omega(x)u(x)\,dx
\end{equation*}
where $\omega$ is smooth and has zero mean: $\int_{S^{1}} \omega(x)\,dx = 0$. We can therefore consider equation~\eqref{main} as an Euler-Arnold equation but on $\Diff_{x_{0}}(S^{1})$ rather than $\Diff(S^{1})$.

Even if $\ad(v)^{\top} u$ is not defined on $\mathrm{Vect}_{x_{0}}(S^{1})$ in that case because $u_{x}Hv_{x}$ does not have vanishing integral in general, the symmetric part of the bilinear operator $\ad(v)^{\top} u$ is however well defined and so is the geodesic spray. The proof of local existence of the geodesics and the fact that the exponential map is a local diffeomorphism follows the same line as in Section~\ref{subsec:local-well-posedness} (see~\cite{EKW2012,EK2014} for the details). Moreover, the proof of Lemma~\ref{indexformlemma} can be adapted in that case, without referring to $\Ad_{\eta}^{\top}$ which is not defined either.

To finish this section we will prove the following lemma.
\begin{lemma}
  The geodesic distance for the homogeneous $\dot{H}^{s}$-metric on $\Diff_{x_{0}}(S^{1})$ vanishes for $0 \le s \le 1/2$.
\end{lemma}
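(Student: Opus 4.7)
The plan is to follow the four-step template used for the $\mu H^{1/2}$-metric on $\Diff(S^1)$: reduce the $\dot{H}^s$-metric to the dominating full $H^s$-metric, show that the short-reachable set is a normal subgroup, produce one non-trivial short-reachable element, and invoke simplicity of $\Diff_{x_0}(S^1)^0$.

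First, on $\Vect_{x_0}(S^1)$ the Fourier comparison
\begin{equation*}
\llangle u,u\rrangle_{\dot H^s} = 2\pi\sum_{n\in\ZZ}\abs{n}^{2s}\abs{\hat u_n}^2 \le 2\pi\sum_{n\in\ZZ}(1+n^2)^s\abs{\hat u_n}^2 = \llangle u,u\rrangle_{H^s}
\end{equation*}
shows that any smooth path has $\dot H^s$-length bounded by its $H^s$-length, so it suffices to exhibit arbitrarily short $H^s$-paths between any two elements of $\Diff_{x_0}(S^1)^0$. Setting $G_0 := \{\varphi\in\Diff_{x_0}(S^1)^0 : \operatorname{dist}^{H^s}(\id,\varphi)=0\}$, I would verify that $G_0$ is a normal subgroup by copying the conjugation estimate from the main theorem: for $\psi\in\Diff_{x_0}(S^1)$ the conjugate path $\psi\circ\varphi(t)\circ\psi^{-1}$ has Eulerian velocity $(\psi_x\circ\psi^{-1})\cdot(u\circ\psi^{-1})$, which lies in $\Vect_{x_0}(S^1)$ because $\psi$ fixes $x_0$, and whose $H^s$-norm is bounded by $C(\psi)\,\norm{u}_{H^s}$ for $s\le 1/2$.

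The main obstacle is producing a non-trivial element of $G_0$, since the shift $x\mapsto x+1$ used in the $\Diff(S^1)$ proof does not preserve $x_0$. I would localize the BBHM construction inside an arc $J\subset S^1\setminus\{x_0\}$ identified with a bounded interval of $\RR$. By the results discussed in Appendix~\ref{appendix:real_line} together with~\cite{Tri2001}, there exist $f_N\in C_c^\infty(J)$ with $\norm{f_N}^2_{H^{1/2}}\le C/N$ and $\norm{f_N}_{L^\infty}\ge 1$; by interpolation with $L^2$-estimates the same construction works for every $0\le s<1/2$. Setting $u_N(t,x)=\lambda f_N(t-x)$ produces vector fields supported in $J$, hence vanishing at $x_0$, whose flow converges at some uniformly bounded time $T_{\mathrm{end}}$ to a non-trivial compactly-supported diffeomorphism $\varphi_1\neq\id$ of $J$, while the total $H^s$-energy tends to zero as $N\to\infty$. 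This is the step I expect to require the most care, especially to show that $T_{\mathrm{end}}$ stays bounded and $\varphi_1\ne\id$; the endpoint analysis from~\cite{BBHM2013} adapted to an interval should carry through.

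Finally, the connected component $\Diff_{x_0}(S^1)^0$ is a simple group, being isomorphic to the identity component of the group of smooth diffeomorphisms of an interval fixing its endpoints (with all derivatives matching at the identified endpoint), which follows from classical simplicity results for compactly-supported diffeomorphisms of the line. Combined with normality and the non-triviality above, this forces $G_0 = \Diff_{x_0}(S^1)^0$, and the $\dot H^s$-geodesic distance on $\Diff_{x_0}(S^1)$ vanishes, concluding the proof.
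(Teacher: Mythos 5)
Your proposal has two genuine gaps, both of which the paper's actual proof is specifically designed to avoid.

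First, the localization step does not work as written. The vector field $u_{N}(t,x)=\lambda f_{N}(t-x)$ is a traveling wave: if $f_{N}$ is supported in an arc $J$, then $u_{N}(t,\cdot)$ is supported in the translated arc $\{x: t-x\in \operatorname{supp} f_{N}\}$, which sweeps around the circle as $t$ varies and in particular does not vanish at $x_{0}$ for all $t$. The traveling-wave structure is essential to the BBHM endpoint analysis (it is what makes the flow converge to a shift), so you cannot simply restrict the construction to $S^{1}\setminus\{x_{0}\}$. A direct, localized construction of short paths (using indicator functions) is exactly what~\cite{BBHM2013} provide for $s<\tfrac12$, and the paper explicitly records that extending that direct construction to the critical case $s=\tfrac12$ is an \emph{open problem}; your step 3 would require solving it. Second, $\Diff_{x_{0}}(S^{1})$ is \emph{not} simple: since every element fixes $x_{0}$, the map $\varphi\mapsto\varphi'(x_{0})$ is a group homomorphism onto $(\RR_{>0},\cdot)$ (because $(\varphi\circ\psi)'(x_{0})=\varphi'(x_{0})\psi'(x_{0})$), and its kernel is a proper nontrivial normal subgroup. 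Worse, the element $\varphi_{1}$ you construct is supported away from $x_{0}$, and its normal closure consists of diffeomorphisms that are the identity near $x_{0}$, so even granting steps 1--3 the group-theoretic step 4 cannot reach all of $\Diff_{x_{0}}(S^{1})$.

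The paper's proof takes a different and much shorter route that bypasses both issues: it uses the known vanishing of the $H^{s}$-geodesic distance on the \emph{full} group $\Diff(S^{1})$ for $s\le\tfrac12$, and then projects an arbitrary short path $\eta(t)$ in $\Diff(S^{1})$ to the path $\tilde{\eta}(t,x)=\eta(t,x)-\eta(t,0)$ in $\Diff_{x_{0}}(S^{1})$. The Eulerian velocity of $\tilde{\eta}$ differs from that of $\eta$ only by a translation of the argument and the subtraction of a time-dependent constant, and the homogeneous seminorm $\norm{\cdot}_{\dot{\Lambda}^{2s}}$ is invariant under both operations, so $L_{\dot{\Lambda}^{2s}}(\tilde{\eta})=L_{\dot{\Lambda}^{2s}}(\eta)\le L_{\Lambda^{2s}}(\eta)$. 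This makes the infimum over paths in the subgroup equal to the infimum over paths in the whole group, and no normality, simplicity, or localized construction is needed. If you want to salvage your approach, you would at minimum have to replace simplicity by an explicit approximation argument and find a genuinely local short-path construction at $s=\tfrac12$; the projection trick is the efficient way around both.
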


\begin{proof}
  The Fr\'{e}chet manifold $\Diff_{x_{0}}(S^{1})$ can be covered by a single chart, namely, the set of smooth, real $1$-periodic functions $\eta$ on $\RR$ such that $\eta(0) = 0$. Let $\eta_{0}$ and $\eta_{1}$ be two diffeomorphisms in $\Diff_{x_{0}}(S^{1})$. Given any path $\eta(t)$ in $\Diff(S^{1})$ with $\eta(0) = \eta_{0}$ and $\eta(1) = \eta_{1}$, we introduce
  \begin{equation*}
    L_{A}(\eta) = \int_{0}^{1} \norm{\partial_{t}\eta \circ \eta^{-1}}_{A} \, dt ,
  \end{equation*}
  where
  \begin{equation*}
    \norm{u}_{A}^{2} := \int_{S^{1}} (Au)u \, dx .
  \end{equation*}
  Let $\Lambda^{2s} := (1 + \abs{D}^{2})^{s}$ and $\dot{\Lambda}^{2s} := \abs{D}^{2s}$; see formula \eqref{fouriermultiplier}. We have $\norm{u}_{\dot{\Lambda}^{2s}} \le \norm{u}_{\Lambda^{2s}}$, for all $u \in \CS$ and thus
  \begin{equation}\label{eq:length}
    L_{\dot{\Lambda}^{2s}}(\eta) \le L_{\Lambda^{2s}}(\eta).
  \end{equation}
  However, to conclude that the geodesic distance $\mathrm{dist}^{\dot{H}^{s}}(\eta_{0},\eta_{1})$ for the $\dot{H}^{s}$-metric on $\Diff_{x_{0}}(S^{1})$ is bounded above by $\mathrm{dist}^{H^{s}}(\eta_{0},\eta_{1})$, the geodesic distance for the $H^{s}$-metric on $\Diff(S^{1})$, we need to show that the infimum of $L_{\dot{\Lambda}^{2s}}(\eta)$ on all paths $\eta$ in $\Diff(S^{1})$ joining $\eta_{0}$ and $\eta_{1}$ is the same as the infimum on paths $\eta$ in $\Diff_{x_{0}}(S^{1})$. To do so, given a path $\eta$ in $\Diff(S^{1})$, we introduce
  \begin{equation*}
    \tilde{\eta}(t,x) := \eta(t,x) - \eta(t,0)
  \end{equation*}
  which is a path in $\Diff_{x_{0}}(S^{1})$ and we check that
  \begin{equation*}
    \norm{\partial_{t}\tilde{\eta} \circ \tilde{\eta}^{-1}}_{\dot{\Lambda}^{2s}} = \norm{\partial_{t}\eta \circ \eta^{-1}}_{\dot{\Lambda}^{2s}}.
  \end{equation*}
  The conclusion follows since $\mathrm{dist}^{H^{s}}(\eta_{0},\eta_{1})$ vanishes for $0 \le s \le 1/2$.
\end{proof}

\section{The diffeomorphism group on the real line}
\label{appendix:real_line}

Shifting our focus from the diffeomorphism group of the compact manifold $S^{1}$ to the diffeomorphism group of the non-compact manifold $\RR$ requires us to specify certain decay conditions for the diffeomorphisms under consideration. The reason for this is that
the group of all orientation-preserving diffeomorphisms $\Diff(\RR)$ is not an open subset of $C^{\infty}(\RR,\RR)$ endowed with the compact $C^{\infty}$-topology. Thus it is not a smooth manifold with charts in the usual sense.
A method to overcome this difficulty is to restrict ourselves to subgroups of the whole diffeomorphism group, which are still smooth Fr\'{e}chet manifolds. This approach leads among others to the group of compactly supported diffeomorphisms $\Diff_{c}(\RR)$, the group of rapidly decaying diffeomorphisms $\Diff_{\mathcal S}(\RR)$, or the group of Sobolev diffeomorphisms $\Diff_{H^{\infty}}(\RR) = \cap_{k=2}^{\infty} \Diff_{H^k}(\RR)$:
\begin{align*}
  \Diff_{c}(\RR):          & =\left\{\varphi(x) = \id+f: f\in C_{c}^{\infty}(\RR),\, f'>-1 \right\}, \\
  \Diff_{\mathcal S}(\RR): & =\left\{\varphi(x) = \id+f: f\in \mathcal S(\RR),\, f'>-1 \right\},     \\
  \Diff_{H^{\infty}}(\RR): & =\left\{\varphi(x) = \id+f: f\in H^{\infty}(\RR),\, f'>-1 \right\}.     
\end{align*}
Note that for $\Diff(S^{1})$ all the above groups coincide.

\subsection{The full $H^{1/2}$-metric}

We will first focus on the full $H^{1/2}$-metric, which is geometrically the most straightforward of the three metrics
studied in this article. The $H^{1/2}$-scalar product yields a
well-defined metric on any of the above defined diffeomorphism groups. For analytic reasons we will consider the metric on the
group of all Sobolev diffeomorphisms. Analogous to Section~\ref{subsec:local-well-posedness}, let $\mathcal{D}^{q}(\RR)$ be the Hilbert approximation manifold of $\Diff_{H^{\infty}}(\RR)$. It has been shown in~\cite{BEK2015} that the $H^{1/2}$-metric extends to
a smooth Riemannian metric on $\mathcal{D}^{q}(\RR)$ for any $q>\frac32$.
Thus we obtain the analogue of the local well-posedness statement Theorem~\ref{thm:local-wellposedness} for the full $H^{1/2}$-metric
on both $\Diff_{H^{\infty}}(\RR)$ and $\mathcal{D}^{q}(\RR)$:
\begin{theorem}
  For any $q>\frac32$ the exponential map of the $H^{1/2}$-metric is a smooth local diffeomorphism from a neighborhood $V\subset H^{q}(\RR)$ of $0$ onto a neighborhood $U$ of the identity in $\D{q}(\RR)$.
  This result continues to hold in the smooth category $\Diff_{H^{\infty}}(\RR)$.
\end{theorem}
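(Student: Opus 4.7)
The plan is to reduce to the machinery already developed in Section~\ref{subsec:local-well-posedness} for the compact case. The $H^{1/2}$-metric on $\RR$ corresponds to the inertia operator
\begin{equation*}
  A := \op{1 + \abs{\xi}} = \id + H\partial_x,
\end{equation*}
which is elliptic of order one and extends to a topological linear isomorphism $H^q(\RR) \to H^{q-1}(\RR)$ for every $q \ge 1$. Hence for any $q > 3/2$ the ellipticity condition of Section~\ref{subsec:local-well-posedness} is met with $r = 1$.

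Following the two equivalent reductions given there (second-order spray or Ebin's first-order reformulation), local well-posedness of the geodesic equation on $\mathcal{D}^q(\RR)$ reduces to the smoothness of the conjugated-operator map $\eta \mapsto A_\eta := TR_\eta \circ A \circ TR_{\eta^{-1}}$ from $\mathcal{D}^q(\RR)$ into $\mathcal{L}(H^q(\RR), H^{q-1}(\RR))$. This is exactly the content of~\cite{BEK2015}, the non-compact counterpart of~\cite{EK2014}. Granting that result, the standard Cauchy-Lipschitz theorem for smooth vector fields on Hilbert manifolds produces a smooth geodesic spray; the exponential map is then smooth in a neighborhood of $0$ with $(d\exp_{\id})_0 = \id$, and the inverse function theorem on Hilbert manifolds upgrades this to a local diffeomorphism from a neighborhood $V \subset H^q(\RR)$ of the origin onto a neighborhood $U$ of the identity in $\mathcal{D}^q(\RR)$.

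To pass to the smooth category $\Diff_{H^\infty}(\RR) = \bigcap_{q > 3/2}\mathcal{D}^q(\RR)$, I would invoke the no-loss-no-gain principle of Ebin-Marsden: since the ellipticity and smoothness of the map $\eta \mapsto A_\eta$ hold simultaneously on $\mathcal{D}^{q+k}(\RR)$ for every $k \ge 0$, a geodesic with initial velocity $u_0 \in H^{q+k}(\RR)$ constructed in $\mathcal{D}^q(\RR)$ actually lives in $\mathcal{D}^{q+k}(\RR)$ by uniqueness of solutions in the weaker topology. Taking intersections shows that the geodesic stays in $\Diff_{H^\infty}(\RR)$, and the inverse function theorem statement passes to the Fr\'{e}chet projective limit in the standard way.

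The main obstacle is not in the present theorem itself but in the analytic input from~\cite{BEK2015}: the symbol $1 + \abs{\xi}$ is non-smooth at $\xi = 0$, and on the non-compact line there is no natural finite-dimensional splitting into constants and mean-zero fields with which to handle the low-frequency piece, as there was in the periodic setting. The decay built into the definition of $\Diff_{H^\infty}(\RR)$ serves as a substitute, and the technical work consists in showing that conjugation by a diffeomorphism of the form $\id + f$ with $f \in H^q(\RR)$ preserves a suitable class of pseudodifferential symbols up to smoothing remainders, so that the operator-valued map $\eta \mapsto A_\eta$ is indeed smooth between the relevant Hilbert spaces.
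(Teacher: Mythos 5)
Your proposal follows essentially the same route as the paper: the paper proves this theorem by citing~\cite{BEK2015} for the smooth extension of the $H^{1/2}$-metric (equivalently, of $\eta\mapsto A_{\eta}$ with $A=\op{1+\abs{\xi}}$) to the Hilbert approximations $\mathcal{D}^{q}(\RR)$, and then concludes exactly as in Theorem~\ref{thm:local-wellposedness} via the spray/inverse-function-theorem argument of Section~\ref{subsec:local-well-posedness}, with the no-loss-no-gain principle handling the smooth category. Your write-up just makes these implicit steps explicit, and correctly locates the real analytic work (the non-smooth symbol at $\xi=0$ and conjugation estimates on the line) inside~\cite{BEK2015}.
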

For integer order metrics, one can show a similar statement on the group of compactly supported diffeomorphisms, using groups of $C^k$-diffeomorphisms as approximation spaces. We are
not aware of any generalization of this result to the case of fractional-order metrics.

As a next step we want to discuss the corresponding geodesic distance. There are, however, two obstacles against generalizing the proof for $\Diff(S^{1})$ presented in Section~\ref{sec:geodesic_distance}.
First, the group of Sobolev diffeomorphisms $\Diff_{H^{\infty}}(\RR)$ is not a simple group, e.g., $\Diff_{c}(\RR)$
and $\Diff_{\mathcal S}(\RR)$ are normal subgroups. Furthermore the shift $\varphi(x)=x+1$ is not an element of the group. For the $H^{s}$-metric of order $s<\frac12$ a direct construction for the vanishing geodesic distance result --
using neither the simplicity nor the shift diffeomorphism --- has been proven in~\cite{BBHM2013}. This construction uses indicator function, which are elements of $H^{s}(\RR)$ for $s<\frac12$, but not of $H^{1/2}$. Although we strongly believe that the result continues to hold for the $H^{1/2}$-metric on the diffeomorphism group of the real line, this question remains open for the time being.

Finally we want to comment on the non-Fredholmness results, presented in Section~\ref{sec:Fredholmness}. The constant vector field $u=1$ is not an element of the Lie-algebra in the non-compact situation. Thus we do not have the simple steady state
solutions that were the main ingredient for the direct proof of the non-Fredholmness. On the other hand, the construction of conjugate points along arbitrary geodesic presented in Section~\ref{sec:conjugate_{a}rbitrary} continues to hold in this situation, and thus we obtain the non-Fredholmness of the exponential mapping also in the non-periodic case.

\subsection{The degenerate $\dot{H}^{1/2}$-metric and the $\mu H^{1/2}$-metric.}

We will finish this article with some comments on the degenerate $\dot{H}^{1/2}$-metric and the $\mu H^{1/2}$-metric.
To prove vanishing geodesic distance for these two metrics, one has to overcome similar problems as for the
full $H^{1/2}$-metric on $\Diff(\RR)$ and thus we will not further consider this question. Instead we will sketch how one could prove a well-posedness statement for the geodesic equation of these two metrics.

The $\mu H^{1/2}$-metric does not induce a bounded inner product on ${H^{\infty}}(\RR)$ and its Sobolev approximations $H^{q}(\RR)$.
Instead of these $L^{2}$-type decay conditions, we have to consider $L^{1}$-type decay condition, yielding the Lie-group
$\Diff_{W^{\infty,1}}(\RR)$ and its Banach approximations $\mathcal{D}^{q,1}(\RR)$.
Here $W^{\infty,1}$ denotes the intersection of all Besov spaces $W^{k,1}$ of type $L^{1}$.
For a proof of the Lie-group structure of $\Diff_{W^{\infty,1}}(\RR)$ we refer to the article~\cite{BBM2014}.
On $\Diff_{W^{\infty,1}}(\RR)$ the $\mu H^{1/2}$-metric induces a smooth right-invariant metric. To prove smoothness of the corresponding exponential mapping it remains to prove that the metric extends to a smooth metric on its Banach approximations $\mathcal{D}^{q,1}(\RR)$ for high enough $q$. This should follow using similar methods as in~\cite{EK2014,BEK2015}.

The situation for the degenerate $\dot{H}^{1/2}$-metric is even more interesting. This inner product defines a well-defined and non-degenerate metric for any of the previously diffeomorphism groups. However it turns out that neither the adjoint $\ad^\top_u v$ nor its symmetrized version exist on any of these groups. As a consequence these groups do not admit a geodesic equation (or Levi-Civita covariant derivative) for these metrics. This phenomenon was first observed for the related situation of the $\dot{H}^{1}$-metric in the article~\cite{BBM2014}.

Extending these groups by a one-dimensional space (to include diffeomorphisms that are a shift near positive infinity) yields groups
where the symmetrized version of the adjoint $\frac12(\ad^\top_u v+\ad^\top_{v} u)$ is defined. Thus we obtain a meaningful geodesic equation. To prove smoothness of the corresponding exponential mapping it remains again
to prove that the metric extends to a smooth metric on the corresponding Banach approximation spaces, which follows similarily as in~\cite{EK2014,BEK2015}.

Finally we consider blowup results for the $\dot{H}^{1/2}$-metric (equivalently, for the $\mu H^{1/2}$-metric with initial
mean zero). This was the situation originally studied in Castro-C\'{o}rdoba~\cite{CC2010}, where blowup was proved for odd
initial data. Here we extend this result by adapting Lemma~\ref{signlemma}, which works essentially the same way as in
the periodic case and avoids the machinery of Mellin transforms.

\begin{lemma}\label{signlemmaline}
  Suppose $f\colon \RR\to \RR$ is a nonconstant function, and let $\Lambda$ be the Fourier multiplier defined so that $\widehat{\Lambda f}(\xi) = \abs{\xi} \hat{f}(\xi)$.
  Define $g_{p} = H(fH\Lambda^{p}f) + f\Lambda^{p}f$;
  then $g_{p}(x)>0$ for every $x\in \RR$.
\end{lemma}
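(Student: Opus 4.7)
The plan is to mirror the proof of Theorem~\ref{signlemma}, replacing Fourier series by Fourier transforms and summation by parts by integration by parts. By translation invariance of $H$ and $\Lambda$ (both are Fourier multipliers with constant symbols), it suffices to prove $g_p(0) > 0$; any $x_0$ can be reduced to $0$ by replacing $f(\cdot)$ by $f(\cdot + x_0)$, which does not change $|\hat{f}|$.

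First I would compute $\widehat{H\Lambda^p f}(\xi) = -i\sgn(\xi)|\xi|^p \hat f(\xi)$, then use the convolution formula for $\widehat{fH\Lambda^p f}$, apply $H$ once more, and invert at $x=0$ to obtain, up to a normalization constant,
\begin{equation*}
  g_p(0) = \iint \bigl[1 - \sgn(\eta)\sgn(\xi+\eta)\bigr] |\eta|^{p} \hat f(\xi)\hat f(\eta)\, d\xi\, d\eta.
\end{equation*}
The bracket vanishes whenever $\eta$ and $\xi+\eta$ share a sign and equals $2$ otherwise. Splitting according to the sign of $\eta$, changing $\eta \to -\eta$ in the negative region, and using $\hat f(-\eta) = \overline{\hat f(\eta)}$ (since $f$ is real) consolidates the contribution into
\begin{equation*}
  g_p(0) = 2 \int_{0}^{\infty} \eta^{p} \Bigl[\hat f(\eta)\,\overline{\Phi(\eta)} + \overline{\hat f(\eta)}\,\Phi(\eta)\Bigr]\, d\eta,
  \qquad \Phi(\eta) := \int_{\eta}^{\infty} \hat f(\xi)\, d\xi.
\end{equation*}

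The key observation is that $\Phi'(\eta) = -\hat f(\eta)$, so the bracketed expression is precisely $-\frac{d}{d\eta}|\Phi(\eta)|^2$. Integration by parts then yields
\begin{equation*}
  g_p(0) = -2 \bigl[\eta^{p} |\Phi(\eta)|^2\bigr]_{0}^{\infty} + 2p \int_{0}^{\infty} \eta^{p-1} |\Phi(\eta)|^2\, d\eta.
\end{equation*}
For Schwartz $f$ the boundary terms vanish (at $\infty$ since $\Phi$ decays, at $0$ since $p>0$ and $\Phi(0)$ is finite), leaving the manifestly nonnegative expression $g_p(0) = 2p\int_{0}^{\infty} \eta^{p-1} |\Phi(\eta)|^2\, d\eta$. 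Strict positivity follows because $f$ nonconstant forces $\hat f$ to be non-trivial away from $\{0\}$, hence $\Phi$ cannot vanish identically on $(0,\infty)$.

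The main obstacle is justifying the Fourier-side manipulations (Fubini on the double integral with the $|\eta|^p$ weight, and the vanishing of the boundary term at $\eta = 0$) for the function class under consideration. For Schwartz $f$ this is automatic; the general case should then follow by an approximation argument in whichever function space is appropriate for the subsequent application (e.g., $H^{(p+1)/2}$ or similar), using that $g_p$ depends continuously on $f$ in the relevant topology. This replaces the discrete telescoping identity $\sum_k[k^p - (k-1)^p]|\phi_k|^2$ of the periodic case with its continuous analogue arising from the integration-by-parts weight $p\eta^{p-1}$.
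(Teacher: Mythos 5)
Your proposal is correct and follows essentially the same route as the paper: reduce to $x=0$ by translation invariance, express $g_p(0)$ as the double integral $\iint[1-\sgn(\psi)\sgn(\xi+\psi)]\abs{\psi}^{p}\hat f(\xi)\hat f(\psi)\,d\psi\,d\xi$, and rewrite it via $\Phi(\zeta)=\int_{\zeta}^{\infty}\hat f(\xi)\,d\xi$ as $2p\int_{0}^{\infty}\zeta^{p-1}\abs{\Phi(\zeta)}^{2}\,d\zeta$, which is exactly the paper's final identity. The only difference is that you spell out the integration-by-parts step (the continuous analogue of the paper's summation by parts) and flag the approximation issue for non-Schwartz $f$, both of which the paper leaves implicit.
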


\begin{proof}
  Again it is sufficient to prove this at $x=0$, by translation invariance. Define the Fourier transform $\mathcal{F}$ as
  \begin{equation*}
    \mathcal{F}(f)(\xi) = \hat{f}(\xi) := \int_{\RR} f(x) e^{-2\pi ix\xi}\,dx
  \end{equation*}
  so that $\mathcal{F}^{-1}(f)(\xi) = \mathcal{F}(f)(-\xi)$ and note that $\hat{f}(-\xi) = \overline{\hat{f}(\xi)}$ since $f$ is real-valued. The same computation as in Theorem~\ref{signlemma} (which also appears in~\cite{CC2010}) gives
  \begin{equation*}
    H(fH\Lambda^{p}f)(0) = - \int_{-\infty}^{\infty} \int_{-\infty}^{\infty} \sgn{(\xi+\psi)} \sgn{(\psi)} \abs{\psi}^{p} \hat{f}(\xi) \hat{f}(\psi) \, d\psi\, d\xi.
  \end{equation*}
  We therefore have
  \begin{equation*}
    g_{p}(0) = \int_{-\infty}^{\infty} \int_{-\infty}^{\infty} \Big[ 1 - \sgn{(\xi+\psi)} \sgn{(\psi)}\Big] \abs{\psi}^{p} \hat{f}(\xi) \hat{f}(\psi)\,d\psi\,d\xi,
  \end{equation*}
  which may be rewritten, using the same techniques as in Theorem~\ref{signlemma}, as
  \begin{equation*}
    g_{p}(0) = 2\int_{0}^{\infty} p\zeta^{p-1} \abs{\phi(\zeta)}^{2}\,d\zeta, \quad \text{where} \quad \phi(\zeta) = \int_{\zeta}^{\infty} \hat{f}(\xi)\,d\xi .
  \end{equation*}
\end{proof}


\end{document}